\newcommand{\la}{\langle}
\newcommand{\ra}{\rangle}
\newcommand{\om}{\omega}
\newcommand{\mc}[1]{\mathcal{#1}}
\newcommand{\mf}[1]{\mathfrak{#1}}
\newcommand{\mb}[1]{\mathbb{#1}}
\newcommand{\mr}[1]{\mathrm{#1}}
\newcommand{\ms}[1]{\mathsf{#1}}
\newcommand{\vek}[1]{\mathbf{#1}}
\newcommand{\bs}[1]{\boldsymbol{#1}}
\newcommand{\ovl}[1]{\overline{#1}}
\newcommand{\tr}[1]{\mathrm{tr}[#1]}
\newcommand{\sis}[2]{\la #1|#2\ra}
\newcommand{\Sis}[2]{\big\la #1\big| #2\big\ra}
\newcommand{\hil}{\mc H}
\newcommand{\kil}{\mc K}
\newcommand{\lh}{\mc{L(H)}}
\newcommand{\lk}{\mc{L(K)}}
\newcommand{\f}{\varphi}
\newcommand{\tj}{\vartheta}
\newcommand{\Om}{\Omega}
\renewcommand{\a}{\alpha} 
\newcommand{\m}{\sigma} 
\newtheorem{ma}{Definition}
\newtheorem{lemma}{Lemma}
\newtheorem{prop}{Proposition}
\newtheorem{theor}{Theorem}
\newtheorem{cor}{Corollary}
\newtheorem{rem}{Remark}
\newtheorem{ex}{Example}
\begin{document}

\title{Covariant KSGNS construction\\ and quantum instruments}

\author{Erkka Haapasalo}
\address{Department of Physics and Astronomy, University of Turku, FI-20014 Turku, Finland}  \email{ethaap@utu.fi}

\author{Juha-Pekka Pellonp\"a\"a}
\address{Department of Physics and Astronomy, University of Turku, FI-20014 Turku, Finland}  \email{juhpello@utu.fi}

\begin{abstract}
We study positive kernels on $X\times X$, where $X$ is a set equipped with an action of a group, and taking values in the set of $\mc A$-sesquilinear forms on a (not necessarily Hilbert) module over a $C^*$-algebra $\mc A$. These maps are assumed to be covariant with respect to the group action on $X$ and a representation of the group in the set of invertible ($\mc A$-linear) module maps. We find necessary and sufficient conditions for extremality of such kernels in certain convex subsets of positive covariant kernels. Our focus is mainly on a particular example of these kernels: a completely positive (CP) covariant map for which we obtain a covariant minimal dilation (or KSGNS construction). We determine the extreme points of the set of normalized covariant CP maps and, as a special case, study covariant quantum observables and instruments whose value space is a transitive space of a unimodular type-I group. As an example, we discuss the case of instruments that are covariant with respect to a square-integrable representation.
\end{abstract}

\maketitle

\section{introduction}
\label{sec:intro}

In standard quantum mechanics, instruments are the appropriate mathematical representatives of quantum measurement processes \cite{davies, kraus, Pe13a, Pe13b}. An instrument is described as a map $\Gamma:\mc L(\mc K)\times\Sigma\to\mc L(\hil)$ that takes pairs of outcome sets of the measurement, with the outcome space modelled by a measurable space $(\Om,\Sigma)$, $\Sigma\subset\mc P(\Om)$, and bounded operators on the output Hilbert space $\mc K$ of the measurement into bounded operators on the input Hilbert space $\hil$ such that as a set function the instrument is weakly $\sigma$-additive and $\Gamma(\cdot,X)$ is completely positive (CP) and normal ($\sigma$-weakly continuous) for any $X\in\Sigma$. In the pre-dual picture, the instrument maps the input quantum state $\rho$ (trace-1 positive operator on $\hil$) into a sub-normalized output state $\tilde\Gamma(\rho,X)=[\Gamma(\cdot,X)]_*(\rho)$ on the output quantum system modelled by $\mc K$. This output state is interpreted as the conditional transformed state after the measurement conditioned by registering an outcome in $X$. Moreover, the probability of detecting a measurement outcome in $X$ when the pre-measurement state of the system was $\rho$ is $\tr{\tilde\Gamma(\rho,X)}$. The instrument $\Gamma$ contains a quantum observable $\ms M_\Gamma$, i.e.,\ a positive operator measure and a channel $\mc E_\Gamma$ as its marginals,
$$
\ms M_\Gamma=\Gamma(I_{\mc K},\cdot),\qquad \mc E_\Gamma=\Gamma(\cdot,\Om).
$$
Here, $\ms M_\Gamma$ is the actual physical quantity to be measured on the system and $\mc E_\Gamma$ corresponds to the unconditioned over-all state transformation induced by the measurement. Thus quantum observables, channels, and instruments have a central role in the quantum theory of measurement \cite{BuLaMi} and CP maps in general are involved in many fields of mathematics and physics \cite{bhatskeide2000, heo, kraus}.

In extensions of quantum theory, particularly in quantum field theories, generalizations of these CP maps are needed. This is why, in this paper, we study positive definite kernels and, as a special example of them, CP sesquilinear-form-valued maps on $C^*$-algebras operating in modules over $C^*$-algebras. The study of kernels provides a platform for a unified treatment for generalized quantum measurement devices and positive kernels studied earlier, e.g.,\ in \cite{partha}. This approach is based on our earlier work on positive kernels \cite{KSGNS,pe13} largely inspired by Murphy's paper \cite{Murphy} and also by \cite{partha} which contains many applications (e.g.\ a construction of Bose-Einstein fields by using positive kernels).

The structures of measurement devices such as instruments and observables are convex in any probabilistic physical theory, particularly in quantum theory. Extremality in a convex set can often be seen as an optimality property; extreme observables and instruments contain no noise and ambiguity from combining different measurement schemes. Another important theme in physics is symmetry \cite{davies, varadarajan}: quantum measurement devices usually exhibit natural symmetries of the measurement outcome space such as phase space translation symmetries in position and momentum measurements. In this paper, we concentrate on convex sets of positive definite kernels and CP maps which are covariant under actions of a symmetry group on the inputs and outputs of the maps. Particularly, we study the dilations (Kolmogorov decompositions) and structure of these covariant maps and use these results to characterize the extreme points of convex sets of covariant kernels and CP maps. Results dealing with covariant CP maps have been published earlier (see, e.g.,\ \cite{BaBhLiSk2004, costache, costache2013, davies, heo, joita2011, varadarajan}) but our research on extremality, generalizing the characterization of extreme CP maps (between $C^*$-algebras) presented in \cite{arveson}, seems to be novel.

The study of covariant quantum instruments has arisen from Holevo's article \cite{holevo98} where some questions were left open. As we will see, the characterization of covariant instruments reduces to (generalized) imprimitivity systems which have been studied extensively and used in quantum physics as a fundamental tool to describe symmetric quantum observables \cite{mackey, varadarajan}. In physics, most symmetry groups are unimodular and of type I. Such groups have enough structure (e.g.\ unitary Fourier-Plancherel operators) to completely characterize covariant observables \cite{holevotypeI}. We will generalize results of papers \cite{carmeli, holevotypeI, holevopell} for covariant observables and also extend this study for quantum instruments. Particularly, the results obtained in \cite{cattaneo} linking covariant positive operator measures to canonical systems of imprimitivity are special cases of  theorems in Sections \ref{CPmaps} and \ref{quantumCP}.

The paper is arranged as follows: In Section \ref{def}, we give general definitions and basic results concerning modules over $C^*$-algebras and Hilbert modules. In Section \ref{kernels}, the covariant positive kernels are defined and their minimal covariant Kolmogorov decompositions are studied (Theorem \ref{Kdilat} and Corollary \ref{WKdilat}). The extremality conditions for normalized covariant positive kernels are given in Theorem \ref{Kcovarext}. We introduce covariant CP maps as an example of covariant positive kernels in Section \ref{CPmaps} and give results on their dilation theory and extremality properties in Theorems \ref{CPdilat} and \ref{Scovarext}. In all these findings, we show that, assuming the multiplying algebra of the module to be a $W^*$-algebra instead of being merely a $C^*$-algebra, especially the extremality conditions become simpler. Moreover, in the $W^*$-case, the dilation modules of covariant kernels and CP maps can be assumed to be self-dual. In Section \ref{marginal}, we define the concept of a subminimal dilation which is useful in the study of covariant quantum instruments. The relevant generalized covariant quantum CP maps are presented in Section \ref{quantumCP}, and their structure is characterized in Propositions \ref{lemmmm}, \ref{propo2}, \ref{paapropo}, \ref{prop:Ckiet}, and \ref{prop:lcsc}. In the case of a locally compact second countable symmetry group and a transitive value space, this result can be expressed by means of canonical systems of imprimitivity, giving a version of the imprimitivity theorem for generalized quantum instruments. A similar result for ordinary quantum instruments has been proven in \cite{carmeli2}. After this, we go on to examine quantum observables and instruments that are covariant with respect to a unimodular type-I group (Section \ref{covariantobs}) and apply our findings to the case of a square-integrable representation. Especially, a representation of Kraus type for a covariant instrument, conjectured in \cite{holevo98}, is shown to exist in Theorem \ref{covinstr}.

\section{Notations and definitions}\label{def}

Throughout this paper, we follow the convention $\mb N=\{1,\,2,\,3,\ldots\}$ and assume that the scalar field of any vector space is $\mb C$, the complex numbers. Let $\mb T=\{c\in\mb C\,|\,|c|=1\}$ be the circle group. Moreover, we let $\|\,\cdot\,\|_{N}$, or briefly $\|\,\cdot\,\|$, denote the norm of any normed (vector) space $N$ and let $I_{V}$ be the identity operator of a vector space $V$ 

Let $\mc A$ be a $C^*$-algebra. We say that an element $a\in\mc A$ is {positive}, $a\geq0$, if $a=b^*b$ for some $b\in\mc A$. If $\mc A$ is not unital we let $\tilde{\mc A}=\mc A\times\mb C\cong\mc A\oplus\mb C$ denote the unitalization of $\mc A$ so that $1_{\tilde{\mc A}}=(0,1)$ is the unit element of $\tilde{\mc A}$. If $\mc A$ is unital we denote its unit element by $1_{\mc A}$ and set $\tilde{\mc A}=\mc A$ in all equations where the symbol $\tilde{\mc A}$ occurs. Let $\mc U_{\mc A}=\big\{a\in\tilde{\mc A}\,\big|\,a^*a=aa^*=1_{\tilde{\mc A}}\big\}$ be the group of unitary elements, $\mr{Inv}_{\mc A}$ the group of invertible elements, and $\mc Z_{\mc A}$ the center of $\tilde{\mc A}$.
Hence, $\mc U_{\mc A}=\mc U_{\tilde{\mc A}}\subset\mr{Inv}_{\mc A}$ and $\mc Z_{\mc A}=\mc Z_{\tilde{\mc A}}$.

Let $\mb V$ be a (right) module\footnote{We do not assume that $\mb V$ is an inner product module or a Hilbert $C^*$-module over $\mc A$.} over $\mc A$, or briefly an {\it $\mc A$-module}. We say that a map $s:\mb V\times\mb V\to\mc A$ is an {\it $\mc A$-sesquilinear form} if
\begin{itemize}
\item[(S1)] $s(u,v+ c  w)=s(u,v)+ c  s(u,w)$,
\item[(S2)] $s(v+ c  w,u)=s(v,u)+\ovl c  s(w,u)$,
\item[(S3)] $s(u,va)=s(u,v)a$ and
\item[(S4)] $s(ua,v)=a^*s(u,v)$
\end{itemize}
for all $u,\,v,\,w\in\mb V$, $ c \in\mb C$ and $a\in\mc A$. We denote the set of $\mc A$-sesquilinear forms $s:\mb V\times\mb V\to\mc A$ by $S_{\mc A}(\mb V)$. If, additionally,
\begin{itemize}
\item[(S5)] $s(v,v)\geq0$
\end{itemize}
for all $v\in\mb V$, we say that the $\mc A$-sesquilinear form $s$ is {positive}.
Note that (S1), (S2), and (S5) imply $s(w,v)=s(v,w)^*$ for all $v,\,w\in\mb V$.
If $s:\mb V\times\mb V\to\mc A$ satisfies conditions (S1)--(S6), where
\begin{itemize}
\item[(S6)] $s(v,v)=0$ (if and) only if $v=0$,
\end{itemize}
we say that $s$ is an {inner product}. For an inner product $s:\mb V\times\mb V\to\mc A$ we often denote $s(v,w)=\sis{v}{w}_{\mb V}=\sis{v}{w}$. When an $\mc A$-module $\mb V$ is equipped with an inner product $\sis{\,\cdot\,}{\,\cdot\,}:\mb V\times\mb V\to\mc A$, we may define a norm $\|\cdot\|:\mb V\to[0,\infty)$ through
$$
\|v\|=\sqrt{\|\sis{v}{v}\|_{\mc A}},\qquad v\in\mb V,
$$
and, if the normed space $(\mb V,\|\cdot\|)$ is complete, $\mb V$ is called a {\it Hilbert $\mc A$-module} (or a Hilbert $C^*$-module over $\mc A$).

When $\mb V$ and $\mb W$ are $\mc A$-modules we say that a map $L:\mb V\to\mb W$ is {\it $\mc A$-linear} if $L(v+ c  w)=L(v)+ c  L(w)$ and $L(va)=L(v)a$ for all $v,\,w\in\mb V$, $ c \in\mb C$ and $a\in\mc A$. We denote the set of $\mc A$-linear maps $L:\mb V\to\mb W$ by $\mr{Lin}_{\mc A}(\mb V;\mb W)$. We also denote the set of $\mc A$-linear maps $L:\mb V\to\mb V$ by $\mr{Lin}_{\mc A}(\mb V)$, and define the {commutator} $[L_1,L_2]=L_1L_2-L_2L_1\in\mr{Lin}_{\mc A}(\mb V)$ for all $L_1,\,L_2\in\mr{Lin}_{\mc A}(\mb V)$. Moreover, let $\mr{GL}_{\mc A}(\mb V)\subset \mr{Lin}_{\mc A}(\mb V)$ be the group of $\mc A$-linear bijections on $\mb V$. If $a$ belongs to the center of $\mc A$ we define the right multiplication map $a.\in\mr{Lin}_{\mc A}(\mb V)$ by $a.v:=va$ for all $v\in\mb V$.

Suppose next that $(\mb M,\sis{\,\cdot\,}{\,\cdot\,})$ is a Hilbert $\mc A$-module and denote the set of bounded $\mc A$-linear maps $B:\mb M\to\mb M$ by $\mc B_{\mc A}(\mb M)$. Let $\mb M^*$ denote the set of continuous $\mc A$-linear maps $f:\mb M\to\mc A$. We say that $\mb M$ is {\it self-dual} if for any $f\in\mb M^*$ there exists an $m\in\mb M$ such that $f(m')=\sis{m}{m'}$ for all $m'\in\mb M$. For example, any Hilbert $\mc A$-module $\mb M$ is self-dual if $\mc A$ is finite-dimensional \cite{manuilov}. Let $B\in\mc B_{\mc A}(\mb M)$. If there is $B^*\in\mc B_{\mc A}(\mb M)$ such that $\sis{m}{Bm'}=\sis{B^* m}{m'}$ for all $m,\,m'\in\mb M$ we say that $B$ is {adjointable}. Denote the $C^*$-algebra of adjointable maps $B\in\mc B_{\mc A}(\mb M)$ by $\mc L_{\mc A} (\mb M)$. It is easy to see that if $\mb M$ is self-dual then $\mc L_{\mc A}(\mb M)=\mc B_{\mc A}(\mb M)$. If $B\in\mc L_{\mc A}(\mb M)$ and $B^*=B$ we say that $B$ is {self-adjoint} and if $B$ is invertible and $B^*=B^{-1}$ then $B$ is {unitary.} A map $B\in\mc L_{\mc A}(\mb M)$ is unitary if and only if it preserves the inner product and is surjective \cite{lance}. We denote the group of unitary $\mc A$-linear maps on $\mb M$ by $\mc U_{\mc A}(\mb M)$. Especially, $\mc U_{\tilde{\mc A}}(\tilde{\mc A})\cong\mc U_{\mc A}$. In the case $\mc A=\mb C$ (so that $\mb M$ is a Hilbert space), we denote simply $\mc L_{\mb C}(\mb M)=\mc L(\mb M)$ and $\mc U_{\mb C}(\mb M)=\mc U(\mb M)$. 

\begin{rem}\rm\label{rem1}
Suppose that $\mc A$ is a $C^*$-algebra which is not unital, and $\mb V$ is an $\mc A$-module. Let $\tilde{\mc A}=\mc A\oplus\mb C$ be the unitalization of $\mc A$. Immediately one sees that $\mb V$ can be viewed as an $\tilde{\mc A}$-module and, if $\mb M$ is a Hilbert $\mc A$-module, then $\mb M$ is also a Hilbert $\tilde{\mc A}$-module. Any $\mc A$-sesquilinear form $s:\,\mb V\times\mb V\to\mc A$ can be seen as an $\tilde{\mc A}$-sesquilinear form $\mb V\times\mb V\to\tilde{\mc A}$ which we denote by the same symbol $s$. A similar result holds for $\mc A$-linear maps. For example, if $B\in\mc L_{\mc A}(\mb M)$ then $B\in\mc L_{\tilde{\mc A}}(\mb M)$ and $a.\in\mc L_{\tilde{\mc A}}(\mb M)$ for all $a\in\mc Z_A$.
\end{rem}

\begin{ma}\rm
Let $G$ be a group, $e$ the neutral element of $G$, and $\mb V$ a module over a $C^*$-algebra $\mc A$. Let $\m:\,G\times G\to\mc U_{\mc A}\cap\mc Z_{\mc A}$ be a 2-cocycle, that is, $\m(e,g)=1_{\tilde{\mc A}}=\m(g,e)$ and $\m(g,hk)\m(h,k)=\m(gh,k)\m(g,h)$ for all $g,\,h,\,k\in G$. We say that a map $U:\,G\to\mr{GL}_{\mc A}(\mb V)$ is a {\it ($\sigma$-)multiplier representation} of $G$ on $\mb V$ if $U(e)=I_{\mb V}$ and $U(gh)=\m(g,h).U(g)U(h)$ for all $g,\,h\in G$. If the 2-cocycle is trivial, $\m(g,h)\equiv 1_{\tilde{\mc A}}$, then the group homomorphism $U:\,G\to\mr{GL}_{\mc A}(\mb V)$ is called 
a {\it representation} of $G$ on $\mb V$. Finally, we say that a (multiplier) representation $\tilde U:\,G\to\mc U_{\mc A}(\mb M)$ is a {\it unitary (multiplier) representation} of $G$ on a Hilbert $\mc A$-module $\mb M$.
\end{ma}

\begin{rem}\label{vonNeumann}\rm 
Recall that any $C^*$-algebra $\mc A$ can be viewed as a norm-closed ${}^*$-subalgebra of $\lh$, where $\hil$ is a Hilbert space, and then each Hilbert $\mc A$-module $\mb M$ can be identified with a norm-closed $\mc A$-submodule of $\mc L(\hil;\hil')$, the set of bounded linear operators from $\hil$ to a Hilbert space $\hil'$ \cite[Corollary 3.14]{skeide2000}. Now $\mb M\subseteq\mc L(\hil;\hil')$ is equipped with the inner product $\langle m|m'\rangle:=m^*m'$, $m,\,m'\in\mb M$, and the norm $m\mapsto \|m\|=\sqrt{\|m^*m\|_{\lh}}$ which is the usual operator norm of $\mc L(\hil;\hil')$. Moreover, the $C^*$-algebra $\mc L_{\mc A}(\mb M)$ of $\mc A$-linear bounded adjointable operators on $\mb M$ can be identified with a norm closed ${}^*$-subalgebra $\mc B\subseteq\mc L(\hil')$ through
$$
\mc B\ni b\mapsto B\in\mc L_{\mc A}(\mb M),\qquad B(m)=bm,\qquad m\in\mb M\subseteq\mc L(\hil;\hil').
$$

If $\mc A$ is a $W^*$-algebra then it can be seen as a von Neumann algebra, that is, a ${}^*$-subalgebra of $\lh$ which is strongly closed and contains the identity $I_\hil$. Moreover, any self-dual Hilbert $\mc A$-module $\mb M$ becomes then a strongly closed $\mc A$-submodule of $\mc L(\hil;\hil')$ and $\mc L_{\mc A}(\mb M)$ is a $W^*$-algebra \cite[Proposition 3.10]{paschke73}. For example, if $\mc A=\lh$ then all self-dual Hilbert $\lh$-modules are of the form $\mc L(\hil;\hil')$ \cite[Proposition 13.9]{bhatskeide2000}. One has $\mc L_{\mc L(\hil)}\big(\mc L(\hil;\hil')\big)\cong\mc L(\hil')$, so that, especially, the unitary group $\mc U_{\lh}\big(\mc L(\hil;\hil')\big)$ is just the group $\mc U(\hil')$ of unitary operators on $\hil'$.
\end{rem}

\begin{rem}\label{vonNeumann1}\rm
Whenever $\mb L$ is a self-dual Hilbert $\mc A$-module over a $W^*$-algebra $\mc A$ we may define for all $\f\in\mc A_*^+$ ($\mc A_*^+$ being the positive cone within the predual $\mc A_*$ of $\mc A$) and $\ell'\in\mb L$ the seminorms $p_\f$ and $p_{\f,\ell'}$ on $\mb L$ through
$$
p_\f(\ell)=\sqrt{\f(\sis{\ell}{\ell})},\quad p_{\f,\ell'}(\ell)=|\f(\sis{\ell'}{\ell})|,\qquad\ell\in\mb L.
$$
We denote by $\tau_1$ the topology of $\mb L$ induced by all the seminorms $p_\f$ and by $\tau_2$ the topology induced by the seminorms $p_{\f,\ell'}$. It follows that $\mb L$ is complete with respect to these topologies \cite[Theorem 3.5.1]{manuilov}.

Suppose that $\mc A$ is a $W^*$-algebra and $\mb M$ is a Hilbert $\mc A$-module. The module $\mb M$ can be seen as a subset of its dual $\mb M^*$ through the map $\mb M\ni m\mapsto\hat m\in\mb M^*$, $\hat m(m')=\sis{m}{m'}$. There is an inner product $\sis{\,\cdot\,}{\,\cdot\,}:\,\mb M^*\times\mb M^*\to\mc A$ that makes $\mb M^*$ into a self-dual Hilbert $\mc A$-module and $\sis{\hat m}{\hat m'}=\sis{m}{m'}$, $\sis{\hat m}{f}=f(m)$ for all $m,\,m'\in\mb M$ and $f\in\mb M^*$ \cite[Theorem 3.2]{paschke73}. Moreover, each $\mc A$-linear adjointable operator $B\in\mc L_{\mc A}(\mb M)$ extends to a unique $\overline B\in\mc L_{\mc A}(\mb M^*)=\mc B_{\mc A}(\mb M^*)$ \cite[Corollary 3.7]{paschke73}. We may endow $\mb M^*$ with the topologies $\tau_1$ and $\tau_2$ introduced above. As a subset of $\mb M^*$ the original module $\mb M$ is $\tau_1$-dense; this follows from an analysis parallelling that of the proof of the indication (ii)$\Rightarrow$(i) in the proof of \cite[Theorem 3.5.1]{manuilov}. Similarly, applying the same theorem, $\mb M$ is $\tau_2$-dense in $\mb M^*$.

For example, assume that $\mc A=\mc L(\hil)$ and consider a Hilbert $\mc L(\hil)$-module $\mb M$ so that (as a self-dual module) $\mb M^*\cong\mc L(\hil;\hil')$ (here $\hil$ and $\hil'$ are Hilbert spaces). We may identify $\mc L(\hil)_*^+$ with the set $\mc T(\hil)^+$ of positive trace-class operators on $\hil$, and for the seminorms $p_t$, $t\in\mc T(\hil)^+$, generating the topology $\tau_1$ this implies
$$
p_t(m)=\sqrt{\tr{tm^*m}}=\sqrt{\sum_{j=1}^\infty\|m\xi_j\|^2}=:p_{\{\xi_j\}_j}(m)
$$
for all $m\in\mc L(\hil;\hil')$, where the vectors $\xi_j\in\hil$ constitute a decomposition $t=\sum_j|\xi_j\ra\la\xi_j|$. It follows that $\tau_1$ is generated by the seminorms $p_{\{\xi_j\}_j}$ for sequences of vectors $\xi_j$ such that $\sum_j\|\xi_j\|^2<\infty$. We call this topology as the $\sigma$-strong topology of $\mc L(\hil;\hil')$. Similarly, the topology $\tau_2$ is induced by the seminorms $m'\mapsto|\tr{tm^*m'}|$ for $t\in\mc T(\hil)^+$ and $m\in\mc L(\hil,\hil')$ so that we call it as the $\sigma$-weak topology of $\mc L(\hil,\hil')$. Hence, any Hilbert $\mc L(\hil)$-module can be viewed as a $\sigma$-strongly/weakly dense $\mc L(\hil)$-submodule of $\mc L(\hil;\hil')$ with some Hilbert space $\hil'$.
\end{rem}

\begin{rem}\label{vonNeumann2}\rm
Let $\Omega$ be a set equipped with a $\sigma$-algebra $\Sigma$, and let $\nu:\,\Sigma\to[0,\infty]$ be a $\sigma$-finite measure. We let $L^\infty(\nu)$ denote the von Neumann algebra of (equivalence classes of) essentially bounded $\nu$-measurable functions $f:\,\Omega\to\mb C$. Moreover, we let $L^2(\nu)$ be the Hilbert space of (equivalence classes of) $\nu$-square integrable (measurable) functions $\psi:\,\Omega\to\mb C$ and let
\begin{equation}\label{valuedirect}
\hil_\oplus=\int_\Omega^\oplus\hil_{n(\omega)}\,d\nu(\omega)\cong \left[\bigoplus_{n\in\mb N}L^2(\nu_n)\otimes\hil_n\right]\oplus\left[L^2(\nu_\infty)\otimes\hil_\infty\right]
\end{equation}
be the direct integral Hilbert space \cite{Dix2} where $\Om\ni\omega\mapsto n(\omega)\in\{0\}\cup\mb N\cup\{\infty\}$ is $\nu$-measurable and, for each $n\in\{0\}\cup\mb N\cup\{\infty\}$, $\hil_n$ is an $n$-dimensional separable Hilbert space and $\nu_n$ is the restriction of $\nu$ to the $\sigma$-algebra $\Sigma_n:=\Sigma\cap\Omega_n$ where $\Omega_n:=\{\omega\in\Omega\,|\,n(\omega)=n\}$. Note that, in our definition, the direct integral $\hil_\oplus$ depends essentially only on the multiplicity function $\om\mapsto n(\om)$ (and the measure $\nu$). As above, without any further mention, we identify the Hilbert space $L^2(\nu;\hil)$ of (equivalence classes of) $\nu$-square integrable functions $\psi:\,\Omega\to\hil$ with $L^2(\nu)\otimes\hil$ (where $\hil$ is a Hilbert space).

Let $\nu'$ be the restriction of $\nu$ to $\Sigma':=\Sigma\cap\Omega'$, $\Omega':=\Omega\setminus\Omega_0$. Note that $\hil_\oplus$ is separable if and only if $L^2(\nu')$ is separable which is the case exactly when the measure space $(\Omega',\Sigma',\nu')$ has a countable basis \cite[Theorem 8.1]{Taylor}.

Since any $f\in L^\infty(\nu)$ defines a multiplicative (or diagonalizable) operator $\hat f\in\mc L(\hil_\oplus)$ through $(\hat f\psi)(\om)=f(\om)\psi(\om)$ for all $\psi\in\hil_\oplus$ and $\nu$-a.a.\ $\om\in\Om$, one can view $L^\infty(\nu)$ as a von Neumann subalgebra of $\mc L(\hil_\oplus)$ and the commutant of $L^\infty(\nu)$ is the von Neumann subalgebra consisting of the decomposable operators of  $\mc L(\hil_\oplus)$.\footnote{Recall that a decomposable bounded operator $D=\int_\Om^\oplus D(\om)\,d\nu(\om)$ is defined by a weakly measurable field $\Om\ni\om\mapsto D(\om)\in\mc L(\hil_{n(\om)})$ such that $\|D\|=\nu\mr{-ess}\,\sup_{\om\in\Om}\|D(\om)\|<\infty$ via $(D\psi)(\om)=D(\om)\psi(\om)$ for all $\psi\in\hil_\oplus$ and $\nu$-a.a.\ $\om\in\Om$.} Moreover, $\hil_\oplus$ is a self-dual Hilbert $L^\infty(\nu)$-module.

It is well known that any abelian von Neumann algebra of bounded operators on a separable Hilbert space $\hil'$ is isomorphic to some $L^\infty(\mu)$ where the measure $\mu:\,\Sigma\to[0,\infty)$ is $\sigma$-finite (with a countable basis). In addition, $\hil'$ is unitary equivalent with some direct integral Hilbert space where $L^\infty(\mu)$ acts multiplicatively as above \cite{Dix2}.
\end{rem}

\section{Positive covariant kernels}\label{kernels}

Suppose that $G$ is a group and $X$ is a $G$--space, i.e.\ the (nonempty) set $X$ is equipped with a $G$--action $G\times X\ni(g,x)\mapsto gx\in X$, that is, $(gh)x=g(hx)$ for all $g,\,h\in G$ and $x\in X$ and $ex=x$, $x\in X$, where $e$ is the unit of $G$. Assume that $\mc A$ is a (possibly nonunital) $C^*$-algebra, $\mb V$ an $\mc A$-module, and
$U:\,G\to\mr{GL}_{\mc A}(\mb V)$ a representation of $G$ on $\mb V$. Moreover, let us fix a map $\a:\,G\times X\to\mr{Inv}_{\mc A}\cap\mc Z_{\mc A}$ such that $\a(e,x)=1_{\tilde{\mc A}}$, $x\in X$, and  
\begin{equation}\label{multi}
\a(gh,x)=\m(g,h)\a(h,x)\a(g,hx),\qquad g,\,h\in G,\quad x\in X.
\end{equation}
where $\m:\,G\times G\to\mc U_{\mc A}\cap\mc Z_{\mc A}$ is a 2-cocycle called as {\it the 2-cocycle associated to $\alpha$.} 

\begin{ma}\label{kovarkernel}
{\rm
We say that a (non-zero) map $K=\big[X\times X\ni(x,y)\mapsto K_{x,y}\in S_{\mc A}(\mb V)\big]$ is a {\it positive $(X,\a,U)$-covariant kernel} or simply {\it positive covariant kernel} if it is positive, i.e.,\ for all $n\in\mb N$, $x_1,\ldots,\,x_n\in X$ and $v_1,\ldots,\,v_n\in\mb V$
$$
\sum_{j,k=1}^n K_{x_j,x_k}(v_j,v_k)\geq0
$$
and covariant, i.e.,\ for all $g\in G$, $x,\,y\in X$ and $v,\,w\in\mb V$
\begin{equation}\label{covacond}
K_{gx,gy}(v,w)=\a(g,x)^*K_{x,y}\big(U(g^{-1})v,U(g^{-1})w\big)\a(g,y).
\end{equation}
We denote the convex set of $(X,\a,U)$-covariant positive kernels by ${\bf Ker}_\a^U(X)$.
Note that, by Remark \ref{rem1}, $K_{x,y}\in S_{\tilde{\mc A}}(\mb V)$ for all $x,\,y\in X$ but still the right hand side of equation \eqref{covacond} belongs to $\mc A$ since $\mc A\cdot \tilde{\mc A}\subseteq\mc A$.
}
\end{ma}

\begin{theor}\label{Kdilat}
Suppose that $K\in{\bf Ker}_\a^U(X)$. There is a Hilbert $\mc A$-module $(\mb M,\sis{\,\cdot\,}{\,\cdot\,})$, a mapping $R:X\to\mr{Lin}_{\mc A}(\mb V;\mb M)$ and a $\sigma$-multiplier representation $\tilde U:G\to\mc U_{\mc A}(\mb M)$ with the 2-cocycle $\m$ associated to $\alpha$ (i.e.\ $\tilde U(e)=I_{\mb M}$ and $\tilde U(g)\tilde U(h)=\m(g,h).\tilde U(gh)$ for all $g,\,h\in G$) such that
\begin{itemize}
\item[{\rm (i)}] $K_{x,y}(v,w)=\sis{R(x)v}{R(y)w}$ for all $x,\,y\in X$ and $v,\,w\in\mb V$,
\item[{\rm (ii)}] $\mr{lin}_{\mb C}\{R(x)v\,|\,x\in X,\ v\in\mb V\}$ is dense in $\mb M$ and
\item[{\rm (iii)}] $R(x)U(g)=\a(g,g^{-1}x).\tilde U(g)R(g^{-1}x)$ for all $x\in X$ and $g\in G$.
\end{itemize}
If $(\mb M',R',\tilde U')$ is another triple satisfying the conditions (i)--(iii) above, then there is a unitary map $W\in\mr{Lin}_{\mc A}(\mb M,\mb M')$ such that $WR(x)=R'(x)$ for all $x\in X$ and $W\tilde U(g)=\tilde U'(g)W$ for all $g\in G$. Conversely, if $(\mb M,\sis{\,\cdot\,}{\,\cdot\,})$ is a Hilbert $\mc A$-module, $R:X\to\mr{Lin}_{\mc A}(\mb V;\mb M)$ a map, and  $\tilde U:G\to\mc U_{\mc A}(\mb M)$ a multiplier representation with the 2-cocycle $\m$ (associated to $\alpha$) which satisfy (iii) then (i) defines a positive $(X,\a,U)$-covariant kernel.
\end{theor}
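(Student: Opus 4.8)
The plan is to perform a covariant Kolmogorov decomposition (a KSGNS-type construction). I would first build the dilation module. Form the algebraic direct sum $\mb V_0=\bigoplus_{x\in X}\mb V$ of $\mc A$-modules, writing a typical element as a finite formal sum $\sum_j e_{x_j}\otimes v_j$, and equip $\mb V_0$ with the $\mc A$-sesquilinear form $s_0$ determined by $s_0(e_x\otimes v,e_y\otimes w)=K_{x,y}(v,w)$; the axioms (S1)--(S4) for $s_0$ are routine, and (S5) for $s_0$ is precisely positivity of the kernel $K$. Hence, by the Cauchy--Schwarz inequality for positive $\mc A$-sesquilinear forms, the null set $N=\{\xi\in\mb V_0\,|\,s_0(\xi,\xi)=0\}$ is an $\mc A$-submodule on which $s_0$ vanishes identically, $s_0$ descends to an inner product on $\mb V_0/N$, and I would take $\mb M$ to be the completion of $\mb V_0/N$ in the norm $\xi\mapsto\sqrt{\|s_0(\xi,\xi)\|_{\mc A}}$. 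Setting $R(x)v=[e_x\otimes v]\in\mb V_0/N\subseteq\mb M$ gives an $\mc A$-linear map $R(x):\mb V\to\mb M$; then (i) holds by construction and (ii) holds because $\mb V_0/N=\mr{lin}_{\mb C}\{R(x)v\,|\,x\in X,\ v\in\mb V\}$ is dense in $\mb M$.

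Next I would determine $\tilde U$. Condition (iii) forces the relation $\tilde U(g)R(x)=\a(g,x)^{-1}.R(gx)U(g)$, so the natural step is to \emph{define} $\tilde U(g)$ on $\mb V_0$ by $\tilde U(g)(e_x\otimes v)=\a(g,x)^{-1}.(e_{gx}\otimes U(g)v)$ and extend $\mb C$-linearly. The essential point is that $\tilde U(g)$ preserves $s_0$: expanding $s_0\big(\tilde U(g)(e_x\otimes v),\tilde U(g)(e_y\otimes w)\big)$ produces a conjugation by $\a(g,x)^*$ on the left and $\a(g,y)$ on the right coming from the module structure, and this is cancelled exactly by the factors appearing in the covariance identity \eqref{covacond} for $K$ --- centrality of $\a(g,x)$ is what makes this work. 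Consequently $\tilde U(g)$ preserves $N$, descends to $\mb V_0/N$, and extends to an inner-product-preserving $\mc A$-linear map of $\mb M$. Then $\tilde U(e)=I_{\mb M}$ is immediate, the cocycle identity \eqref{multi} yields $\tilde U(g)\tilde U(h)=\m(g,h).\tilde U(gh)$, and taking $h=g^{-1}$ shows $\tilde U(g)\tilde U(g^{-1})$ is right multiplication by the invertible central element $\m(g,g^{-1})$, so each $\tilde U(g)$ is surjective and hence lies in $\mc U_{\mc A}(\mb M)$; thus $\tilde U$ is a $\m$-multiplier representation, and (iii) holds by the very relation used to define it.

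For uniqueness, given a second triple $(\mb M',R',\tilde U')$ satisfying (i)--(iii), I would define $W$ on the dense submodule $\mr{lin}_{\mb C}\{R(x)v\}\subseteq\mb M$ by $W(R(x)v)=R'(x)v$. Using (i) for both triples one gets $\sis{W\xi}{W\eta}_{\mb M'}=\sis{\xi}{\eta}_{\mb M}$ on this submodule (polarizing from the diagonal case, which is a finite sum of values of $K$); hence $W$ is well defined, $\mc A$-linear, inner-product preserving, and extends continuously to $\mb M\to\mb M'$. Its range is closed and contains the dense set $\mr{lin}_{\mb C}\{R'(x)v\}$, so $W$ is unitary. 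Then $WR(x)=R'(x)$ by construction, and $W\tilde U(g)=\tilde U'(g)W$ follows by checking on generators $R(x)v$ via (iii) for both triples together with the $\tilde{\mc A}$-linearity of $W$. The converse is a direct verification: for given $(\mb M,R,\tilde U)$ put $K_{x,y}(v,w)=\sis{R(x)v}{R(y)w}$; conditions (S1)--(S5) follow from $\mc A$-linearity of the $R(x)$ and from sesquilinearity and positivity of $\sis{\,\cdot\,}{\,\cdot\,}$, positivity of the kernel from $\sum_{j,k}K_{x_j,x_k}(v_j,v_k)=\Sis{\sum_j R(x_j)v_j}{\sum_k R(x_k)v_k}\ge0$, and covariance \eqref{covacond} by rewriting (iii) as $R(gx)v=\big[\tilde U(g)R(x)U(g^{-1})v\big]\a(g,x)$, substituting into $\sis{R(gx)v}{R(gy)w}$, pulling the central factors $\a(g,x)^*,\a(g,y)$ outside, and using that $\tilde U(g)$ preserves the inner product.

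The step I expect to be the main obstacle is the bookkeeping in the construction of $\tilde U$: arranging the positions of $\a(g,x)$, $\a(g,x)^{-1}$, $\a(g,x)^*$, the $2$-cocycle $\m$, and the centrality conditions so that the defined operators genuinely preserve $s_0$ and obey the multiplier law --- this is exactly where the covariance hypothesis on $K$ and the identity \eqref{multi} are used in an essential way. Building $(\mb M,R)$ is the standard Kolmogorov decomposition, and the uniqueness and converse parts are routine.
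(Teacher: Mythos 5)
Your proposal is correct and follows essentially the same route as the paper: the key covariant step — defining $\tilde U(g)$ on the generators by $\tilde U(g)R(x)v=R(gx)U(g)v\,\a(g,x)^{-1}$, verifying inner-product preservation from the covariance identity \eqref{covacond} together with centrality of $\a$, and deducing the $\m$-multiplier law from \eqref{multi} — is exactly the paper's argument, and the uniqueness and converse parts match as well. The only difference is that you spell out the underlying Kolmogorov decomposition (direct sum, quotient by the null submodule via Cauchy--Schwarz, completion), which the paper simply imports from \cite{KSGNS}.
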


\begin{proof}
Let $K\in{\bf Ker}_\a^U(X)$. The existence of the Hilbert $\mc A$-module $\mb M$ and the map $R$ with the properties (i) and (ii), and the existence of a unitary $W\in\mr{Lin}_{\mc A}(\mb M,\mb M')$ with $WR(x)=R'(x)$, $x\in X$, are proved in  \cite[Proposition 3.1, Theorem 3.3]{KSGNS}. Hence, it is left to construct the representation $\tilde U$ satisfying the conditions of the proposition.

Suppose that a pair $(\mb M,R)$, satisfying items (i) and (ii),  is chosen.  Assume that $\eta\in\mb M$ belongs to the dense subspace of item (ii), i.e.,\ there are $n\in\mb N$, $x_1,\ldots,\,x_n\in X$, $v_1,\ldots,\,v_n\in\mb V$ such that $\eta=\sum_{j=1}^nR(x_j)v_j$. Define
$$
\eta^g=\sum_{j=1}^nR(gx_j)U(g)v_j\a(g,x_j)^{-1},\qquad g\in G.
$$
Using the covariance condition \eqref{covacond}, we get
\begin{eqnarray*}
\sis{\eta^g}{\eta^g}&=&\sum_{j,k=1}^n\sis{R(gx_j)U(g)v_j\a(g,x_j)^{-1}}{R(gx_k)U(g)v_k\a(g,x_k)^{-1}}\\
&=&\sum_{j,k=1}^n[\a(g,x_j)^{-1}]^*K_{gx_j,gx_k}\big(U(g)v_j,U(g)v_k\big)\a(g,x_k)^{-1}\\
&=&\sum_{j,k=1}^n[\a(g,x_j)^{-1}]^*\a(g,x_j)^*K_{x_j,x_k}(v_j,v_k)\a(g,x_k)\a(g,x_k)^{-1}\\
&=&\sum_{j,k=1}^n\sis{R(x_j)v_j}{R(x_k)v_k}=\sis{\eta}{\eta}
\end{eqnarray*}
showing that, for any $g\in G$, one may define a linear map $\tilde U(g):\mb M\to\mb M$ via
$$
\tilde U(g)R(x)v:=R(gx)U(g)v\a(g,x)^{-1},\qquad x\in X,\;v\in\mb V,
$$
which is clearly well-defined, $\mc A$-linear and bounded, and which preserves the inner products by polarization. Especially, $\tilde U(e)=I_{\mb M}$ and one gets
\begin{eqnarray*}
\tilde U(gh)R(x)v&=&R(ghx)U(gh)v\a(gh,x)^{-1}\\
&=&R\big(g(hx)\big)U(g)U(h)v\a(g,hx)^{-1}\a(h,x)^{-1}\m(g,h)^{-1}\\
&=&\tilde U(g)R(hx)U(h)v\a(h,x)^{-1}\m(g,h)^{-1}\\
&=&\tilde U(g)\tilde U(h)R(x)v\m(g,h)^*
\end{eqnarray*}
for all $x\in X$, $v\in\mb V$, and $g,\,h\in G$, so that $\tilde U(gh)=\m(g,h)^*.\tilde U(g)\tilde U(h)$ or $\tilde U(g)\tilde U(h)=\m(g,h).\tilde U(gh)$ by property (ii). For all $m,\, m'\in\mb M$ one gets
\begin{eqnarray*}
\sis{\tilde U(g)m}{ m'}&=&\sis{\tilde U(g^{-1})\tilde U(g)m}{\tilde U(g^{-1}) m'}\\
&=&\sis{m\m(g^{-1},g)}{\tilde U(g^{-1}) m'}=\sis{m}{\tilde U(g^{-1}) m' \m(g^{-1},g)^*}
\end{eqnarray*}
so that $\tilde U(g)^*=\m(g^{-1},g)^*.\tilde U(g^{-1})=\tilde U(g)^{-1}$ and $\tilde U:G\to\mc U_{\mc A}(\mb M)$ is a multiplier representation. Obviously, (iii) holds.

Assume that $(\mb M',R',\tilde U')$ satisfy conditions (i)--(iii). If $W:\mb M\to\mb M'$ is such that $WR(x)=R'(x)$ for all $x\in X$ then 
\begin{eqnarray*}
W\tilde U(g)R(x)&=&\a(g,x)^{-1}.WR(gx)U(g)=\a(g,x)^{-1}.R'(gx)U(g)\\
&=&\tilde U'(g)R'(x)=\tilde U'(g)WR(x)
\end{eqnarray*}
for all $x\in X$ and $g\in G$, and hence (ii) implies $W\tilde U(g)=\tilde U'(g)W$. The rest of the proof is straightforward.
\end{proof}

\begin{ma}
{\rm
For any $K\in{\bf Ker}_\a^U(X)$, we say that a triple $(\mb M,R,\tilde U)$ satisfying conditions (i)--(iii) of the preceding theorem is called a {\it minimal $(X,\a,U)$-covariant (Kolmogorov) decomposition} for $K$. A minimal $(X,\a,U)$-covariant decomposition for $K$ is unique up to a unitary transformation intertwining the unitary representations. If $\mb M$ is self-dual we say that $K$ is {\it regular}. Although we concentrate here on covariant kernels, the results presented in the sequel also apply to the case of positive kernels with no particular covariance properties. Namely, if one chooses the group $G$ to be  trivial, $G=\{e\}$, then the kernels $K\in{\bf Ker}_\a^U$ are `non-covariant' and, in this case, we say that a pair $(\mb M,R)$ satisfying the conditions (i) and (ii) is a {\it minimal (Kolmogorov) decomposition} for $K$.
}
\end{ma}

\begin{rem}\rm
We could extend our definition of $\alpha$ and assume that its range could also contain $0$ so that, for any $g\in G$ and $x\in X$, $\alpha(g,x)$ is either invertible element of the center or $0$. Similarly, we may drop out the condition $\a(e,x)=1_{\tilde{\mc A}}$. Now if $\alpha(h,x)=0$ for some $h\in G$ and $x\in X$ then \eqref{multi} implies that $\alpha(g,x)=0$ for all $g\in G$ and $K_{gx,gy}=K_{gy,gx}=0$ for all $g\in G$ and $y\in X$ by \eqref{covacond}; let us denote the set of these $x\in X$ as $\Theta_\alpha$. Then, by redefining $\alpha'(g,hx):=\sigma(g,h)^*$ for all $g,\,h\in G$ and $x\in\Theta_\alpha$ and $\alpha'(g,x):=\alpha(g,x)$ otherwise, one sees that \eqref{multi} and \eqref{covacond} hold when $\alpha$ is replaced by $\alpha'$. Moreover, recalling that $\sigma(g,e)=1_{\tilde{\mc A}}$, one obtains using \eqref{multi} that $\alpha(g,x)=\alpha(ge,x)=\alpha(e,x)\alpha(g,x)$ for all $g\in G$ and $x\in X$. From this one sees that (by the definition of $\Theta_\alpha$) $\alpha(e,x)=0$ when $x\in\Theta_\alpha$ and otherwise (since now $\alpha(g,x)$ is invertible) one finds that $\alpha(e,x)=1_{\tilde{\mc A}}$. Thus, for the redefined map $\alpha'$, $\alpha'(e,x)=1_{\tilde{\mc A}}$ for all $x\in X$. Finally, we note that if $K_{x,x}=0$ (i.e.\ $R(x)=0$) for some $x\in G$ then $K_{gx,y}=K_{y,gx}=0$ for all $g\in G$ and $y\in X$ so that one can restrict $K$ to the set $X_0\times X_0$ where $X_0=\{x\in X\,|\,K(x,x)\ne 0\}$.  Hence, in many applications one can assume that $K_{x,x}\ne 0$ (i.e.\ $R(x)\ne 0$) for all $x\in X$.
\end{rem}

\begin{rem}\rm\label{invariant}
In the case when the representation $U$ is trivial, $U=I$, i.e.\ $U(g)=I_{\mb V}$ for all $g\in G$, we say that $K\in{\bf Ker}_\a^I(X)$ is (projectively) invariant. Hence, Theorem \ref{Kdilat} is a generalization of Theorem 2.7 of \cite{partha} (where $\mc A=\mb C=\mb V$).
\end{rem}

\begin{rem}\rm\label{central}
In many physically relevant situations, one considers a multiplier representation $U:\,G\to\mr{GL}_{\mc A}(\mb V)$, i.e.\ $U(e)=I_{\mb V}$ and $U(g)U(h)v=U(gh)vz(g,h)$ for all $g,\,h\in G$ and $v\in\mb V$ where $z:\,G\times G\to\mc U_{\mc A}\cap\mc Z_{\mc A}$ is a 2-cocycle on $G$. Now a positive $(X,\a,U)$-covariant kernel $K$ can be defined in exactly the same way as in Definition \ref{kovarkernel} and one obtains Proposition \ref{Kdilat} (i.e.\ the triple $(\mb M,R,\tilde U)$) with the exception that $\tilde U:G\to\mc U_{\mc A}(\mb M)$ is a $\m'$-multiplier representation with the 2-cocycle $\m'(g,h)=\m(g,h)z(g,h)$ (where $\m$ is associated to $\a$). In this case, one can consider a central extension group $G^z:=G\times(\mc U_{\mc A}\cap\mc Z_{\mc A})$ where the group product is $(g,t)(h,u):=\big(gh,z(g,h)tu\big)$, $g,\,h\in G$, $t,\,u\in\mc U_{\mc A}\cap\mc Z_{\mc A}$, and equip $X$ with a $G^z$--action $(g,t)\cdot x:=gx$ for all $(g,t)\in G^z$ and $x\in X$. Moreover, we may set up $\a^z:\,G^z\times X\to\mr{Inv}_{\mc A}\cap\mc Z_{\mc A}$ through $\a^z\big((g,t),x\big):=\a(g,x)$ and define $\sigma^z:\,G^z\times G^z\to\mc U_{\mc A}\cap\mc Z_{\mc A}$ via $\sigma^z\big((g,t),\,(h,u)\big):=\sigma(g,h)$. Now $\sigma^z$ is the 2-cocycle associated to $\a^z$, $U^z:\,G^z\to\mr{GL}_{\mc A}(\mb V)$, $U^z(g,t)v:=U(g)vt$ is a representation of $G^z$ on $\mb V$ and $K$ can be viewed as a positive $(X,\a^z,U^z)$-covariant kernel. Let $(\mb M,R,\tilde U^z)$ be a minimal $(X,\a^z,U^z)$-covariant Kolmogorov decomposition for $K$ (recall that the minimal $\mb M$ and $R$ for $K$ are unique up to a unitary equivalence as stated in Theorem \ref{Kdilat}). Especially, $\tilde U^z:G^z\to\mc U_{\mc A}(\mb M)$ is a $\sigma^z$-multiplier representation, $K_{x,y}(v,w)=\sis{R(x)v}{R(y)w}$ for all $x,\,y\in X$ and $v,\,w\in\mb V$, and $R(x)U(g)v=\tilde U^z(g,1_{\tilde{\mc A}})R(g^{-1}x)v\a(g,g^{-1}x)$ for all $x\in X$, $g\in G$, and $v\in\mb V$. Finally, we note that $\tilde U^z(g,t)=t.\tilde U(g)$ for all $(g,t)\in G^z$.
\end{rem}

Suppose now that $\mc A$ is a $W^*$-algebra and $K\in{\bf Ker}_\a^U(X)$ with a minimal $(\alpha,U)$-covariant decomposition $(\mb M,R,\tilde U)$. According to Remark \ref{vonNeumann1}, we may view $\mb M^*$ as a self-dual Hilbert $\mc A$-module where $\mb M$ is a $\tau_1$- (or $\tau_2$-) dense subset. Thus, the $\mb C$-linear combinations of vectors of the form $R(x)v$, $x\in X$, $v\in\mb V$, form a $\tau_1$-dense subspace of $\mb M^*$. Moreover, $\tilde U$ extends to a unique unitary multiplier representation of $G$ on $\mb M^*$ which we also denote by $\tilde U$. We now have the following corollary of Theorem \ref{Kdilat}:

\begin{cor}\label{WKdilat}
Suppose that $\mc A$ is a $W^*$-algebra and $K\in{\bf Ker}_\a^U(X)$. There is a self-dual Hilbert $\mc A$-module $(\mb L,\sis{\,\cdot\,}{\,\cdot\,})$ equipped with the topology $\tau_1$ (resp.\  $\tau_2$) introduced in Remark \ref{vonNeumann1}, a mapping $R:X\to\mr{Lin}_{\mc A}(\mb V;\mb L)$ and a $\m$-multiplier representation $\tilde U:G\to\mc U_{\mc A}(\mb L)$ such that
\begin{itemize}
\item[{\rm (i)}] $K_{x,y}(v,w)=\sis{R(x)v}{R(y)w}$ for all $x,\,y\in X$ and $v,\,w\in\mb V$,
\item[{\rm (ii)}] $\mr{lin}_{\mb C}\{R(x)v\,|\,x\in X,\ v\in\mb V\}$ is $\tau_1$- (resp.\ $\tau_2$-) dense in $\mb L$ and
\item[{\rm (iii)}] $R(x)U(g)=\a(g,g^{-1}x).\tilde U(g)R(g^{-1}x)$ for all $x\in X$ and $g\in G$.
\end{itemize}
If $(\mb L',R',\tilde U')$ is another triple satisfying the conditions (i)--(iii) above, then there is a unitary map $W\in\mr{Lin}_{\mc A}(\mb L,\mb L')$ such that $WR(x)=R'(x)$ for all $x\in X$ and $W\tilde U(g)=\tilde U'(g)W$ for all $g\in G$.
\end{cor}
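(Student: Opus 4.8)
The plan is to deduce the corollary from Theorem~\ref{Kdilat} by passing to self-dual completions, as already announced in the paragraph preceding the statement. For existence, I would first apply Theorem~\ref{Kdilat} to obtain a minimal $(X,\a,U)$-covariant decomposition $(\mb M,R,\tilde U)$ of $K$, with $\mb M$ a (generally non-self-dual) Hilbert $\mc A$-module. Since $\mc A$ is a $W^*$-algebra, Remark~\ref{vonNeumann1} (which packages \cite[Theorem 3.2]{paschke73}, \cite[Corollary 3.7]{paschke73} and \cite[Theorem 3.5.1]{manuilov}) lets me set $\mb L:=\mb M^*$: this is a self-dual Hilbert $\mc A$-module, complete for $\tau_1$ and $\tau_2$, in which $\mb M$ sits as a $\tau_1$- (resp.\ $\tau_2$-) dense $\mc A$-submodule on which the inner product of $\mb L$ restricts to that of $\mb M$, and $\tilde U$ extends uniquely to a unitary $\m$-multiplier representation of $G$ on $\mb L$, still written $\tilde U$. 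Composing each $R(x)$ with the embedding $\mb M\hookrightarrow\mb L$ yields $R:X\to\mr{Lin}_{\mc A}(\mb V;\mb L)$. Then (i) is immediate from the agreement of the inner products; (ii) is the density clause of Remark~\ref{vonNeumann1} applied to the span, which is dense in $\mb M$ by property~(ii) of Theorem~\ref{Kdilat} and hence in $\mb L$; and (iii) holds verbatim, since each $R(x)v$ already lies in $\mb M$ and $\tilde U$ restricts on $\mb M$ to the multiplier representation of Theorem~\ref{Kdilat}, for which (iii) was established.

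For the uniqueness clause, let $(\mb L',R',\tilde U')$ be a second triple with properties (i)--(iii). On the dense subspace $\mr{lin}_{\mb C}\{R(x)v\mid x\in X,\ v\in\mb V\}$ of $\mb L$ I would define $W_0\big(\sum_jR(x_j)v_j\big):=\sum_jR'(x_j)v_j$. Property (i) for both triples gives $\sis{\sum_jR(x_j)v_j}{\sum_kR(x_k)v_k}=\sum_{j,k}K_{x_j,x_k}(v_j,v_k)=\sis{\sum_jR'(x_j)v_j}{\sum_kR'(x_k)v_k}$, so $W_0$ is well defined, $\mc A$-linear, and inner-product preserving, hence preserves each seminorm $p_\f$. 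Using the $\tau_1$-completeness of $\mb L'$, $W_0$ extends uniquely to a $\tau_1$-continuous $\mc A$-linear map $W:\mb L\to\mb L'$; continuity of $\f\circ\sis{\,\cdot\,}{\,\cdot\,}$ (via the Cauchy--Schwarz inequality for the positive functional $\f$) together with a polarization argument shows $W$ preserves $\sis{\,\cdot\,}{\,\cdot\,}$ on all of $\mb L$, and its image is $\tau_1$-closed (being the $\tau_1$-isometric image of the $\tau_1$-complete $\mb L$) and contains the $\tau_1$-dense span of the $R'(x)v$, so $W$ is onto; thus $W$ is unitary (and automatically a $\tau_2$-homeomorphism, so the same map also settles the $\tau_2$-version). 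By construction $WR(x)=R'(x)$ for all $x\in X$. Finally, property (iii) of the two triples gives $\tilde U(g)R(y)v=\a(g,y)^{-1}.R(gy)U(g)v$, whence, since $W$ is $\mc A$-linear with $WR(gy)=R'(gy)$, $W\tilde U(g)R(y)v=\a(g,y)^{-1}.R'(gy)U(g)v=\tilde U'(g)R'(y)v=\tilde U'(g)WR(y)v$; as $\tilde U(g)$, $\tilde U'(g)$ and $W$ are $\tau_1$-continuous, $W\tilde U(g)=\tilde U'(g)W$ on all of $\mb L$.

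All the structural content is inherited from Theorem~\ref{Kdilat}; the only delicate point is the interplay between the Hilbert-module norm and the topologies $\tau_1,\tau_2$ on the self-dual completions --- namely, that inner-product preservation on a dense subspace upgrades, through $\tau$-completeness, to a surjective isometry between the completions, and that $\tilde U(g)$, $\tilde U'(g)$, $W$ and the right multiplications by central elements are $\tau$-continuous, so that identities checked on the dense span propagate to all of $\mb L$. I expect this topological bookkeeping, together with keeping the $\tau_1$- and $\tau_2$-cases in step, to be the only (and rather mild) obstacle; alternatively one could route the uniqueness argument through the norm-closed submodule $\ovl{\mr{lin}_{\mb C}\{R'(x)v\}}\subseteq\mb L'$ (to which $\tilde U'$ restricts by (iii)), invoke the uniqueness in Theorem~\ref{Kdilat} there, and then extend the resulting unitary functorially to the self-dual completions.
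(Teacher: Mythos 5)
Your proposal is correct and follows essentially the same route as the paper, which presents this as an immediate corollary of Theorem~\ref{Kdilat}: one passes to the self-dual completion $\mb L=\mb M^*$ of the minimal decomposition via Remark~\ref{vonNeumann1} and extends $\tilde U$ by \cite[Corollary 3.7]{paschke73}, exactly as you do. Your uniqueness argument (extending the densely defined isometry $W_0$ by $\tau$-continuity, or alternatively routing through the norm-closed submodule and the uniqueness clause of Theorem~\ref{Kdilat}) supplies details the paper leaves implicit, and is sound.
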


\begin{ma}\label{W*dilat}\rm
When $\mc A$ is a $W^*$-algebra and $K\in{\bf Ker}_\a^U(X)$ we call a triple $(\mb L,R,\tilde U)$ satisfying the conditions (i)--(iii) of Corollary \ref{WKdilat} as a {\it $W^*$-minimal $(X,\a,U)$-covariant (Kolmogorov) decomposition} for $K$. The pair $(\mb L,R)$ is  simply called a {\it $W^*$-minimal (Kolmogorov) decomposition} for $K$.
\end{ma}

\subsection{Extreme points}

Let $G$ be a group, $X\neq\emptyset$ a $G$-space with the $G$-action $(g,x)\mapsto gx$, $\mb V$ a module over a $C^*$-algebra $\mc A$, and $U:G\to\mr{GL}_{\mc A}(\mb V)$ a representation of $G$ on $\mb V$. We also fix the maps $\a$ and $\m$ as in the preceding section. Also assume that $Z\subset X\times X$ is nonempty and let $L\in{\bf Ker}_\a^U(X)$. Denote the set of those $K\in{\bf Ker}^U_\a(X)$ such that $K_{x,y}=L_{x,y}$ for all $(x,y)\in Z$ by ${\bf Ker}_\a^U(Z,L)$. Clearly the set ${\bf Ker}_\a^U(Z,L)$ is convex.

\begin{theor}\label{Kcovarext}
Suppose that $K\in{\bf Ker}_\a^U(Z,L)$ and $(\mb M,R,\tilde U)$ is a minimal $(X,\a,U)$-covariant decomposition for $K$. If $K$ is an extreme point of the set ${\bf Ker}_\a^U(Z,L)$
then for all self-adjoint $D\in\mc L_{\mc A}(\mb M)$, the conditions $[D,\tilde U(g)]=0$ for all $g\in G$ and
\begin{equation}\label{Kextehto}
\sis{R(x)v}{DR(y)v}=0,\qquad(x,y)\in Z,\quad v\in\mb V
\end{equation}
imply $D=0$. If $K$ is regular then also the converse holds.

Suppose that $\mc A$ is a $W^*$-algebra, and let $(\mb L,R,\tilde U)$ be a $W^*$-minimal $(X,\a,U)$-covariant decomposition for $K$. The kernel $K$ is extreme in the set ${\bf Ker}_\a^U(Z,L)$ if and only if for all selfadjoint $D\in\mc L_{\mc A}(\mb L)$ the conditions $[D,\tilde U(g)]=0$ for all $g\in G$ and (\ref{Kextehto}) yield $D=0$.
\end{theor}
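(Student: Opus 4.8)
\emph{Strategy.} The plan is to prove necessity by the standard ``splitting'' argument and sufficiency by a ``Radon--Nikodym'' argument, both carried out inside the (minimal, resp.\ $W^*$-minimal) covariant Kolmogorov decomposition. Throughout I shall use the form of (iii) obtained by replacing $x$ with $gx$, namely $R(gx)=\a(g,x).\tilde U(g)R(x)U(g^{-1})$, together with the fact that $\tilde U(g)\in\mc U_{\mc A}(\mb M)$ preserves $\sis{\cdot}{\cdot}$ and that $\mc A$-linear operators commute with right multiplication by central elements.

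\emph{Necessity.} Let $K$ be extreme in ${\bf Ker}_\a^U(Z,L)$ and let $D\in\mc L_{\mc A}(\mb M)$ be self-adjoint with $[D,\tilde U(g)]=0$ for all $g$ and satisfying \eqref{Kextehto}; we must show $D=0$. Since the hypotheses on $D$ are unchanged under $D\mapsto tD$ ($t>0$), we may assume $\|D\|\leq\tfrac12$, so $I\pm D$ are positive and invertible in the $C^*$-algebra $\mc L_{\mc A}(\mb M)$, and by functional calculus $(I\pm D)^{1/2}$ are self-adjoint, invertible, and commute with every $\tilde U(g)$. Put
\[
K^\pm_{x,y}(v,w):=\sis{(I\pm D)^{1/2}R(x)v}{(I\pm D)^{1/2}R(y)w}=\sis{R(x)v}{(I\pm D)R(y)w}.
\]
Positivity of $K^\pm$ is immediate from the defining property of the Hilbert $\mc A$-module inner product, non-vanishing from invertibility of $(I\pm D)^{1/2}$ and (ii), and covariance \eqref{covacond} follows from the displayed form of (iii) together with the $\mc A$-linearity of $(I\pm D)^{1/2}$, its commutation with $\tilde U(g)$, and the invariance of $\sis{\cdot}{\cdot}$ under $\tilde U(g)$. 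Polarizing \eqref{Kextehto} in $v$ and $w$ for each fixed $(x,y)\in Z$ gives $\sis{R(x)v}{DR(y)w}=0$ on $Z$, whence $K^\pm_{x,y}=K_{x,y}=L_{x,y}$ on $Z$ and so $K^\pm\in{\bf Ker}_\a^U(Z,L)$. Since $K=\tfrac12(K^++K^-)$ and $K$ is extreme, $K^+=K^-$, i.e.\ $\sis{R(x)v}{DR(y)w}=0$ for all $x,y\in X$, $v,w\in\mb V$; by (ii) and continuity of $\sis{\cdot}{\cdot}$ this forces $D=0$.

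\emph{Sufficiency when $K$ is regular.} Assume $\mb M$ is self-dual, the stated implication holds, and $K=\tfrac12(K_1+K_2)$ with $K_1,K_2\in{\bf Ker}_\a^U(Z,L)$. Since $2K-K_1=K_2$ is a positive kernel, $K_1$ is dominated by $2K$. Take a minimal covariant decomposition $(\mb M_1,R_1,\tilde U_1)$ of $K_1$ (Theorem \ref{Kdilat}); the domination makes $\sum_jR(x_j)v_j\mapsto\sum_jR_1(x_j)v_j$ a well-defined bounded $\mc A$-linear map $\Phi$ from the dense subspace in (ii) into $\mb M_1$ with $\sis{\Phi m}{\Phi m}\leq2\sis{m}{m}$; extend it to $\Phi\in\mr{Lin}_{\mc A}(\mb M;\mb M_1)$. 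Self-duality of $\mb M$ makes $\Phi$ adjointable, so $D_1:=\Phi^*\Phi\in\mc L_{\mc A}(\mb M)$ is positive with $D_1\leq2I$ and $(K_1)_{x,y}(v,w)=\sis{R(x)v}{D_1R(y)w}$. A computation parallel to the one above, now starting from the covariance of $K_1$ and using (iii) and (ii), yields $[D_1,\tilde U(g)]=0$ for all $g$, while $(K_1)_{x,y}=L_{x,y}=K_{x,y}$ on $Z$ gives $\sis{R(x)v}{(D_1-I)R(y)v}=0$ for $(x,y)\in Z$, $v\in\mb V$. Thus $D:=D_1-I$ is self-adjoint, commutes with all $\tilde U(g)$, and satisfies \eqref{Kextehto}, so $D=0$ by hypothesis; hence $D_1=I$, $K_1=K$, and symmetrically $K_2=K$, so $K$ is extreme.

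\emph{The $W^*$-case and the main obstacle.} When $\mc A$ is a $W^*$-algebra, a $W^*$-minimal covariant decomposition $(\mb L,R,\tilde U)$ (Corollary \ref{WKdilat}) has $\mb L$ self-dual, so it plays exactly the role of a self-dual minimal decomposition and the two arguments above apply verbatim with $\mb M$ replaced by $\mb L$, giving the asserted equivalence. The only change is bookkeeping: ``dense'' in (ii) now means $\tau_1$- (resp.\ $\tau_2$-) dense in $\mb L$, so the closures ``$\sis{R(x)v}{DR(y)w}=0$ for all $x,y,v,w$ $\Rightarrow$ $D=0$'' and ``covariance of $K_1$ $\Rightarrow$ $[D_1,\tilde U(g)]=0$'' must be carried out using the $\tau_1$- (resp.\ $\tau_2$-) continuity of adjointable operators and of $\sis{\cdot}{\cdot}$ recorded in Remark \ref{vonNeumann1} rather than norm continuity. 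I expect the main obstacle to be exactly the sufficiency step of producing the dominating operator $D_1$ as an \emph{adjointable} map: this is where self-duality (regularity, resp.\ the $W^*$-hypothesis) is indispensable, since without it $\Phi$ need not be adjointable and $D_1$ need not exist --- which is precisely why the converse is claimed only in the regular/$W^*$ setting. A secondary technical nuisance is tracking the weak topologies $\tau_1,\tau_2$ in the $W^*$-case, the norm-closed span of the vectors $R(x)v$ being in general a proper subspace of $\mb L$.
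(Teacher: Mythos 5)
Your proof is correct and follows essentially the same route as the paper: the perturbation $D^\pm=I\pm D$ splitting for necessity, and the domination/Radon--Nikod\'ym argument for sufficiency, with the $W^*$-case handled by self-duality of $\mb L$. The only difference is cosmetic --- you re-derive inline (via $\Phi^*\Phi$) the existence of the intertwining positive operator for a dominated kernel, which the paper simply quotes as Lemma 1 of \cite{pe13}.
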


\begin{proof}
Let $K\in{\bf Ker}_\a^U(Z,L)$ with a minimal covariant Kolmogorov decomposition $(\mb M,R,\tilde U)$. We note that the condition \eqref{Kextehto} equals $\sis{R(x)v}{DR(y)w}=0$ for all $(x,y)\in Z$ and $v,\,w\in\mb V$ by polarization.

Assume that $D\in\mc L_{\mc A}(\mb M)$ is self-adjoint and non-zero and satisfies the conditions of the claim. Replacing $D$ by $\|D\|^{-1}D$, we may assume that $-I_{\mb M}\leq D\leq I_{\mb M}$. Define positive kernels $K^\pm$ via
$$
K_{x,y}^\pm(v,w)=\sis{R(x)v}{D^\pm R(y)w},\qquad x,\,y\in X,\quad v,\,w\in\mb V,
$$
where $D^\pm=I_{\mb M}\pm D$. For any $(x,y)\in Z$, by using \eqref{Kextehto}, one gets
\begin{eqnarray*}
K_{x,y}^\pm(v,w)&=&\sis{R(x)v}{R(y)w}\pm\sis{R(x)v}{DR(y)w}=\sis{R(x)v}{R(y)w}\\
&=&K_{x,y}(v,w)=L_{x,y}(v,w)
\end{eqnarray*}
for all $v,\,w\in\mb V$. Let $g\in G$, $x,\,y\in X$ and $v,\,w\in\mb V$. Since $\tilde U(g)^* D^\pm\tilde U(g)=D^\pm$, we may write
\begin{eqnarray*}
K_{gx,gy}^\pm(v,w)&=&\sis{R(gx)v}{D^\pm R(gy)w}\\
&=&\sis{R(x)U(g^{-1})v\a(g,x)}{\tilde U(g)^* D^\pm\tilde U(g)R(y)U(g^{-1})w\a(g,y)} \\
&=&\a(g,x)^*\sis{R(x)U(g^{-1})v}{D^\pm R(y)U(g^{-1})w}\a(g,y)\\
&=&\a(g,x)^*K_{x,y}^\pm\big(U(g^{-1})v,U(g^{-1})w\big)\a(g,y).
\end{eqnarray*}
Hence, $K^\pm\in{\bf Ker}_\a^U(Z,L)$ and, obviously, $K=\frac12K^++\frac12K^-$. Since $D\neq0$ the minimality of the decomposition $(\mb M,R,\tilde U)$ yields $K^+\neq K^-$ so that $K$ is not extreme.

Suppose that $K$ is regular and $K=\frac12K^++\frac12K^-$ with $K^\pm\in{\bf Ker}_\a^U(Z,L)$ and $K^+\neq K^-$. Now $K^\pm\leq 2K$ and, according to Lemma 1 of \cite{pe13}, there are positive operators $D^\pm\in\mc L_{\mc A}(\mb M)$ such that $K_{x,y}^\pm(v,w)=\sis{R(x)v}{D^\pm R(y)w}$ for all $x,\,y\in X$ and $v,\,w\in\mb V$. Define a self-adjoint $D=D^+-D^-\ne 0$ since $K^+\neq K^-$. For all $(x,y)\in Z$ and $v\in\mb V$ one gets $\sis{R(x)v}{DR(y)v}=K_{x,y}^+(v,v)-K_{x,y}^-(v,v)=L_{x,y}(v,v)-L_{x,y}(v,v)=0.$ Suppose that $\eta=\sum_{j=1}^nR(x_j)v_j$ is contained in the dense subspace of (ii) of Proposition \ref{Kdilat} and $g\in G$. Now
\begin{eqnarray*}
&&\sis{\eta}{\tilde U(g)^* D^\pm\tilde U(g)\eta}\\
&=&\sum_{j,k=1}^n\sis{\tilde U(g)R(x_j)v_j}{D^\pm\tilde U(g)R(x_k)v_k} \\
&=&\sum_{j,k=1}^n\sis{R(gx_j)U(g)v_j\a(g,x_j)^{-1}}
{D^\pm R(gx_k)U(g)v_k\a(g,x_k)^{-1}}\\
&=&\sum_{j,k=1}^n[\a(g,x_j)^{-1}]^*K_{gx_j,gx_k}^\pm\big(U(g)v_j,U(g)v_k\big)\a(g,x_k)^{-1}\\
&=&\sum_{j,k=1}^nK_{x_j,x_k}^\pm(v_j,v_k)=\sis{\eta}{D^\pm\eta}.
\end{eqnarray*}
Due to the density, it follows that $D^\pm=\tilde U(g)^* D^\pm\tilde U(g)$ and $[D,\tilde U(g)]=0$ for all $g\in G$.

The last claim concerning a $W^*$-minimal dilation $(\mb L,R,\tilde U)$ when $\mc A$ is a $W^*$-algebra is proven in exactly the same way as above. Note that now we have automatically a necessary and sufficient condition for extremality because $\mb L$ is self-dual.
\end{proof}

\begin{rem}\label{symrem}
{\rm
Note that when the set $Z\subset X\times X$ is symmetric, i.e.\ when $(x,y)\in Z$ then also $(y,x)\in Z$, the extremality condition of the above theorem can be made stronger: {\it Let $K\in{\bf Ker}_\a^U(Z,L)$ with $(\mb M,R,\tilde U)$. If $K$ is extreme in ${\bf Ker}_\a^U(Z,L)$ then, for any $D\in\mc L_{\mc A}(\mb M)$, the conditions $[D,\tilde U(g)]=0$ for all $g\in G$ and (\ref{Kextehto}) yield $D=0$.} Indeed, assume that $D\in\mc L_{\mc A}(\mb M)$ satisfies these conditions. Using the symmetry of $Z$, it is immediately seen that also $D^*$ satisfies these conditions and hence also the self-adjoint operators $A=\frac12(D+D^*)$ and $A'=\frac{i}{2}(D^*-D)$. If  $K$ is extreme then, according to the preceding theorem, it follows that $A=0=A'$ and $D=A+iA'=0$. We may similarly simplify the extremality condition when $Z$ is symmetric, $\mc A$ is a $W^*$-algebra and $K\in {\bf Ker}_\a^U(Z,L)$ is equipped with its $W^*$-minimal covariant Kolmogorov decomposition.
}
\end{rem}

\begin{rem}
{\rm
Note that, in the context of Theorem \ref{Kcovarext}, the regular $K$ is an extreme point of the larger convex set consisting of all positive kernels $K'$ such that $K'_{x,y}=L_{x,y}$, $(x,y)\in Z$, if and only if for all self-adjoint $D\in\mc L_{\mc A}(\mb M)$, the condition
\begin{equation}\label{Kextehto2}
\sis{R(x)v}{DR(y)v}=0,\qquad(x,y)\in Z,\quad v\in\mb V
\end{equation}
implies $D=0$. This follows from Theorem 2 of \cite{pe13} which is actually a special case of Theorem \ref{Kcovarext} above (put $G=\{e\}$). The same result applies to the $W^*$-case.
}
\end{rem}

\section{Completely positive maps}\label{CPmaps}

Denote the group of *-automorphisms of a $C^*$-algebra $\mc B$ by $\mr{Aut}(\mc B)$. In the case of a unital $\mc B$, an automorphism $\beta\in\mr{Aut}(\mc B)$ is called {\it inner} if there is a unitary $u\in\mc B$ such that $\beta(b)=ubu^*$. Denote the subgroup of inner automorphisms on $\mc B$ by $\mr{Inn}(\mc B)$. Suppose that $G$ is a group. We call group homomorphisms $G\ni g\mapsto\beta_g\in\mr{Aut}(\mc B)$ as {\it $G$-actions on $\mc B$} and homomorphisms $G\ni g\mapsto\beta_g\in\mr{Inn}(\mc B)$ as {\it inner $G$-actions on $\mc B$}.

\begin{rem}\rm \label{inner}
Suppose that $\beta$ is an inner $G$-action in a unital $C^*$-algebra $\mc B$, i.e.,\ $\beta_g(b)=u_gbu_g^*$ for all $b\in\mc B$ and $g\in G$ where $u_g\in\mc U_{\mc B}$. Without restricting generality, we may (and will) assume that $u_e=1_{\mc B}$. Since, for all $g,\,h\in G$ and $b\in\mc B$, $u_{gh}bu_{gh}^*=\beta_{gh}(b)=\beta_g\big(\beta_h(b)\big)=u_{g}u_{h}bu_{h}^*u_{g}^*$ equals $[u_{gh}^*u_{g}u_{h},b]=0$ it follows that $u_{gh}=m(g,h)u_{g}u_{h}$ where $m(g,h)\in\mc B$ is unitary and belongs to the center of $\mc B$. Moreover, $m:\,G\times G\to\mc U_{\mc B}\cap Z_{\mc B}$ is a 2-cocycle and $g\mapsto u_g$ is a $m^*$-multiplier representation with the 2-cocycle $m^*$ (defined by $m^*(g,h):=m(g,h)^*$). 
For example, if $\mc B$ is the $W^*$-algebra $\mc L(\mc K)$ of bounded operators on a Hilbert space $\mc K$ then the center is $\mb C I_\kil$ and one can consider $m$ as a $\mb T$-valued cocycle so that $g\mapsto u_g$ is a projective unitary representation of $G$ on $\kil$. Recall that any automorphism of $\lk$ is inner, $\mr{Aut}(\lk)=\mr{Inn}(\lk)$ \cite{pedersen} and thus any $G$-action on $\lk$, regardless of the group $G$, is inner.
\end{rem}

In what follows, $\mc A$ is a $C^*$-algebra, $\mb V$ is an $\mc A$-module, $\mc B$ is a unital $C^*$-algebra, $G$ is a group, $U:G\to\mr{GL}_{\mc A}(\mb V)$ is a representation and
$g\mapsto\beta_g$ is a $G$-action on $\mc B$.

\begin{ma}\label{kovarCP}
{\rm
We say that a map $S:\mc B\to S_{\mc A}(\mb V)$ is {\it completely positive (CP)} if it is linear and the map $K^S=\big[\mc B\times\mc B\ni(b,c)\mapsto K^S_{b,c}\in S_{\mc A}(\mb V)\big]$, $K^S_{b,c}(v,w):=S_{b^*c}(v,w)$ for all $b,\,c\in\mc B$ and $v,\,w\in\mb V$, is a positive kernel (see \cite[Section 4]{KSGNS}). Moreover, $S$ is {\it $(\beta,U)$-covariant} if
$$
S_{\beta_g(b)}(v,w)=S_b\big(U(g^{-1})v,U(g^{-1})w\big)
$$
for all $g\in G$, $b\in\mc B$ and $v,\,w\in\mb V$ (implying that the kernel $K^S$ is also covariant with respect the $G$-action on the $G$-space $\mc B$ and the representation $U$). We denote the set of $(\beta,U)$-covariant CP maps by ${\bf CP}_\beta^U$.
}
\end{ma}

Note that, according to the definition above, a linear map $S:\mc B\to S_{\mc A}(\mb V)$ is CP if and only if for any $n\in\mb N$, $b_1,\ldots,\,b_n\in\mc B$, and $v_1,\ldots,\,v_n\in\mb V$ one has
$$
\sum_{j,k=1}^nS_{b_j^*b_k}(v_j,v_k)\geq0.
$$
This definition is analogous with the classical definition of complete positivity for a linear map $\Phi:\mc B\to\mc L(\hil)$ between $C^*$-algebra $\mc B$ and the algebra of bounded operators on a Hilbert space $\hil$: for any $n\in\mb N$, $b_1,\ldots,\,b_n\in\mc B$, and $\f_1,\ldots,\,\f_n\in\hil$ we require that
$$
\sum_{j,k=1}^n\sis{\f_j}{\Phi(b_j^*b_k)\f_k}\geq0,
$$
or, equivalently, that the amplifications $(b_{jk})_{j,k=1}^n\mapsto (\Phi(b_{jk}))_{j,k=1}^n$ between matrix $C^*$-algebras are positive.

In physics, we are typically interested in a special class of CP maps, {\it normal CP maps}. Recall that a linear map $f:\mc B\to\mc A$, where $\mc B$ and $\mc A$ are $W^*$-algebras, is said to be normal when it is continuous with respect to the $\sigma$-weak topologies of $\mc A$ and $\mc B$ or, equivalently, whenever $\{s_\lambda\}_{\lambda\in\mc L}\subset\mc B$ is a bounded increasing net of self-adjoint elements, then $\sup_{\lambda\in\mc L}f(s_\lambda)=f\big(\sup_{\lambda\in\mc L}s_\lambda\big)$.

\begin{ma}\label{kovarNCP}\rm
In the case of $W^*$-algebras $\mc A$ and $\mc B$, we say that a map $S\in{\bf CP}_\beta^U$ is {\it normal} if, for all $v\in\mb V$, the map $\mc B\ni b\mapsto S_b(v,v)\in\mc A$ is
normal. We denote the set of normal elements of ${\bf CP}_\beta^U$ by ${\bf NCP}_\beta^U$.
\end{ma}

\begin{theor}\label{CPdilat}
Let $S\in{\bf CP}_\beta^U$. (a) There is a Hilbert $\mc A$-module $(\mb M,\sis{\,\cdot\,}{\,\cdot\,})$, a unital *-homomorphism $\pi:\mc B\to\mc L_{\mc A}(\mb M)$, a representation $\tilde U:G\to\mc U_{\mc A}(\mb M)$ and  $J\in\mr{Lin}_{\mc A}(\mb V;\mb M)$ such that
\begin{itemize}
\item[{\rm (i)}] $S_b(v,w)=\sis{Jv}{\pi(b)Jw}$ for all $b\in\mc B$ and $v,\,w\in\mb V$,
\item[{\rm (ii)}] $\mr{lin}_{\mb C}\{\pi(b)Jv\,|\,b\in\mc B,\ v\in\mb V\}$ is dense in $\mb M$,
\item[{\rm (iii)}] $JU(g)=\tilde U(g)J$ for all $g\in G$ and
\item[{\rm (iv)}] $\tilde U(g)\pi(b)=[\pi\circ\beta_g](b)\tilde U(g)$ for all $g\in G$ and $b\in\mc B$.
\end{itemize}
If $(\mb M',\pi',J',\tilde U')$ is another quadruple satisfying the conditions (i)--(iv), then there is a unitary $W\in\mr{Lin}_{\mc A}(\mb M;\mb M')$ such that $WJ=J'$, $W\pi(b)=\pi'(b)W$ for all $b\in\mc B$, and $W\tilde U(g)=\tilde U'(g)W$ for all $g\in G$.

(b) Suppose that $\mc A$ is a $W^*$-algebra. There is a self-dual Hilbert $\mc A$-module $(\mb L,\sis{\,\cdot\,}{\,\cdot\,})$ equipped with the topology $\tau_1$ (resp.\ $\tau_2$) introduced in Remark \ref{vonNeumann1}, a unital *-homomorphism $\pi:\mc B\to\mc L_{\mc A}(\mb L)$, a unitary representation $\tilde U:G\to\mc U_{\mc A}(\mb L)$ and $J\in\mr{Lin}_{\mc A}(\mb V;\mb L)$ such that (i), (iii), and (iv) of (a) hold but (ii) is replaced by
\begin{itemize}
\item[{\rm (ii)}] $\mr{lin}_{\mb C}\{\pi(b)Jv\,|\,b\in\mc B,\ v\in\mb V\}$ is $\tau_1$- (resp.\ $\tau_2$-) dense in $\mb L$.
\end{itemize}
If $(\mb L',\pi',J',\tilde U')$ is another quadruple satisfying the conditions (i)--(iv), then there is a unitary $W\in\mr{Lin}_{\mc A}(\mb L;\mb L')$ such that $WJ=J'$, $W\pi(b)=\pi'(b)W$ for all $b\in\mc B$, and $W\tilde U(g)=\tilde U'(g)W$ for all $g\in G$. 

(c) Assume that $\mc A$ and $\mc B$ are $W^*$-algebras, and $(\mb L,\pi,J,\tilde U)$ satisfies (i)--(iv) of (b). Then $S\in{\bf NCP}_\beta^U$ if and only if $\pi$ is normal.

(d) Let $(\mb M,\sis{\,\cdot\,}{\,\cdot\,})$ be a Hilbert $\mc A$-module, $\pi:\mc B\to\mc L_{\mc A}(\mb M)$ a unital *-homomorphism, $\tilde U:G\to\mc U_{\mc A}(\mb M)$ a representation, and  $J\in\mr{Lin}_{\mc A}(\mb V;\mb M)$ such that (iii) and (iv) holds. Then $S$ defined by (i) belongs to ${\bf CP}_\beta^U$.
\end{theor}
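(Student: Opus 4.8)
The plan is to view $S$ as the special covariant positive kernel $K^S$ of Definition \ref{kovarCP} living on the $G$-space $X=\mc B$ with action $g\cdot b=\beta_g(b)$, with $\a\equiv 1_{\tilde{\mc A}}$ (so the associated cocycle is $\m\equiv 1_{\tilde{\mc A}}$) and with the given representation $U$: indeed the $(\beta,U)$-covariance of $S$ is exactly the $(\mc B,1_{\tilde{\mc A}},U)$-covariance \eqref{covacond} of $K^S$. For part (a) I would pick a minimal $(\mc B,1_{\tilde{\mc A}},U)$-covariant Kolmogorov decomposition $(\mb M,R,\tilde U)$ of $K^S$ from Theorem \ref{Kdilat} ($\tilde U$ being an honest representation since $\m\equiv 1_{\tilde{\mc A}}$), set $J:=R(1_{\mc B})$, and build $\pi$ by the Stinespring device $\pi(b)\sum_j R(c_j)v_j:=\sum_j R(bc_j)v_j$ on the dense subspace of Theorem \ref{Kdilat}(ii). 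Well-definedness and $\|\pi(b)\|\le\|b\|$ come from the $C^*$-identity: writing $\|b\|^2 1_{\mc B}-b^*b=d^*d$, the difference $\|b\|^2\sis{\eta}{\eta}-\sis{\pi(b)\eta}{\pi(b)\eta}$ equals $\sum_{j,k}S_{(dc_j)^*(dc_k)}(v_j,v_k)=\sum_{j,k}K^S_{dc_j,dc_k}(v_j,v_k)\ge 0$; a parallel computation gives $\sis{\pi(b)\eta}{\eta'}=\sis{\eta}{\pi(b^*)\eta'}$, so $\pi(b)\in\mc L_{\mc A}(\mb M)$ with $\pi(b)^*=\pi(b^*)$, while multiplicativity and $\pi(1_{\mc B})=I_{\mb M}$ are read off on the dense subspace. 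Then (i) and (ii) are immediate since $\pi(b)Jv=R(b)v$; (iii) is $JU(g)v=\tilde U(g)R(\beta_{g^{-1}}(1_{\mc B}))v=\tilde U(g)Jv$; and (iv) follows by rewriting Theorem \ref{Kdilat}(iii) as $\tilde U(g)R(c)=R(\beta_g(c))U(g)$ and noting that both $\tilde U(g)\pi(b)$ and $\pi(\beta_g(b))\tilde U(g)$ send $R(c)v$ to $R(\beta_g(b)\beta_g(c))U(g)v$.

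For the uniqueness clause of (a), given a second quadruple $(\mb M',\pi',J',\tilde U')$ satisfying (i)--(iv) I would set $R'(b):=\pi'(b)J'$; then (i)--(iv) show $(\mb M',R',\tilde U')$ is again a minimal covariant Kolmogorov decomposition of $K^S$, so the intertwining unitary $W$ from Theorem \ref{Kdilat} satisfies $WJ=WR(1_{\mc B})=R'(1_{\mc B})=J'$ and $W\pi(b)=\pi'(b)W$ on the dense subspace, hence on $\mb M$. For (b) I would run part (a) and pass to the self-dual completion $\mb L:=\mb M^*$, taking $\pi$, $\tilde U$ to be the canonical extensions to $\mc L_{\mc A}(\mb M^*)$, $\mc U_{\mc A}(\mb M^*)$ of Remark \ref{vonNeumann1} and $Jv:=\widehat{Jv}$; the identities $\sis{\hat m}{\hat m'}=\sis{m}{m'}$, compatibility of the extensions with products and adjoints, and the fact that a norm-dense subset of $\mb M$ is $\tau_1$- (and $\tau_2$-) dense in $\mb M^*$ turn (i), (iii), (iv) and the density statement (ii) of (b) into restatements of the facts for $\mb M$. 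Uniqueness in (b) is the same argument using the uniqueness part of Corollary \ref{WKdilat}; the one extra point is that $W$, $\pi(b)$, $\pi'(b)$ are $\tau_1$-continuous (from $p_\f(\pi(b)\ell)\le\|b\|p_\f(\ell)$ and $p_\f(W\ell)=p_\f(\ell)$), so agreement on a $\tau_1$-dense set propagates.

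For (c), the key lemma is that for fixed $m,m'\in\mb L$ the map $\mc L_{\mc A}(\mb L)\ni B\mapsto\sis{m}{Bm'}\in\mc A$ is normal: realizing $\mc A\subseteq\mc L(\hil)$ as a von Neumann algebra and $\mb L\subseteq\mc L(\hil;\hil')$ as a $\sigma$-weakly closed submodule with $\mc L_{\mc A}(\mb L)\cong\mc B_0\subseteq\mc L(\hil')$ acting by left multiplication (Remark \ref{vonNeumann}), this map is $B_0\mapsto m^*B_0m'$, which is $\sigma$-weakly continuous since $\tr{t\,m^*B_0m'}=\tr{(m'tm^*)B_0}$ with $m'tm^*$ trace class on $\hil'$ whenever $t$ is trace class on $\hil$. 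Granting this: if $\pi$ is normal then $b\mapsto S_b(v,v)=\sis{Jv}{\pi(b)Jv}$ is a composition of normal maps, hence normal, so $S\in{\bf NCP}_\beta^U$. Conversely, if $S\in{\bf NCP}_\beta^U$, then by polarization and composition with the $\sigma$-weakly continuous $b\mapsto c_j^*bc_k$ the map $b\mapsto\sis{\eta}{\pi(b)\eta}=\sum_{j,k}\sis{Jv_j}{\pi(c_j^*bc_k)Jv_k}$ is normal for every $\eta=\sum_j\pi(c_j)Jv_j$ in the dense subspace of (ii); for a bounded increasing net $s_\lambda\nearrow s$ of self-adjoints in $\mc B$, monotone completeness of the $W^*$-algebra $\mc L_{\mc A}(\mb L)$ gives $\pi(s_\lambda)\nearrow T\le\pi(s)$, and normality of both $B\mapsto\sis{\eta}{B\eta}$ and $b\mapsto\sis{\eta}{\pi(b)\eta}$ forces $\sis{\eta}{T\eta}=\sup_\lambda\sis{\eta}{\pi(s_\lambda)\eta}=\sis{\eta}{\pi(s)\eta}$, so by polarization, $\tau_2$-density and non-degeneracy of $\sis{\cdot}{\cdot}$ we get $T=\pi(s)$; thus $\pi$ preserves the supremum and is normal.

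Finally, (d) is a direct verification: $b\mapsto S_b$ is linear, each $S_b$ obeys (S1)--(S4) because $\sis{\cdot}{\cdot}_{\mb M}$ does and $J$, $\pi(b)$ are $\mc A$-linear with $\pi(b)$ adjointable, complete positivity is $\sum_{j,k}S_{b_j^*b_k}(v_j,v_k)=\Sis{\sum_j\pi(b_j)Jv_j}{\sum_k\pi(b_k)Jv_k}\ge 0$, and covariance follows from $\pi(\beta_g(b))=\tilde U(g)\pi(b)\tilde U(g)^*$ (by (iv)) and $\tilde U(g)^*Jv=\tilde U(g^{-1})Jv=JU(g^{-1})v$ (by (iii)), giving $S_{\beta_g(b)}(v,w)=\sis{\tilde U(g)^*Jv}{\pi(b)\tilde U(g)^*Jw}=S_b\big(U(g^{-1})v,U(g^{-1})w\big)$. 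The main obstacle is part (c): parts (a), (b), (d) are essentially the covariant Stinespring/KSGNS argument together with the uniqueness of Kolmogorov decompositions, whereas the normality equivalence forces a descent to the concrete realization of $\mb M^*$ and a careful handling of the interplay of the norm-, $\tau_1$- and $\tau_2$-topologies; the one other place where genuine input is needed is the estimate $\|\pi(b)\|\le\|b\|$ in (a), which really uses the $C^*$-identity rather than mere positivity of the kernel.
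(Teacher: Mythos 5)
Your proposal is correct and follows essentially the same route as the paper: reduce to the covariant kernel $K^S$, take its minimal covariant Kolmogorov decomposition from Theorem \ref{Kdilat}, set $J=R(1_{\mc B})$ and $\pi(b)R(c)=R(bc)$, pass to the self-dual completion $\mb M^*$ for (b), and prove (c) via normality of $B\mapsto\f(\sis{\ell}{B\ell})$ together with polarization and density. The only difference is that you spell out details the paper delegates to \cite{KSGNS} (well-definedness and the bound $\|\pi(b)\|\le\|b\|$ via the $C^*$-identity) or states without proof (the normality of $B\mapsto\sis{m}{Bm'}$), and these details are correct.
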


\begin{proof}
Let $S\in{\bf CP}_\beta^U$. (a) The existence of a triplet $(\mb M,\pi,J)$ with the properties (i) and (ii) has been shown in \cite[Lemma 4.2, Theorem 4.3]{KSGNS}. Indeed, if $(\mb M,R,\tilde U)$ is a minimal $(\beta,U)$-covariant decomposition for the positive kernel $K^S$ defined in Definition \ref{kovarCP}, one can choose $J=R(1_{\mc B})$ and the *-homomorphism $\pi$ can be defined via $\pi(b)R(c)=R(bc)$ for all $b,c\in\mc B$; see the proof of Theorem 4.3 of \cite{KSGNS}. Since $R(b)U(g)=\tilde U(g)[R\circ\beta_{g^{-1}}](b)$ for all $b\in \mc B$ and $g\in G$, item (iii) follows readily:
$$
JU(g)=R(1_{\mc B})U(g)=\tilde U(g)[R\circ\beta_{g^{-1}}](1_{\mc B})=\tilde U(g)R(1_{\mc B})=\tilde U(g)J,\qquad g\in G.
$$
In addition, for all $g\in G$, $b,\,c\in\mc B$ and $v\in\mb V$,
\begin{eqnarray*}
\tilde U(g)\pi(b)\pi(c)Jv&=&\tilde U(g)R(bc)v=[R\circ\beta_g](bc)U(g)v\\
&=&[\pi\circ\beta_g](b)[R\circ\beta_g](c)U(g)v \\
&=&[\pi\circ\beta_g](b)\tilde U(g)R(c)v=[\pi\circ\beta_g](b)\tilde U(g)\pi(c)Jv
\end{eqnarray*}
so that (iv) follows from (ii). 

Let $(\mb M',\pi',J',\tilde U')$ be another quadruple satisfying the conditions (i)--(iv) and denote $R'(b):=\pi'(b)J'$ for all $b\in\mc B$. By Theorem \ref{Kdilat}, there is a  
unitary $W\in\mr{Lin}_{\mc A}(\mb M;\mb M')$ such that $W R(b)=R'(b)$, $b\in\mc B$, and $W\tilde U(g)=\tilde U'(g)W$, $g\in G$. Especially, $WJ=J'$ and, for all $b,\,c\in\mc B$ and $v\in\mb V$,
$$
W\pi(b)\pi(c)Jv=WR(bc)v=R'(bc)v=\pi'(b)\pi'(c)J'v=\pi'(b)W\pi(c)Jv
$$
showing that $W\pi(b)=\pi'(b)W$ for all $b\in\mc B$.

(b) Assume that $\mc A$ is a $W^*$-algebra and denote the self-dual Hilbert $\mc A$-module $\mb M^*$ by $\mb L$ so that $\pi$ and $\tilde U$ obtained above extend uniquely to a unital
*-homomorphism $\mc B\to\mc L_{\mc A}(\mb L)$ and, respectively, to a representation $G\to\mc U_{\mc A}(\mb L)$ which we continue to denote by $\pi$ and, respectively, by $\tilde U$. The properties (i)--(iv) are immediately seen to hold as well as the uniqueness claim. 

(c) Suppose that also $\mc B$ is a $W^*$-algebra and the quadruple $(\mb L,\pi,\tilde U,J)$ satisfies the conditions (i)--(iv) of (b). Let $\{s_\lambda\}_{\lambda\in\mc L}$ be a bounded increasing net of self-adjoint elements of $\mc B$ and let $\sup s_\lambda\in\mc B$ be its least upper bound so that  $s_\lambda\to\sup s_\lambda$ in the $\sigma$-weak topology. Since $\{\pi(s_\lambda)\}_{\lambda\in\mc L}$ is increasing and bounded by $\pi(\sup s_\lambda)$, it has the least upper bound $\sup\pi(s_\lambda)\in\mc L_{\mc A}(\mb L)$ and $\pi(s_\lambda)\to\sup\pi(s_\lambda)$ ($\sigma$-weakly). Let $\ell\in\mb L$ and $\f\in\mc A_*$. Then $\f(\sis{\ell}{\pi(s_\lambda)\ell})\to\f(\sis{\ell}{\sup\pi(s_\lambda)\ell})$ since $\mc L_{\mc A}(\mb L)\ni B\mapsto\f(\sis{\ell}{B\ell})\in\mb C$ is a positive $\sigma$-weakly continuous functional. Hence, $\sis{\ell}{\pi(s_\lambda)\ell}\to\sis{\ell}{\sup\pi(s_\lambda)\ell}$ in the $\sigma$-weak topology of $\mc A$. Especially, if $\pi:\,\mc B\to\mc L_{\mc A}(\mb L)$ is normal (i.e.\ $\sigma$-weakly continuous) then $\sup\pi(s_\lambda)=\pi\big(\sup s_\lambda\big)$ and $\mc B\ni b\mapsto S_b(v,v)=\sis{Jv}{\pi(b)Jv}\in\mc A$ is normal for all $v\in\mb V$. Suppose then that $S$ is normal. By polarization, $\sis{\ell}{\pi(s_\lambda)\ell}=\sum_{j,k=1}^n\sis{Jv_j}{\pi(b_j^*s_\lambda b_k)Jv_k}\to\sum_{j,k=1}^n\sis{Jv_j}{\pi(b_j^*\sup s_\lambda b_k)Jv_k}=\sis{\ell}{\pi(\sup s_\lambda)\ell}$ in the $\sigma$-weak topology of $\mc A$ for all $\ell=\sum_{j=1}^n\pi({b_j})Jv_j$ so that $\sis{\ell}{\sup\pi(s_\lambda)\ell}=\sis{\ell}{\pi(\sup s_\lambda)\ell}$ for all $\ell$ contained in the $\tau_1/\tau_2$-dense subset $\mr{lin}_{\mb C}\{\pi(b)Jv\,|\,b\in\mc B,\ v\in\mb V\}$ of $\mb L$. Thus, $\sup\pi(s_\lambda)=\pi\big(\sup s_\lambda\big)$ and $\pi$ is normal.

Finally, item (d) can be verified by direct calculation. 
\end{proof}

\begin{rem}\rm \label{rem:kosykliCP}
One could try to generalize the preceding notion of covariant CP maps by using a map $\a:\,G\times\mc B\to\mr{Inv}_{\mc A}\cap\mc Z_{\mc A}$ such that $\a(gh,b)=\m(g,h)\a(h,b)\a\big(g,\beta_h(b)\big)$ where $\m:\,G\times G\to\mc U_{\mc A}\cap\mc Z_{\mc A}$ is a 2-cocycle.
In this framework, one can say that a linear map $\mc B\ni b\mapsto S_b\in S_{\mc A}(\mb V)$ is a $(\a,\beta,U)$-covariant CP map if  $K^S$ is a $(\mc B,\a,U)$-covariant positive kernel. By writing $b^*c=1_{\mc B}\cdot b^*c$ one sees that \eqref{covacond} can be written as
\begin{eqnarray*}
&&\a(g,b)^*\a(g,c)S_{b^*c}(U(g^{-1})v,U(g^{-1})w)=S_{\beta_g(b^*c)}(v,w) \\
&=&\a(g,1_{\mc B})^*\a(g,b^*c)S_{b^*c}(U(g^{-1})v,U(g^{-1})w)
\end{eqnarray*}
for all $g\in G$, $b,\,c\in\mc B$, and $v,\,w\in\mb V$. Moreover, $S$ and $S\circ\beta_g$ are (positive) linear maps in equation $S_{\beta_g(c)}(v,v)=\a(g,1_{\mc B})^*\a(g,c)S_{c}(U(g^{-1})v,U(g^{-1})v)$ so that  it is reasonable to assume that $\a$ does not depend on its second argument, i.e.\ $\alpha$ can be viewed as a map from $G$ to $\mr{Inv}_{\mc A}\cap\mc Z_{\mc A}$ such that $\a(gh)=\m(g,h)\a(g)\a(h)$. But then
\begin{eqnarray*}
S_{\beta_g(b)}(v,w)&=&S_b\big(U(g^{-1})v\a(g),U(g^{-1})wa(g)\big)\\
&=&S_b\big(U'(g^{-1})v,U'(g^{-1})w\big)
\end{eqnarray*}
where $U'(g):=\a(g^{-1}).U(g)$ is a multiplier representation of $G$. If we assume that $U$ is a multiplier representation in Definition \ref{kovarCP} then the only modification in Theorem \ref{CPdilat} is that $\tilde U$ becomes a unitary multiplier representation (with the 2-cocyle of $U$) and can be extended to a unitary representation of the central extension group of $G$, see Remark \ref{central}.
\end{rem}

In the context of Theorem \ref{CPdilat} (either in the general (a)-case or in the $W^*$-case (b)), assume that the exists a $g\in G$ such that $\beta_g$ is inner, that is, there is a unitary $u_g\in\mc B$ such that $\beta_g(b)=u_gbu_g^*$. Define a unitary map
\begin{equation}\label{ovltilde}
\ovl U(g)=\pi\big(u_g^*\big)\tilde U(g)\in\mc U_{\mc A}(\mb M).
\end{equation}
Now $\ovl U(g)J=\pi\big(u_g^*\big)\tilde U(g)J=\pi\big(u_g^*\big)JU(g)$ and, for all $b\in\mc B$,
\begin{eqnarray*}
\ovl U(g)\pi(b)&=&\pi\big(u_g^*\big)\tilde U(g)\pi(b)=\pi\big(u_g^*\big)[\pi\circ\beta_g](b)\tilde U(g)\\
&=&\pi\big(u_g^*u_gbu_g^*\big)\tilde U(g)=\pi(b)\ovl U(g).
\end{eqnarray*}
Suppose that $\beta$ is an inner $G$-action, i.e.,\ $\beta_g(b)\equiv u_gbu_g^*$, and let $m:\,G\times G\to\mc U_{\mc B}\cap Z_{\mc B}$ be the 2-cocycle associated with $g\mapsto u_g$ (see, Remark \ref{inner}). Define a map
$$
M:G\times G\to\mc U_{\mc A}(\mb M)\cap Z_{\pi(\mc B)},\qquad(g,h)\mapsto M(g,h)=[\pi\circ m](g,h)
$$ 
where $Z_{\pi(\mc B)}$ is  the center of the range $\pi(\mc B)\subseteq\mc L_{\mc A}(\mb M)$ of $\pi$; especially $M$ commutes with $\ovl U$. Clearly, $M$ is a 2-cocycle. Using results derived above, one finds that
\begin{eqnarray*}
\ovl U(gh)&=&\pi(u_{gh}^*)\tilde U(gh)=\pi\big(m(g,h)^*u_h^*u_g^*\big)\tilde U(g)\tilde U(h)\\
&=&M(g,h)^*\pi(u_h^*)\pi(u_g^*)\tilde U(g)\tilde U(h)=M(g,h)^*\pi(u_h^*)\ovl U(g)\tilde U(h)\\
&=&M(g,h)^*\ovl U(g)\pi(u_h^*)\tilde U(h)=M(g,h)^*\ovl U(g)\ovl U(h)
\end{eqnarray*}
or $\ovl U(g)\ovl U(h)=M(g,h)\ovl U(gh)$ for all $g,\,h\in G$. Hence, $\ovl U$ is an $M$-multiplier representation. Immediately we get the following corollary:

\begin{cor}\label{covacoro}
Assume that $S\in{\bf CP}_\beta^U$ where $g\mapsto\beta_g$ is inner, i.e.,\ there is a map $G\ni g\mapsto u_g\in\mc U_{\mc B}$, such that $\beta_g(b)=u_gbu_g^*$ and $u_{gh}=m(g,h)u_gu_h$ where $m:\,G\times G\to\mc U_{\mc B}\cap Z_{\mc B}$ is a 2-cocycle. There exists a triple $(\mb M,\pi,J)$ of Theorem \ref{CPdilat}(a) satisfying the conditions (i) and (ii) of Theorem \ref{CPdilat}(a) and, additionally, there is an $M$-multiplier representation $\ovl U:G\to\mc U_{\mc A}(\mb M)$ with the 2-cocycle $M:\,G\times G\to\mc U_{\mc A}(\mb M)$, $(g,h)\mapsto M(g,h)=[\pi\circ m](g,h)$, such that
\begin{itemize}
\item[{\rm (iii)'}] $\pi\big(u_g^*)JU(g)=\ovl U(g)J$ for all $g\in G$,
\item[{\rm (iv)'}] $\ovl U(g)\pi(b)=\pi(b)\ovl U(g)$ for all $g\in G$ and $b\in\mc B$.
\end{itemize}
If $(\mb M',\pi',J',\ovl U')$ is another quadruple satisfying the conditions (i), (ii), (iii)' and (iv)' then there is a unitary $W\in\mr{Lin}_{\mc A}(\mb M;\mb M')$ such that $WJ=J'$, $W\pi(b)=\pi'(b)W$ for all $b\in\mc B$, and $W\ovl U(g)=\ovl U'(g)W$ for all $g\in G$.

If $\mc A$ is a $W^*$-algebra and $(\mb L,\pi,J)$ is associated to $S$ by Theorem \ref{CPdilat}(b), then the similar result as above holds when one replaces $\mb M$ by $\mb L$.
\end{cor}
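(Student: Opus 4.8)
The plan is to note that the paragraph preceding the corollary already carries out the construction: starting from a quadruple $(\mb M,\pi,J,\tilde U)$ supplied by Theorem \ref{CPdilat}(a), the operator $\ovl U(g)=\pi(u_g^*)\tilde U(g)$ was there shown to be unitary, to satisfy $\ovl U(g)J=\pi(u_g^*)JU(g)$ (this is (iii)') and $\ovl U(g)\pi(b)=\pi(b)\ovl U(g)$ (this is (iv)'), and to obey $\ovl U(g)\ovl U(h)=M(g,h)\ovl U(gh)$ with $M(g,h)=[\pi\circ m](g,h)$ unitary in $\mc L_{\mc A}(\mb M)$, central in $\pi(\mc B)$, and commuting with $\ovl U$. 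I would assemble these facts, add the trivial observation that $\ovl U(e)=\pi(u_e^*)\tilde U(e)=\pi(1_{\mc B})=I_{\mb M}$ (using $u_e=1_{\mc B}$ and unitality of $\pi$), so that $\ovl U$ is an $M$-multiplier representation of the required kind, and remark that (i) and (ii) are inherited unchanged from the same $(\mb M,\pi,J)$ of Theorem \ref{CPdilat}(a). The $W^*$-statement is obtained by repeating this starting from the quadruple $(\mb L,\pi,J,\tilde U)$ of Theorem \ref{CPdilat}(b): $\ovl U(g)$ remains unitary on $\mb L=\mb M^*$ because $\pi(u_g^*)$ and $\tilde U(g)$ are, and (ii) becomes $\tau_1$- (resp.\ $\tau_2$-) density.

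\textbf{Uniqueness.} Let $(\mb M',\pi',J',\ovl U')$ be a second quadruple of this kind and set $R(b):=\pi(b)J$, $R'(b):=\pi'(b)J'$, which by (i) and (ii) are minimal (non-covariant) Kolmogorov decompositions of the kernel $K^S$. The uniqueness already proved within Theorem \ref{CPdilat}(a) --- equivalently Theorem \ref{Kdilat} with $G=\{e\}$ --- then yields a unitary $W\in\mr{Lin}_{\mc A}(\mb M;\mb M')$ with $WR(b)=R'(b)$ for all $b$; taking $b=1_{\mc B}$ gives $WJ=J'$, and $W\pi(b)=\pi'(b)W$ follows from the density in (ii). To finish I would verify $W\ovl U(g)=\ovl U'(g)W$ directly on the dense subspace: for $\ell=\sum_j\pi(b_j)Jv_j$, using (iv)' and then (iii)' for both quadruples,
\[
W\ovl U(g)\ell=W\sum_j\pi(b_ju_g^*)JU(g)v_j=\sum_j\pi'(b_ju_g^*)J'U(g)v_j=\sum_j\pi'(b_j)\ovl U'(g)J'v_j=\ovl U'(g)W\ell,
\]
whence $W\ovl U(g)=\ovl U'(g)W$ by continuity and density. (Alternatively, one may set $\tilde U'(g):=\pi'(u_g)\ovl U'(g)$, check that $(\mb M',\pi',J',\tilde U')$ satisfies (i)--(iv) of Theorem \ref{CPdilat}(a) --- (i), (ii) unchanged, (iii) from (iii)', (iv) from (iv)' and $u_g^*u_g=1_{\mc B}$, and the homomorphism property from $u_{gh}=m(g,h)u_gu_h$ with $m(g,h)$ central --- apply the uniqueness of Theorem \ref{CPdilat}(a), and conjugate $\ovl U(g)=\pi(u_g^*)\tilde U(g)$ to obtain $W\ovl U(g)=\ovl U'(g)W$.) The $W^*$-case is handled identically with $\mb L$ in place of $\mb M$.

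\textbf{Main obstacle.} I do not expect a genuine obstacle, since all the analytic substance already resides in Theorem \ref{CPdilat} and in the computation preceding the corollary; the part demanding most attention is the cocycle bookkeeping --- that $M=\pi\circ m$ really is a (now operator-valued) $2$-cocycle central in $\pi(\mc B)$ and commuting with $\ovl U$, and, in the alternative route, that $g\mapsto\pi'(u_g)\ovl U'(g)$ is a genuine (non-projective) representation, which is exactly where $u_{gh}=m(g,h)u_gu_h$, the centrality of $m(g,h)$ in $\mc B$, and (iv)' must all be used at once. One should also track which density (norm versus $\tau_1$/$\tau_2$) is in force and read ``$M$-multiplier representation'' as the mild extension of the notion of a multiplier representation allowing an operator-valued cocycle; this is harmless, as Theorem \ref{CPdilat}(a) is only ever invoked for the ordinary representations $\tilde U$, $\tilde U'$.
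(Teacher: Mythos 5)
Your proposal is correct and follows essentially the same route as the paper, whose ``proof'' of this corollary is precisely the computation in the paragraph preceding it (unitarity of $\ovl U(g)=\pi(u_g^*)\tilde U(g)$, the identities (iii)' and (iv)', and the $M$-cocycle relation with $M=\pi\circ m$). Your uniqueness argument --- reducing to the minimal Kolmogorov decomposition of $K^S$ and then checking $W\ovl U(g)=\ovl U'(g)W$ on the dense span, or equivalently passing back to $\tilde U'(g)=\pi'(u_g)\ovl U'(g)$ and invoking the uniqueness clause of Theorem \ref{CPdilat} --- is exactly the filling-in the paper leaves implicit, and your verification is sound.
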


\begin{ma}
{\rm
Suppose that $S\in{\bf CP}_\beta^U$. We call a quadruple $(\mb M,\pi,J,\tilde U)$ satisfying conditions (i)--(iv) of Theorem \ref{CPdilat} as a {\it minimal $(\beta,U)$-covariant dilation} or {\it KSGNS (Kasparov, Stinespring, Gel'fand, Na\u{\i}mark, Segal) construction} for $S$. In the case of an inner action in Corollary \ref{covacoro},  $(\mb M,\pi,J,\ovl U)$ is also called minimal covariant dilation. In both cases, the triple $(\mb M,\pi,J)$ is simply called a {\it minimal dilation} or {\it KSGNS construction}. These minimal dilations are unique up to unitary equivalence. We say that a CP map $S\in{\bf CP}_\beta^U$ with a minimal dilation $(\mb M,\pi,J)$ is {\it regular} if $\mb M$ is self-dual. When $\mc A$ is a $W^*$-algebra and $S\in{\bf CP}_\beta^U$, we call the quadruples $(\mb L,\pi,\tilde U/\ovl U,J)$ of Theorem \ref{CPdilat} and Corollary \ref{covacoro} as {\it $W^*$-minimal $(\beta,U)$-covariant dilations} or {\it KSGNS constructions} for $S$. Again, the triple $(\mb L,\pi,J)$ is simply called a {\it $W^*$-minimal dilation} or {\it KSGNS construction}.
}
\end{ma}

\begin{rem}\rm\label{central2}
In the context of Corollary \ref{covacoro}, one can assume that the 2-cocycle $m$ is trivial, $m(g,h)\equiv1_{\mc B}$, by replacing $G$ with  the central extension group 
$G^{m^*}:=G\times(\mc U_{\mc B}\cap\mc Z_{\mc B})$ where the group product is $(g,t)(h,u):=\big(gh,m(g,h)^*tu\big)$, $g,\,h\in G$, $t,\,u\in\mc U_{\mc B}\cap\mc Z_{\mc B}$ (compare to Remark \ref{central}). Then the $m^*$-multiplier representation $g\mapsto u_g$ is replaced by the representation $(g,t)\mapsto u_gt$ and $U$ (resp.\ $\beta$) is extended to the representation $(g,t)\mapsto U'(g,t)=U(g)$ (resp.\ inner action $(g,t)\mapsto \beta'(g,t)=u_gbu_g^*=\beta(g)$). Now
\begin{eqnarray*}
S_{\beta'_{(g,t)}(b)}(v,w)&=&S_{\beta_g(b)}(v,w)=S_b\big(U(g^{-1})v,U(g^{-1})w\big)\\
&=&S_b\big(U'((g,t)^{-1})v,U'((g,t)^{-1})w\big).
\end{eqnarray*}
If $U$ is a multiplier representation as well, then we may further extend the group $G^{m^*}$ as in Remark \ref{central} so that one can restrict to ordinary representations $g\mapsto u_g$ and $U$.
\end{rem}

\subsection{Extreme points}

Let $G$ be a group, $\mc B$ a unital $C^*$-algebra with the $G$-action $g\mapsto\beta_g$, $\mb V$ a module over a $C^*$-algebra $\mc A$, and $U:G\to\mr{GL}_{\mc A}(\mb V)$ a representation of $G$ on $\mb V$. Fix a positive sesquilinear form $s_1\in S_{\mc A}(\mb V)$ and denote the set of those $S\in{\bf CP}_\beta^U$ such that $S_{1_{\mc B}}=s_1$ by ${\bf CP}_\beta^U(s_1)$. In the context of $W^*$-algebras, we denote the set of normal elements of ${\bf CP}_\beta^U(s_1)$ by ${\bf NCP}_\beta^U(s_1)$. (We always assume that ${\bf CP}_\beta^U(s_1)$ and ${\bf NCP}_\beta^U(s_1)$ are nonempty.) Clearly the above sets are convex. Since for all $S\in{\bf CP}_\beta^U(s_1)$, $g\in G$, and $v,\,w\in\mb V$ one has $s_1\big(U(g)v,U(g)w\big)=S_{1_{\mc B}}\big(U(g)v,U(g)w\big)=S_{\beta_{g^{-1}}(1_{\mc B})}(v,w)=S_{1_{\mc B}}(v,w)=s_1(v,w)$, the sesquilinear form $s_1$ has to be {\it invariant} in the sense that
$$
s_1\big(U(g)v,U(g)w\big)=s_1(v,w)
$$
for all $g\in G$ and $v,\,w\in\mb V$. The following theorem generalizes the classical extremality result \cite[Theorem 1.4.6]{arveson}.

\begin{theor}\label{Scovarext}Let $S\in{\bf CP}_\beta^U(s_1)$ with a minimal $(\beta,U)$-covariant dilation $(\mb M,\pi,J,\tilde U)$. If $S$ is an extreme point of ${\bf CP}_\beta^U(s_1)$ then, for any $D\in\mc L_{\mc A}(\mb M)$, the conditions $[D,\pi(b)]=0$ for all $b\in\mc B$, $[D,\tilde U(g)]=0$ for all $g\in G$, and
\begin{equation}\label{Sextehto}
\sis{Jv}{DJv}=0,\qquad v\in\mb V
\end{equation}
imply $D=0$. If $S$ is regular then also the converse holds.

Suppose that $\mc A$ is a $W^*$-algebra and $(\mb L,\pi,J,\tilde U)$ is a $W^*$-minimal $(\beta,U)$-covariant dilation of $S$. The map $S$ is extreme in ${\bf CP}_\beta^U(s_1)$ if and only if for any $D\in\mc L_{\mc A}(\mb L)$ the conditions $[D,\pi(b)]=0$ for all $b\in\mc B$, $[D,\tilde U(g)]=0$ for all $g\in G$ and (\ref{Sextehto}) yield $D=0$. If also $\mc B$ is a $W^*$-algebra and $S\in{\bf NCP}_\beta^U(s_1)$ then $S$ is extreme in ${\bf NCP}_\beta^U(s_1)$ if and only if $S$ is extreme in ${\bf CP}_\beta^U(s_1)$.
\end{theor}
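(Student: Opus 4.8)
The plan is to deduce the statement from the extremality theorem for the positive kernel $K^S$ (Theorem \ref{Kcovarext}), carrying along the extra datum of the unital $*$-homomorphism $\pi$. Throughout set $R(b):=\pi(b)J$, so that, as in the proof of Theorem \ref{CPdilat}(a), $(\mb M,R,\tilde U)$ (resp.\ $(\mb L,R,\tilde U)$ in the $W^*$-case) is the minimal $(\beta,U)$-covariant Kolmogorov decomposition of $K^S$ attached to the given dilation, and $R(ac)=\pi(a)R(c)$, $R(a^*b)=\pi(a)^*R(b)$.

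\textbf{Necessity.} Suppose $S$ is extreme and $D\in\mc L_{\mc A}(\mb M)$ satisfies $[D,\pi(b)]=0$ for all $b$, $[D,\tilde U(g)]=0$ for all $g$, and \eqref{Sextehto}. Taking adjoints, $D^*$ satisfies the same conditions (for \eqref{Sextehto} use $\sis{Jv}{D^*Jv}=\sis{Jv}{DJv}^*$), hence so do the self-adjoint operators $\frac{1}{2}(D+D^*)$ and $\frac{i}{2}(D^*-D)$; thus it suffices to show a self-adjoint such $D$ vanishes, and after rescaling we may assume $-I_{\mb M}\le D\le I_{\mb M}$. Put $D^\pm:=I_{\mb M}\pm D\ge0$, which commute with $\pi(\mc B)$ and $\tilde U(G)$, and define $S^\pm_b(v,w):=\sis{Jv}{D^\pm\pi(b)Jw}$. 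Then $K^{S^\pm}_{b,c}(v,w)=\sis{\pi(b)Jv}{D^\pm\pi(c)Jw}$, so $\sum_{j,k}K^{S^\pm}_{b_j,b_k}(v_j,v_k)=\sis{\xi}{D^\pm\xi}\ge0$ with $\xi=\sum_j\pi(b_j)Jv_j$, i.e.\ $S^\pm\in{\bf CP}_\beta^U$; covariance of $S^\pm$ follows from (iii)--(iv) of Theorem \ref{CPdilat} together with $[D^\pm,\tilde U(g)]=0$, by the computation used for $K^\pm_{gx,gy}$ in the proof of Theorem \ref{Kcovarext}; the polarized form of \eqref{Sextehto} gives $S^\pm_{1_{\mc B}}=s_1$; and $S=\frac{1}{2}S^++\frac{1}{2}S^-$. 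Finally $S^+=S^-$ would force $\sis{\pi(a)Jv}{D\pi(b)Jw}=\sis{Jv}{D\pi(a^*b)Jw}=0$ for all $a,b,v,w$, hence $D=0$ by density (ii). So $D\ne0$ contradicts the extremality of $S$. The same construction applies verbatim to a $W^*$-minimal dilation $(\mb L,\pi,J,\tilde U)$, with $\tau_1$- (or $\tau_2$-) density in place of norm-density.

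\textbf{Sufficiency.} Assume the displayed conditions force $D=0$ and that $S=\frac{1}{2}S^++\frac{1}{2}S^-$ with $S^\pm\in{\bf CP}_\beta^U(s_1)$ and $S^+\ne S^-$. Since $S^\mp=2S-S^\pm$ is CP, $K^{S^\pm}\le2K^S$, so by Lemma 1 of \cite{pe13} (this is where self-duality of $\mb M$, resp.\ $\mb L$, is used) there are positive $T^\pm\in\mc L_{\mc A}(\mb M)$ with $K^{S^\pm}_{b,c}(v,w)=\sis{R(b)v}{T^\pm R(c)w}$, whence $S^\pm_c(v,w)=\sis{Jv}{T^\pm\pi(c)Jw}$. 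Set $D:=T^+-T^-\ne0$, self-adjoint. Then \eqref{Sextehto} holds because $\sis{Jv}{DJv}=S^+_{1_{\mc B}}(v,v)-S^-_{1_{\mc B}}(v,v)=0$; the index-shift identity $K^{S^\pm}_{b,ac}=K^{S^\pm}_{a^*b,c}$ (both sides equal $S^\pm_{b^*ac}$) gives $\sis{R(b)v}{(T^\pm\pi(a)-\pi(a)T^\pm)R(c)w}=0$ for all $a,b,c,v,w$, so $[T^\pm,\pi(a)]=0$ by density and hence $[D,\pi(b)]=0$; and $\tilde U(g)R(b)v=R(\beta_g(b))U(g)v$ together with covariance of $S^\pm$ yields $\sis{\tilde U(g)\xi}{T^\pm\tilde U(g)\xi}=\sis{\xi}{T^\pm\xi}$ on the dense subspace of (ii), hence $[D,\tilde U(g)]=0$ as in the proof of Theorem \ref{Kcovarext}. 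This contradicts the hypothesis, so $S$ is extreme. In the $W^*$-case $\mb L$ is self-dual, so both implications hold and yield the stated equivalence with no regularity assumption.

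\textbf{Normal case.} Since ${\bf NCP}_\beta^U(s_1)$ is a convex subset of ${\bf CP}_\beta^U(s_1)$ containing $S$, extremality in the larger set trivially implies extremality in the smaller one. For the converse it suffices to show that any convex decomposition of $S$ inside ${\bf CP}_\beta^U(s_1)$ already lies in ${\bf NCP}_\beta^U(s_1)$: writing $S^\pm_c(v,w)=\sis{Jv}{T^\pm\pi(c)Jw}$ as above with $T^\pm\ge0$ in $\mc L_{\mc A}(\mb L)$, normality of $S$ makes $\pi$ normal by Theorem \ref{CPdilat}(c), and since $\mc L_{\mc A}(\mb L)\ni B\mapsto\f(\sis{Jv}{BJv})$ is $\sigma$-weakly continuous for every $\f\in\mc A_*$, $v\in\mb V$ (as recorded in the proof of Theorem \ref{CPdilat}(c)), while $B\mapsto T^\pm B$ is $\sigma$-weakly continuous on the $W^*$-algebra $\mc L_{\mc A}(\mb L)$, the composite $c\mapsto\f\big(S^\pm_c(v,v)\big)=\f\big(\sis{Jv}{T^\pm\pi(c)Jv}\big)$ is $\sigma$-weakly continuous; hence $S^\pm\in{\bf NCP}_\beta^U(s_1)$. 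I expect the one genuinely new ingredient, beyond the kernel case, to be the commutation $[T^\pm,\pi(\mc B)]=0$, obtained from the index-shift identity (the commutation with $\tilde U(G)$ being already in Theorem \ref{Kcovarext}); a secondary point needing care is the inheritance of normality by $S^\pm$, which rests on the $\sigma$-weak continuity facts established in the proof of Theorem \ref{CPdilat}(c).
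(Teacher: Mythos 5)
Your proof is correct and follows essentially the same route as the paper: reduction to self-adjoint $D$ and the perturbation $I_{\mb M}\pm D$ for necessity, and Lemma 1 of \cite{pe13} together with the index-shift identity $S^\pm_{b^*cd}=S^\pm_{(c^*b)^*d}$ for sufficiency, exactly as in the paper's outline. The only (immaterial) divergence is the normal case, where the paper argues directly that $\lambda S'\le S$ with $S$ normal forces $S'$ normal, while you deduce normality of $S^\pm$ from the representation $S^\pm_c(v,w)=\sis{Jv}{T^\pm\pi(c)Jw}$ and separate $\sigma$-weak continuity of multiplication in $\mc L_{\mc A}(\mb L)$ --- both arguments rest on the same ingredients and are sound.
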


\begin{proof}
The proof largely parallels that given for Theorem \ref{Kcovarext}, and thus we only give here a brief outline of the proof. The existence of a non-zero operator $D$ with the properties of the claim can easily be used to show the existence of a non-trivial convex decomposition. Suppose now that $S$ is regular and not extreme in ${\bf CP}_\beta^U(s_1)$, i.e.,\ $S^\pm\in{\bf CP}_\beta^U(s_1)$ for which $S^+\neq S^-$ and $S=\frac12S^++\frac12S^-$. When we view $S$ and $S^\pm$ as positive kernels and $S$ is regular, we may apply Lemma 1 of \cite{pe13} and obtain operators $D^\pm\in\mc L_{\mc A}(\mb M)$ such that $D^+\neq D^-$ and
$$
S_{b^*c}^\pm(v,w)=\sis{Jv}{\pi(b)^* D^\pm\pi(c)Jw},\qquad b,\,c\in\mc B,\quad v,\,w\in\mb V.
$$
Using the fact that $S^\pm_{b^*cd}=S^\pm_{(c^*b)^*d}$ one finds $\sis{\pi(b)Jv}{[D^\pm,\pi(c)]\pi(d)Jw}=0$ for all $v,\,w\in\mb V$ and $b,\,c,\,d\in\mc B$. Applying the minimality of the dilation $(\mb M,\pi,J,\tilde U)$, we obtain $[D^\pm,\pi(b)]=0$ for all $b\in\mc B$. It follows directly from the proof of Theorem \ref{Kcovarext} that $[D^\pm,\tilde U(g)]=0$ for all $g\in G$. For $D:=D^+-D^-\ne0$ one gets $[D,\pi(b)]=0$, $b\in\mc B$, $[D,\tilde U(g)]=0$, $g\in G$, and $\sis{Jv}{DJv}=0$ for all $v\in\mb V$.

As in the proof of Theorem \ref{Kcovarext}, the last claims concerning the $W^*$-case follow easily. To see this, note that for all $S\in{\bf NCP}_\beta^U$, $S'\in{\bf CP}_\beta^U$, and $\lambda>0$ such that $\lambda S'\leq S$ one has $S'\in{\bf NCP}_\beta^U$.
\end{proof}

\begin{rem}\rm
In the context of the preceding theorem (either in the general or $W^*$-case), assume that $S$ is extreme. Immediately we see that, for any $E\in\mc L_{\mc A}(\mb M)$ such that 
$[E,\pi(b)]=0$, $b\in\mc B$, and $[E,\tilde U(g)]=0$, $g\in G$,
$$
\sis{EJv}{EJv}=\sis{Jv}{E^*EJv}=0,\qquad v\in\mb V
$$
implies $E=0$. But $\sis{EJv}{EJv}=0$ if and only if
$$
\|EJv\|=\sqrt{\|\sis{EJv}{EJv}\|_{\mc A}}=0
$$
if and only $EJv=0$. Hence,  $EJv=0$, $v\in \mb V$, imply $E=0$ and one sees that
the intersection of the center of $\pi(\mc B)$, the center of $\tilde U(G)$ and the annihilator of the range of $J$ is trivial.
\end{rem}

\begin{rem}\label{ovlext}
{\rm
Suppose that the action $\beta$ is inner and let $S\in{\bf CP}_\beta^U(s_1)$ be regular with $(\mb M,\pi,\ovl U,J)$ of Corollary \ref{covacoro}. Immediately one gets the following variant of Theorem \ref{Scovarext}:} The map $S$ is extreme in ${\bf CP}_\beta^U(s_1)$ if and only if, for any $D\in\mc L_{\mc A}(\mb M)$, the conditions $[D,\pi(b)]=0$, $b\in\mc B$, $[D,\ovl U(g)]=0$, $g\in G$, and $\sis{Jv}{DJv}=0$ for all $v\in\mb V$ imply $D=0$. {\rm The variant related to a $W^*$-minimal dilation is obvious. Setting $G=\{e\}$ one sees that a regular CP map $S:\mc B\to S_{\mc A}(\mb V)$ with a minimal dilation $(\mb M,\pi,J)$ is extreme in the set of CP maps $T:\mc B\to S_{\mc A}(\mb V)$ such that $T_{1_{\mc B}}(v,w)=s_1(v,w)$ for all $v,\,w\in\mb V$ with a fixed $s_1\in S_{\mc A}(\mb V)$ if and only if for any $D\in\mc L_{\mc A}(\mb M)$ the conditions $[D,\pi(b)]=0$ for all $b\in\mc B$ and $\sis{Jv}{DJv}=0$ for all $v\in\mb V$ imply $D=0$. The variant of this result in the $W^*$-case is obvious.}
\end{rem}

\section{Marginal maps and subminimal dilations}\label{marginal}

In this section, $\mc A$ is a $C^*$-algebra and $\mb V$ is an $\mc A$-module. Let $\mc B$ and $\mc C$ be $W^*$-algebras so that we may define their $\sigma$-weak tensor product $\mc B\otimes\mc C$ which is a $W^*$-algebra as well. We also fix a group $G$ and $G$-actions $G\ni g\mapsto\beta_g\in\mr{Aut}(\mc B)$ and $G\ni g\mapsto\gamma_g\in\mr{Aut}(\mc C)$; these actions may well be trivial. The tensor product $\mc B\otimes\mc C$ is equipped with the $G$-action $g\mapsto\delta_g:=\beta_g\otimes\gamma_g$. We also assume that $U:G\to\mr{GL}_{\mc A}(\mb V)$ is a representation. Fix a positive $\mc A$-sesquilinear form $s_1\in S_{\mc A}(\mb V)$ such that $s_1(U(g)v,U(g)w)=s_1(v,w)$ for all $g\in G$ and $v,\,w\in\mb V$.

\begin{ma}
{\rm
Suppose that $S\in{\bf CP}_\delta^U(s_1)$. Define the {\it marginals} $S^1\in{\bf CP}_\beta^U(s_1)$ and $S^2\in{\bf CP}_\gamma^U(s_1)$ of $S$ through $S^1_b=S_{b\otimes1_{\mc C}}$, $b\in\mc B$, and $S^2_c=S_{1_{\mc B}\otimes c}$, $c\in\mc C.$
}
\end{ma}

\noindent
It is clear that the marginals $S^1$ and $S^2$ of $S\in{\bf CP}_\delta^U(s_1)$ are covariant CP maps. Thus, $S^1$ and $S^2$ have their own covariant minimal dilations according to Theorem \ref{CPdilat}. 

For any Hilbert $\mc A$-module $(\mb M,\sis{\,\cdot\,}{\,\cdot\,})$ and a $C^*$-algebra $\mc D$, we say that a linear map $F:\mc D\to\mc L_{\mc A}(\mb M)$ is completely positive if the corresponding sesquilinear-form-valued map $\mc D\ni d\mapsto S^F_d\in S_{\mc A}(\mb M)$, where $S^F_d(v,w):=\sis{v}{F(d)w}$, $v,\,w\in\mb M$, is completely positive.

\begin{prop}\label{submin}
Let $S\in{\bf CP}_\delta^U(s_1)$ and denote its marginals by $S^1$ and $S^2$. Suppose that $S^1$ is regular and $(\mb M,\pi,K,\tilde U)$ is a minimal $(\beta,U)$-covariant dilation of $S^1$. There is a unique (unital) CP map $E:\mc C\to\mc L_{\mc A}(\mb M)$ such that $\tilde U(g)E(c)=[E\circ\gamma_g](c)\tilde U(g)$, $\pi(b)E(c)=E(c)\pi(b)$ and $S_{b\otimes c}(v,w)=\sis{Kv}{\pi(b)E(c)Kw}$ for all $g\in G$, $b\in\mc B$, $c\in\mc C$ and $v,\,w\in\mb V$. Assume that $\mc A$ is a $W^*$-algebra and $(\mb L,\pi,K,\tilde U)$ is a $W^*$-minimal $(\beta,U)$-covariant dilation of $S^1$. Again, there is a unique (unital) CP map $E:\mc C\to\mc L_{\mc A}(\mb L)$ such that $\tilde U(g)E(c)=[E\circ\gamma_g](c)\tilde U(g)$, $\pi(b)E(c)=E(c)\pi(b)$ and $S_{b\otimes c}(v,w)=\sis{Kv}{\pi(b)E(c)Kw}$ for all $g\in G$, $b\in\mc B$, $c\in\mc C$ and $v,\,w\in\mb V$. Moreover, the second marginal $S^2$ is normal if and only if $E$ is normal.
\end{prop}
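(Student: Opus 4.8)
The plan is to produce $E$ by a Radon--Nikodym argument applied to the covariant minimal dilation of the first marginal $S^1$, in the spirit of the proof of Theorem \ref{Kcovarext}. Write $R(b):=\pi(b)K$, so that $(\mb M,R)$ is the minimal Kolmogorov decomposition of the positive kernel $K^{S^1}$, $K^{S^1}_{b,d}(v,w)=S^1_{b^*d}(v,w)=\sis{R(b)v}{R(d)w}$, with $\mb M$ self-dual since $S^1$ is regular. First I would fix $c\in\mc C$ with $0\le c\le1_{\mc C}$ and observe that the $S_{\mc A}(\mb V)$-valued kernel $(b,d)\mapsto S_{b^*d\otimes c}$ is positive, because $\sum_{j,k}S_{b_j^*b_k\otimes c}(v_j,v_k)=\sum_{j,k}S_{(b_j\otimes c^{1/2})^*(b_k\otimes c^{1/2})}(v_j,v_k)\ge0$, and dominated by $K^{S^1}$, since the same computation with $1_{\mc C}-c\ge0$ shows $K^{S^1}-[(b,d)\mapsto S_{b^*d\otimes c}]$ is positive. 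Lemma 1 of \cite{pe13} then yields a unique $E(c)\in\mc L_{\mc A}(\mb M)$ with $0\le E(c)\le I_{\mb M}$ and $\sis{R(b)v}{E(c)R(d)w}=S_{b^*d\otimes c}(v,w)$ for all $b,d\in\mc B$, $v,w\in\mb V$; uniqueness (equivalently, density of $\mr{lin}_{\mb C}\{R(b)v\}$) gives additivity and positive homogeneity of $c\mapsto E(c)$ on the positive cone, so $E$ extends to a bounded linear map $\mc C\to\mc L_{\mc A}(\mb M)$ satisfying this identity for every $c$.

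Next I would read off the four required properties from the defining identity. Unitality is immediate: with $c=1_{\mc C}$ it reads $\sis{R(b)v}{E(1_{\mc C})R(d)w}=\sis{R(b)v}{R(d)w}$, so $E(1_{\mc C})=I_{\mb M}$ by density. For the commutation with $\pi$, using $\pi(b)R(d)=R(bd)$ and $R(b^*a)=\pi(b)^*R(a)$ one computes
$$
\sis{R(a)v}{E(c)\pi(b)R(d)w}=S_{a^*bd\otimes c}(v,w)=\sis{R(b^*a)v}{E(c)R(d)w}=\sis{R(a)v}{\pi(b)E(c)R(d)w}
$$
for all $a,b,d$, and density in both arguments gives $[\pi(b),E(c)]=0$; combined with the $a=1_{\mc B}$ case this produces $S_{b\otimes c}(v,w)=\sis{Kv}{E(c)R(b)w}=\sis{Kv}{\pi(b)E(c)Kw}$. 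Complete positivity of $E$ is inherited from that of $S$: on vectors $m_j=R(b_j)v_j$ one has $\sum_{j,k}\sis{m_j}{E(c_j^*c_k)m_k}=\sum_{j,k}S_{(b_j\otimes c_j)^*(b_k\otimes c_k)}(v_j,v_k)\ge0$, and density handles general $m_j$. For covariance, from $KU(g)=\tilde U(g)K$ and $\tilde U(g)\pi(b)=[\pi\circ\beta_g](b)\tilde U(g)$ one gets $\tilde U(g)R(b)v=R(\beta_g(b))U(g)v$, whence a short computation using covariance of $S$ shows $\tilde U(g)^*E(\gamma_g(c))\tilde U(g)$ (which lies in $[0,I_{\mb M}]$) represents the kernel $S_{b^*d\otimes c}$ as well, so by the uniqueness in Lemma 1 of \cite{pe13} it equals $E(c)$, i.e.\ $\tilde U(g)E(c)=[E\circ\gamma_g](c)\tilde U(g)$. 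Uniqueness of $E$ among maps with these three properties follows similarly: the commutation relation of any competitor $E'$ lets one rewrite $\sis{R(b)v}{E'(c)R(d)w}=\sis{Kv}{\pi(b^*d)E'(c)Kw}=S_{b^*d\otimes c}(v,w)$, which pins $E'=E$ by the uniqueness clause of the lemma.

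The $W^*$-version of the first statement is this same argument verbatim, with $\mb M$ replaced by the self-dual module $\mb L$ of the $W^*$-minimal dilation; here no regularity of $S^1$ need be assumed, as $\mb L$ is self-dual by construction and Lemma 1 of \cite{pe13} still applies. For the final normality equivalence (where $\mc A$, hence $\mc L_{\mc A}(\mb L)$, is a $W^*$-algebra), one direction is trivial from $S^2_c(v,w)=\sis{Kv}{E(c)Kw}$. For the converse I would take a bounded increasing net $c_\lambda\uparrow c$ of self-adjoints in $\mc C$; since $E$ is positive, $\{E(c_\lambda)\}$ is increasing and bounded above by $E(c)$, so $F:=\sup_\lambda E(c_\lambda)\le E(c)$ exists in the $W^*$-algebra $\mc L_{\mc A}(\mb L)$ with $E(c_\lambda)\to F$ $\sigma$-weakly; normality of $S^2$ gives $\sis{Kv}{FKv}=\lim_\lambda S^2_{c_\lambda}(v,v)=S^2_c(v,v)=\sis{Kv}{E(c)Kv}$, hence $(E(c)-F)Kv=0$ using $E(c)-F\ge0$; since multiplications by $\pi(b)$ are $\sigma$-weakly continuous and each $E(c_\lambda)$ commutes with $\pi(b)$, so does $F$, whence $(E(c)-F)R(b)v=\pi(b)(E(c)-F)Kv=0$, and density of $\mr{lin}_{\mb C}\{R(b)v\}$ forces $F=E(c)$; thus $E(c_\lambda)\uparrow E(c)$ and $E$ is normal.

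The step I expect to be the main obstacle is precisely this last converse implication: the hypothesis supplies normality only of the single quadratic form $c\mapsto\sis{Kv}{E(c)Kv}$ on the (in general non-cyclic) range of $K$, and upgrading it to $\sigma$-weak continuity of $E$ on all of $\mb L$ requires combining the $W^*$-structure of $\mc L_{\mc A}(\mb L)$, the $\sigma$-weak continuity of the module multiplications by $\pi(b)$, and the density of $\mr{lin}_{\mb C}\{R(b)v\}$. Everything else — positivity and domination of the sliced kernels, the commutation and covariance identities, and the uniqueness of $E$ — is routine once the Radon--Nikodym lemma is in hand.
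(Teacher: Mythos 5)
Your proposal is correct and follows essentially the same route as the paper: domination of the sliced maps $b\mapsto S_{b\otimes c}$ by $\|c\|S^1$, the Radon--Nikod\'ym-type Lemma 1 of \cite{pe13} on the self-dual minimal dilation module, and then the commutation, covariance, complete positivity, unitality and uniqueness statements read off from the density of $\mr{lin}_{\mb C}\{\pi(b)Kv\}$. The only genuine variation is in the converse normality step, where you deduce $\sup_\lambda E(c_\lambda)=E(c)$ from positivity of $E(c)-\sup_\lambda E(c_\lambda)$, annihilation of the range of $K$, and commutation with $\pi$, whereas the paper instead notes that each ${}^bS_c=S_{b\otimes c}$ ($b\geq0$) is dominated by $\|b\|S^2$ and hence normal; both arguments are valid.
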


\begin{proof}
Pick a positive $c\in\mc C$ and define a CP map $S^c$ by $S^c_b:=S_{b\otimes c}$ for all $b\in\mc B$. It follows that $\|c\|S^1_b-S^c_b=S_{b\otimes(\|c\|1_{\mc C}-c)}$ for all $b\in\mc B$ and thus $S^c\leq\|c\|S^1$ since $\|c\|1_{\mc C}-c\geq0$. Hence, following the proof of Theorem \ref{Scovarext}, one see that there is a unique operator $E(c)\in\mc L_{\mc A}(\mb M)$, commuting with $\pi$, such that $S_{b\otimes c}(v,w)=\sis{Kv}{\pi(b)E(c)Kw}$ for all $b\in\mc B$ and $v,\,w\in\mb V$. Since any $c\in\mc C$ can be expressed in a unique way as a combination of its positive parts and $(\mb M,\pi,K)$ is minimal, we obtain a unique linear map $E:\mc C\to\mc L_{\mc A}(\mb M)$ such that it commutes with $\pi$ and $S_{b\otimes c}(v,w)=\sis{Kv}{\pi(b)E(c)Kw}$ for all $b\in\mc B$, $c\in\mc C$ and $v,\,w\in\mb V$.

Let $n\in\mb N$, $j\in\{1,2,.\ldots,n\}$, and $\eta_j\in\mr{lin}_{\mb C}\{\pi(b)Kv\,|\,b\in\mc B,\ v\in\mb V\}$,   i.e., there are $n_j\in\mb N$, $b_{jl}\in\mc B$ and $v_{jl}\in\mb V$, $l=1,\ldots,n_j$, such that $\eta_j=\sum_{l=1}^{n_j}\pi(b_{jl})Kv_{jl}$. Since $S$ is completely positive it follows that, for all $c_j\in\mc C$, $j=1,\ldots,n$,
\begin{eqnarray*}
\sum_{i,j=1}^n\sis{\eta_i}{E(c_i^*c_j)\eta_j}&=&\sum_{i,j=1}^n\sum_{k=1}^{n_i}\sum_{l=1}^{n_j}\sis{Kv_{ik}}{\pi(b_{ik}^*b_{jl})E(c_i^*c_j)Kv_{jl}} \\
&=&\sum_{i,j=1}^n\sum_{k=1}^{n_i}\sum_{l=1}^{n_j}S_{(b_{ik}\otimes c_i)^*(b_{jl}\otimes c_j)}(v_{ik},v_{jl})\geq0
\end{eqnarray*}
and $E$ is completely positive due to the minimality of  $(\mb M,\pi,K)$. 

Finally, for all $\eta=\sum_{k=1}^{n}\pi(b_{k})Kv_{k}\in\mr{lin}_{\mb C}\{\pi(b)Kv\,|\,b\in\mc B,\ v\in\mb V\}$, $g\in G$ and $c\in\mc C$, one finds
\begin{eqnarray*}
&&\sis{\eta}{\tilde U(g)^* E(c)\tilde U(g)\eta}\\
&=&\sum_{j,k=1}^n\sis{Kv_j}{\pi(b_j^*)\tilde U(g)^* E(c)\tilde U(g)\pi(b_k)Kv_k}\\
&=&\sum_{j,k=1}^n\sis{KU(g)v_j}{[\pi\circ\beta_g](b_j^*)E(c)[\pi\circ\beta_g](b_k)KU(g)v_k}\\
&=&\sum_{j,k=1}^n\sis{KU(g)v_j}{[\pi\circ\beta_g](b_j^*b_k)E(c)KU(g)v_k}\\
&=&\sum_{j,k=1}^nS_{\beta_g(b_j^*b_k)\otimes c}\big(U(g)v_j,U(g)v_k\big)\\
&=&\sum_{j,k=1}^nS_{\delta_g\big(b_j^*b_k\otimes\gamma_{g^{-1}}(c)\big)}\big(U(g)v_j,U(g)v_k\big)\\
&=&\sum_{j,k=1}^nS_{b_j^*b_k\otimes\gamma_{g^{-1}}(c)}(v_j,v_k)=\sis{\eta}{[E\circ\gamma_{g^{-1}}](c)\eta}
\end{eqnarray*}
and, hence, $\tilde U(g)^* E(c)\tilde U(g)=[E\circ\gamma_{g^{-1}}](c)$ or equivalently
$$
\tilde U(g)E(c)=[E\circ\gamma_g](c)\tilde U(g),\qquad g\in G,\quad c\in\mc C.
$$

When $\mc A$ is a $W^*$-algebra, the remaining claims can be proven in exactly the same way. To prove the last claim, note that for any positive $b\in\mc B$ also the map ${}^bS:\mc C\to S_{\mc A}(\mb V)$, ${}^bS_c=S_{b\otimes c}$, is bounded by $\|b\|S^2$ so that, if $S^2$ is normal, then ${}^bS$ is normal for any $b\in\mc B$. Again, for all $\ell=\sum_{j=1}^n\pi(b_j)Kv_j$ the map $c\mapsto\sis{\ell}{E(c)\ell}=\sum_{j,k=1}^n\sis{Kv_j}{\pi(b_j^*)E(c)\pi(b_k)Kv_k}=\sum_{j,k=1}^nS_{b_j^*b_k\otimes c}(v_j,v_k)$ is normal and, as in the proof of Theorem \ref{CPdilat}, it follows that $E$ is normal. The converse claim follows immediately.

Finally we note that, since $E$ is unique, $E(1_{\mc C})=I_{\mb M}$.
\end{proof}

\begin{rem}\label{ovlsubmin}
\rm
In the context of the preceding proposition, assume that the action $\beta$ is inner, i.e.,\ there is a multiplier representation $u:\,G\to\mc U_{\mc B}$ such that $\beta_g(b)\equiv u_gbu_g^*$. Assume that the first (regular) marginal $S^1$ has the minimal covariant dilation $(\mb M,\pi,K,\ovl U)$ of Corollary \ref{covacoro}. Now, there is a unique CP map $E:\mc C\to\mc L_{\mc A}(\mb M)$ such that $\ovl U(g)E(c)=[E\circ\gamma_g](c)\ovl U(g)$, $\pi(b)E(c)=E(c)\pi(b)$ and $S_{b\otimes c}(v,w)=\sis{Jv}{\pi(b)E(c)Jw}$ for all $g\in G$, $b\in\mc B$, $c\in\mc C$ and $v,\,w\in\mb V$. This result has an obvious counterpart in the $W^*$-case.
\end{rem}

\begin{ma}
{\rm
We call the collection $(\mb M,\pi,E,K,\tilde U)$ of the preceding proposition (or $(\mb M,\pi,E,K,\ovl U)$ in the case of an inner action) as an {\it $\mc A$-subminimal $(\beta,U)$-covariant dilation of $S$}. We can define the $\mc B$-subminimal dilations in the same way if $S^2$ is regular. When $\mc A$ is a $W^*$-algebra, we call $(\mb L,\pi,E,K,\tilde U)$ (or $(\mb L,\pi,E,K,\ovl U)$ in the case of an inner action) as an {\it $\mc A$-subminimal $(\beta,U)$-covariant $W^*$-dilation of $S$}.
}
\end{ma}

Note that, for a subminimal dilation $(\mb M,\pi,E,K,\tilde U)$ of $S$, the mapping $\rho:\mc B\otimes\mc C\to\mc L_{\mc A}(\mb M),\;b\otimes c\mapsto \pi(b)E(c)$ is not necessarily a *-representation. Hence, $(\mb M,\rho,K,\tilde U)$ typically fails to be a true KSGNS construction for $S$.

Let $s\in S_{\mc A}(\mb V)$, $S\in{\bf CP}_\beta^U(s_1)$, and $S'\in{\bf CP}_\gamma^U(s_1)$. If there exists a map $T\in{\bf CP}_\delta^U(s_1)$ such that $S$ and $S'$ coincide with the marginals of $T$, i.e., $S=T^1$ and $S'=T^2$, we say that $S$ and $S'$ are {\it compatible}. Moreover, $T$ is called a {\it joint map} of $S$ and $S'$. Simple modifications to the proofs presented in \cite{haapasalo2} show that if $S\in{\bf CP}_\beta^U(s_1)$ and $S'\in{\bf CP}_\gamma^U(s_1)$ are compatible and either one of them is an extreme point then their joint map $T$ is unique. Moreover, if both of the compatible maps are extreme points then also their unique joint map is extreme.

\begin{rem}\label{subminrem}\rm
Suppose that $S\in{\bf NCP}_\delta^U(s_1)$ and denote its marginals by $S^1\in{\bf NCP}_\beta^U(s_1)$ and $S^2\in{\bf NCP}_\gamma^U(s_1)$. Let $(\mb M_1,\pi_1,E_1,K_1,\tilde U_1)$ be a $\mc B$-subminimal dilation for $S$ where $(\mb M_1,\pi_1,K_1,\tilde U_1)$ is a minimal $(\beta,U)$-covariant dilation of $S^1$ and $\pi_1$ and $E_1$ are normal. Let $(\mb H_1,\sigma_1,C_1,W_1)$ be a minimal $(\gamma,\tilde U_1)$-covariant dilation of $E_1$, i.e.,\ $\mb H_1$ is a Hilbert $\mc A$-module, $\sigma_1:\mc C\to\mc L_{\mc A}(\mb H_1)$ is a normal unital *-homomorphism, $C_1:\mb M_1\to\mb H_1$ is an $\mc A$-linear isometry and $W_1:G\to\mc L_{\mc A}(\mb H_1)$ is a unitary representation such that $E_1(c)=C_1^*\sigma_1(c)C_1$ for all $c\in\mc C$, the $\mb C$-linear combinations of vectors $\sigma_1(c)C_1m$, $c\in\mc C$, $m\in\mb M_1$, form a dense subset of $\mb H_1$, $C_1\tilde U_1(g)=W_1(g)C_1$ for all $g\in G$ and $W_1(g)\sigma_1(c)=[\sigma_1\circ\gamma_g](c)W_1(g)$ for all $g\in G$ and $c\in\mc C$. Using familiar techniques, one sees that we may define a normal unital *-homomorphism $\rho_1:\mc B\to\mc L_{\mc A}(\mb H_1)$ through $\rho_1(b)\sigma_1(c)C_1m=\sigma_1(c)C_1\pi_1(b)m$ for all $b\in\mc B$, $c\in\mc C$ and $m\in\mb M_1$. Clearly, $\rho_1$ and $\sigma_1$ commute and we may typically define the unique normal unital *-homomorphism $\Pi_1:\mc B\otimes\mc C\to\mc L_{\mc A}(\mb H_1)$ such that $\Pi_1(b\otimes c)=\rho_1(b)\sigma_1(c)$ for all $b\in\mc B$ and $c\in\mc C$.\footnote{The extension of the product of two commuting normal *-representations into a normal *-representation of the tensor product $\mc B\otimes\mc C$ is not always possible but for the von Neumann algebras typically encountered in quantum theory this is possible. Especially, in the situation studied in subsequent sections where one of the two algebras is a type-I factor, the product of the commuting normal *-representations can be chosen to be the tensor product of the identity representation of the type-I factor and a normal *-representation of the other algebra thus being a normal *-representation of $\mc B\otimes\mc C$ because of the special structure of normal *-representations of type-I factors \cite[Lemma 2.2, Chapter 9]{davies}.} Since the set $\mr{lin}_{\mb C}\{\pi_1(b)K_1v\,|\,b\in\mc B,\ v\in\mb V\}$ is dense in $\mb M_1$ and $\mr{lin}_{\mb C}\{\sigma_1(c)C_1m\,|\,c\in\mc C,\ m\in\mb M_1\}$ is dense in $\mb H_1$, it follows that, by defining the $\mc A$-linear map $Y_1=C_1K_1$, the set $\mr{lin}_{\mb C}\{\Pi_1(b\otimes c)Y_1v\,|\,b\in\mc B,\ c\in\mc C,\ v\in\mb V\}=\mr{lin}_{\mb C}\{\sigma_1(c)C_1\pi_1(b)K_1v\,|\,b\in\mc B,\ c\in\mc C,\ v\in\mb V\}$ is dense in $\mb H_1$. Hence, we have obtained a minimal $(\delta,U)$-covariant dilation $(\mb H_1,\Pi_1,Y_1,W_1)$ for $S$ arising from an intermediate subminimal dilation. One easily sees that $W_1(g)\Pi_1(d)=[\Pi_1\circ\delta_g](d)W_1(g)$ for all $g\in G$ and $d\in\mc B\otimes\mc C$.

The procedure described above can be slightly varied if, e.g.,\ the action $\beta$ is inner so that we may use the representation $\ovl U_1$ of Corollary \ref{covacoro} instead of $\tilde U_1$; this is indeed what we will do later in the case of quantum instruments. Again, if also $\mc A$ is a $W^*$-algebra, the above reasoning works in the context of $W^*$-dilations.

Suppose that we apply the above `dilation in stages' method also to the $\mc C$-subminimal dilation of $S$. Thus we obtain a Hilbert $\mc A$-module $\mb H_2$, unital *-representations $\rho_2:\mc B\to\mc L_{\mc A}(\mb H_2)$ and $\sigma_2:\mc C\to\mc L_{\mc A}(\mb H_2)$ so that we may define $\Pi_2:\mc B\otimes\mc C\to\mc L_{\mc A}(\mb H_2)$, $\Pi_2(b\otimes c)=\rho_2(b)\sigma_2(c)$, a unitary representation $W_2:G\to\mc L_{\mc A}(\mb H_2)$ such that $W_2(g)\Pi_2(d)=[\Pi_2\circ\delta](d)W_2(g)$ for all $g\in G$ and $d\in\mc B\otimes\mc C$ and an $\mc A$-linear map $Y_2:\mb V\to\mb H_2$ such that $(\mb H_2,\Pi_2,W_2,Y_2)$ is a minimal $(\delta,U)$-covariant dilation for $S$. Hence, there is an ($\mc A$-linear) unitary operator $U_{12}:\,\mb H_1\to\mb H_2$ which intertwines $\Pi_1$ with $\Pi_2$ and $W_1$ with $W_2$. 
\end{rem}

\section{CP maps in quantum theory}\label{quantumCP}
The standard description for a (symmetric) measurement in quantum mechanics can be given in terms of a (covariant) instrument whose definition is the following:

\begin{ma}
\label{ma:instrumentti}
{\rm
Let $\mc V$ and $\mc K$ be Hilbert spaces and $(\Om,\Sigma,\nu)$ is a measure space, where $\nu:\Sigma\to[0,\infty]$ is $\sigma$-finite. Let $G$ be a measurable group with a measurable action on $\Om$, i.e.,\ a measurable map $G\times\Om\ni(g,\om)\mapsto g\om\in\Om$ such that $e\om=\om$ and $(gh)\om=g(h\om)$ for all $g,\,h\in G$ and $\om\in\Om$. Moreover, $\nu$ is quasi-$G$-invariant, i.e.,\ $\nu(gX)=0$ for every $g\in G$ whenever $X\in\Sigma$ is $\nu$-null. This action gives rise to a $G$-action $g\mapsto\gamma_g$, $[\gamma_g(f)](\om):=f(g^{-1}\om)$ for all $f\in L^\infty(\nu)$. Let $U:G\to\mc U(\mc V)$ be a multiplier representation of $G$ and $g\mapsto\beta_g$ an inner $G$-action on $\mc L(\mc K)$ arising from a multiplier representation $g\mapsto u_g$, $\beta_g(b)=u_gbu_g^*$ for all $g\in G$ and $b\in\mc L(\mc K)$. A normal $(\beta\otimes\gamma,U)$-covariant CP map $S:\mc L(\mc K)\otimes L^\infty(\nu)\to S_{\mb C}(\mc V)$ such that $S_{I_{\mc K}\otimes1}(v,w)=\sis{v}{w}$ for all $v,\,w\in\mc V$ is called a {\it covariant instrument} whose first marginal $S^1$ is the {\it associated channel} and the second marginal $S^2$ is the {\it associated observable}.
}
\end{ma}

\begin{rem}\label{rem:kvanttiCP}
{\rm
By denoting the characteristic function of $X\in\Sigma$ by $\chi_X$ and writing
\begin{equation}\label{eq:instr}
\sis{v}{\Gamma(b,X)w}=S_{b\otimes\chi_X}(v,w)
\end{equation}
the above implies\footnote{Also the converse holds if $\mc V$ is separable, namely any instrument $\Gamma$ is absolutely continuous with respect to the (quasi-invariant) probability measure $\nu(X)=\tr{\rho\Gamma(I_\kil,X)}$, $X\in \Sigma$, where $\rho$ is a faithful positive trace-1 operator on $\mc V$ \cite{Pe13a}. Now Equation \eqref{eq:instr} defines an instrument $S$ of Definition \ref{ma:instrumentti}.} the usual definition of an instrument \cite{Pe13a}: a $(\beta\otimes\gamma,U)$-covariant instrument is a map $\Gamma:\mc L(\mc K)\times\Sigma\to\mc L(\mc V)$ such that
\begin{enumerate}
\item for all $X\in\Sigma$, the map $\mc L(\mc K)\ni b\mapsto\Gamma(b,X)\in\mc L(\mc V)$ is CP and normal,
\item $\Gamma(I_{\mc K},\Om)=I_{\mc V}$,
\item $\tr{T\Gamma(b,\cup_{j=1}^\infty X_j)}=\sum_{j=1}^\infty\tr{T\Gamma(b,X_j)}$ for all trace-class operators $T$ on $\mc V$, all $b\in\mc L(\mc K)$, and any disjoint sequence $\{X_j\}_{j=1}^\infty\subset\Sigma$, and
\item $\Gamma(u_gbu_g^*,gX)=U(g)\Gamma(b,X)U(g)^*$ for all $g\in G$, $b\in\mc L(\mc K)$, and $X\in\Sigma$.
\end{enumerate}
In Section \ref{covariantobs}, we denote the set of such maps by $\mc I_u^U(\Om,\Sigma)$ or simply by $\mc I_u^U(\Om)$. The first marginals of these instruments are covariant quantum channels, i.e.,\ normal unital CP maps $\mc E:\mc L(\mc K)\to\mc L(\mc V)$ such that $\mc E(u_gbu_g^*)=U(g)\mc E(b)U(g)^*$ for all $g\in G$ and $b\in\mc L(\mc K)$. The second marginals of the instruments are covariant positive operator measures $\ms M:\Sigma\to\mc L(\mc V)$, i.e.,\ for any unit vector $v\in\mc V$, the map $X\mapsto\sis{v}{\ms M(X)v}$ is a probability measure and $\ms M(gX)=U(g)\ms M(X)U(g)^*$ for all $g\in G$ and $X\in\Sigma$. In section \ref{covariantobs}, we denote the set of these covariant observables by $\mc O^U(\Om,\Sigma)$ or $\mc O^U(\Om)$. If the group or the group actions are trivial then one gets, as special cases, the non-covariant instruments, channels, and observables.
}
\end{rem}

In what follows, we give generalizations for the quantum CP maps introduced above by replacing the Hilbert space $\mc V$ of Definition \ref{ma:instrumentti} by a module $\mb V$ over a $C^*$-algebra. {\it For the rest of this section including the following subsections}, we assume that $\hil$ and $\kil$ are Hilbert spaces, $\mb V$ an $\lh$-module, $G$ is a group, $U:\,G\to\mr{GL}_{\lh}(\mb V)$ is a representation, and $g\mapsto\beta_g$ a $G$-action on $\lk$. It follows that, for all $g,\,h\in G$, $b\in\lk$, $\beta_g(b)=u_gbu_g^*$ where $u_g\in\lk$ is unitary, and $u_{gh}=m(g,h)u_gu_h$ where $m$ is  a $\mb T$-valued 2-cocycle, see Remark \ref{inner}. Suppose that $s_1\in S_{\mc L(\hil)}(\mb V)$ is positive (and invariant). First, let us concentrate on a generalization of channels.

\begin{prop}\label{lemmmm}
Let $S\in{\bf NCP}_\beta^U(s_1)$. There exists a Hilbert space $\kil'$ and an $\lh$-linear map $J:\,\mathbb V\to\mc L(\hil;\kil\otimes\kil')$ such that
$$
S_b(v,w)=(Jv)^*(b\otimes I_{\kil'})(Jw)
$$
for all $b\in\lk$ and $v,\,w\in\mb V$, and ${\rm lin}_{\mb C}\{(b\otimes I_{\kil'})(Jv)\,|\,b\in\lk,\;v\in\mb V\}$ is $\sigma$-strongly dense in $\mc L(\hil;\kil\otimes\kil')$. Moreover, there is an $m$-multiplier representation $G\ni g\mapsto u'_g\in\mc U(\kil')$ such that $JU(g)=(u_g\otimes u'_g)J$ for all $g\in G$. The CP map $S$ is extreme in ${\bf NCP}_\beta^U(s_1)$ if and only if, for any $D'\in\mc L(\mc K')$, the conditions $[D',u'_g]=0$ for all $g\in G$ and $(Jv)^*(I_\kil\otimes D')(Jv)=0$ for all $v\in\mb V$ imply $D'=0$. We also have a `Kraus-type' decomposition \cite{kraus}
$$
S_b(v,w)=\sum_{\lambda\in\mc L}(\mathsf A_\lambda v)^*b(\mathsf A_\lambda w),\qquad b\in\lk,\quad v,\,w\in\mb V,
$$
where $\mc L$ is a set and $\ms A_\lambda:\mb V\to\mc L(\hil;\kil)$, $\lambda\in\mc L$, are $\mc L(\hil)$-linear maps and the series converges $\sigma$-strongly.
\end{prop}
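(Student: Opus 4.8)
The plan is to run $S$ through the $W^*$-version of the covariant KSGNS construction and then simplify everything using the fact that $\mc A=\lh$ and $\mc B=\lk$ are type-I factors. Since $\lh$ is a $W^*$-algebra, Theorem~\ref{CPdilat}(b) furnishes a self-dual Hilbert $\lh$-module $\mb L$, a unital $*$-homomorphism $\pi:\lk\to\mc L_{\lh}(\mb L)$, a unitary representation $\tilde U:G\to\mc U_{\lh}(\mb L)$ and $J\in\mr{Lin}_{\lh}(\mb V;\mb L)$ satisfying (i)--(iv); since both $\lh$ and $\lk$ are $W^*$-algebras, normality of $S$ forces $\pi$ to be normal by Theorem~\ref{CPdilat}(c). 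As $\beta$ is inner with $\beta_g(b)=u_gbu_g^*$ and $u_{gh}=m(g,h)u_gu_h$ (with $m$ necessarily $\mb T$-valued, the centre of $\lk$ being $\mb CI_\kil$), Corollary~\ref{covacoro} replaces $\tilde U$ by $\ovl U(g)=\pi(u_g^*)\tilde U(g)$, which commutes with $\pi(\lk)$ and is an $M$-multiplier representation with the scalar cocycle $M=\pi\circ m$.

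The substance of the argument is to realize this abstract dilation concretely. By Remark~\ref{vonNeumann}, $\mb L$ is unitarily a module of the form $\mc L(\hil;\hil')$ for some Hilbert space $\hil'$, with inner product $\sis{m}{m'}=m^*m'$ and $\mc L_{\lh}(\mb L)\cong\mc L(\hil')$, and by Remark~\ref{vonNeumann1} the topology $\tau_1$ on it is the $\sigma$-strong topology of $\mc L(\hil;\hil')$. Under this identification $\pi$ is a normal unital $*$-representation of the type-I factor $\lk$ on $\hil'$, so by the structure theory of such representations (\cite[Lemma~2.2, Chapter~9]{davies}) there exist a Hilbert space $\kil'$ and a unitary $\hil'\cong\kil\otimes\kil'$ carrying $\pi(b)$ to $b\otimes I_{\kil'}$. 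Consequently $J:\mb V\to\mc L(\hil;\kil\otimes\kil')$ is $\lh$-linear, property (i) of Theorem~\ref{CPdilat}(b) reads $S_b(v,w)=\sis{Jv}{\pi(b)Jw}=(Jv)^*(b\otimes I_{\kil'})(Jw)$, and (ii) becomes the asserted $\sigma$-strong density of $\mr{lin}_{\mb C}\{(b\otimes I_{\kil'})Jv\mid b\in\lk,\,v\in\mb V\}$ in $\mc L(\hil;\kil\otimes\kil')$. Since $\ovl U(g)\in\mc L(\kil\otimes\kil')$ commutes with $\lk\otimes I_{\kil'}$, the commutation theorem $(\lk\otimes I_{\kil'})'=I_\kil\otimes\mc L(\kil')$ gives $\ovl U(g)=I_\kil\otimes u'_g$ for a unique $u'_g\in\mc U(\kil')$, and $g\mapsto u'_g$ inherits from $\ovl U$ the structure of an $m$-multiplier representation; finally (iii)$'$ of Corollary~\ref{covacoro}, $\pi(u_g^*)JU(g)=\ovl U(g)J$, rewrites as $(u_g^*\otimes I_{\kil'})JU(g)=(I_\kil\otimes u'_g)J$, i.e.\ $JU(g)=(u_g\otimes u'_g)J$.

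For extremality I would apply the $W^*$-form of Theorem~\ref{Scovarext} together with its inner-action variant in Remark~\ref{ovlext}: using $(\mb L,\pi,\ovl U,J)$, the map $S$ is extreme in ${\bf CP}_\beta^U(s_1)$ iff the only $D\in\mc L_{\lh}(\mb L)=\mc L(\kil\otimes\kil')$ with $[D,\pi(b)]=0$ ($b\in\lk$), $[D,\ovl U(g)]=0$ ($g\in G$) and $\sis{Jv}{DJv}=0$ ($v\in\mb V$) is $D=0$, and since $\lk$ is a $W^*$-algebra the last part of Theorem~\ref{Scovarext} makes this the same as extremality in ${\bf NCP}_\beta^U(s_1)$. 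The first commutation relation forces $D=I_\kil\otimes D'$ with $D'\in\mc L(\kil')$, whereupon $[D,\ovl U(g)]=0$ is equivalent to $[D',u'_g]=0$ while $\sis{Jv}{DJv}=(Jv)^*(I_\kil\otimes D')(Jv)$, giving exactly the stated criterion. The Kraus decomposition then follows by fixing an orthonormal basis $\{e_\lambda\}_{\lambda\in\mc L}$ of $\kil'$ and setting $\ms A_\lambda v:=(I_\kil\otimes\la e_\lambda|)Jv\in\mc L(\hil;\kil)$, which is $\lh$-linear; expanding $Jv$ in this basis and using that $b\otimes I_{\kil'}$ is block-diagonal gives $(Jv)^*(b\otimes I_{\kil'})(Jw)=\sum_\lambda(\ms A_\lambda v)^*b(\ms A_\lambda w)$, the series converging $\sigma$-strongly (evaluate both sides at $\xi\in\hil$; the partial sums of $\sum_\lambda\ms A_\lambda v\otimes e_\lambda$ are bounded by $\|Jv\|$ and converge $\sigma$-strongly to $Jv$, and left multiplication and pairing with $(Jw)^*$ are $\sigma$-strongly continuous on bounded sets). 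The one point needing care is transporting the covariance data through the identification $\mb L\cong\mc L(\hil;\kil\otimes\kil')$: once $\pi$ is recognized as the amplification $b\mapsto b\otimes I_{\kil'}$, the commutation theorem mechanically pushes both the covariance relation and the extremality condition down to the multiplicity space $\kil'$, and no genuinely hard estimate remains.
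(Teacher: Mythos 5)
Your proposal is correct and follows essentially the same route as the paper's proof: a $W^*$-minimal covariant dilation identified with $\mc L(\hil;\kil\otimes\kil')$ via the structure of normal representations of the type-I factor $\lk$, Corollary \ref{covacoro} and the commutant relation $(\lk\otimes I_{\kil'})'=I_\kil\otimes\mc L(\kil')$ to produce $u'_g$ and reduce the extremality criterion to $\kil'$, and the Kraus decomposition from an orthonormal basis of $\kil'$. The only cosmetic difference is in how the $\sigma$-strong convergence of the Kraus series is justified (you bound partial sums of $Jv$; the paper uses the increasing net $\sum_\lambda a_\lambda^*a_\lambda\to I$), which is immaterial.
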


\begin{proof}
Let $(\mb L,\rho,J)$ be a $W^*$-minimal dilation for $S$. Since $\mb L$ is a self-dual Hilbert $\lh$-module we may choose $\mb L=\mc L(\hil;\hil')$, where $\hil'$ is a Hilbert space, and identify $\mc L_{\lh}\big(\mc L(\hil;\hil')\big)$ with $\mc L(\hil')$, see Remark \ref{vonNeumann}. Note that the $\tau_1$-topology of $\mb L$ is the $\sigma$-strong operator topology. Now $\rho:\lk\to\mc L(\mc H')$ is normal unital ${}^*$-homomorphism so that $\hil'$ can be expressed as a tensor-product space $\mc K\otimes\mc K'$, where $\mc K'$ is a Hilbert space, and $\rho(b)=b\otimes I_{\mc K'}$ for all $b\in\mc L(\mc K)$ \cite[Lemma 2.2, Chapter 9]{davies}. Furthermore, $\sis{m}{\rho(b)m'}=m^*(b\otimes I_{\mc K'})m'$ for all $m,\,m'\in\mc L(\hil;\mc K\otimes\mc K')$ and $b\in\lk$.

By combining Corollary \ref{covacoro} with Lemma \ref{lemmmm} one sees that there is a map 
$
\ovl U:G\to\mc U_{\lh}\mb(\mc L(\hil;\kil\otimes\kil')\big)\cong\mc U(\kil\otimes\kil')
$ 
such that $\big(u_g^*\otimes I_{\kil'}\big)JU(g)=\ovl U(g)J$, $g\in G$, and $\ovl U(g)(b\otimes I_{\kil'})=(b\otimes I_{\kil'})\ovl U(g)$ for all $g\in G$ and $b\in\mc L(\mc K)$ implying that there exist unitary operators $u'_g\in\mc U(\kil')$ such that $\ovl U(g)=I_\kil\otimes u'_g$ and $JU(g)=(u_g\otimes u'_g)J$ for all $g\in G$. Since $g\mapsto\tilde U(g)=\rho(u_g)\ovl U(g)=u_g\otimes u'_g$ is a group homomorphism, $g\mapsto u'_g$ is a $m$-multiplier representation, i.e.\ $u'_{gh}=\overline{m(g,h)}u'_gu'_h$ for all $g,\,h\in G$. Assume that $D\in\mc L_{\mc L(\hil)}\big(\mc L(\hil;\mc K\otimes\mc K')\big)\cong\mc L(\mc K\otimes\mc K')$ is an operator which commutes with the homomorphisms $\rho$ and $\ovl U$. Hence, $D=I_\kil\otimes D'$ where $D'\in\mc L(\kil')$ and $[D',u'_g]=0$ for all $g\in G$. The extremality characterization now follows from Remark \ref{ovlext}.

To prove the last claim, fix an orthonormal basis $\mc L$ of $\kil'$ to construct bounded operators $a_\lambda:\,\kil\otimes\kil'\to\kil$ via $\langle\psi|a_\lambda\eta\rangle:=\langle\psi\otimes \lambda|\eta\rangle$, $\psi\in\kil$, $\eta\in\kil\otimes\kil'$, where $\lambda\in\mc L$. For each $\lambda\in\mc L$, define an $\lh$-linear map $\mathsf A_\lambda:\,\mb V\to\mc L(\hil;\kil)$ by $(\mathsf A_\lambda v)\psi:=a_\lambda\big((Jv)\psi\big)$, $v\in\mb V$, $\psi\in\hil$. Since for any increasing net $(\mc L_j)_{j\in\mc J}$ of finite subsets of $\mc L$ such that $\cup_{j\in\mc J}\mc L_j=\mc L$ the bounded increasing net $\big(\sum_{\lambda\in\mc L_j}a_\lambda^*a_\lambda\big)_{j\in\mc J}$ converges $\sigma$-strongly to $I_{\mc K\otimes\mc K'}$ and $b a_\lambda\equiv  a_\lambda\rho(b)$, one sees that the Kraus-type decomposition of the claim converges in the $\sigma$-strong operator topology. Note that, if $\kil'$ is separable then $\mc L$ is countable and the above sum is countable.
\end{proof}

We say that $\kil'$ of Proposition \ref{lemmmm} is a {\it minimal ancillary space of S}. It is unique up to a unitary equivalence.

\begin{ex}\rm
Choose $\hil=\mb C$, and let $\mb V$ be a dense subspace of a Hilbert space $\mc V$ and $S:\,\lk\to S_{\lh}(\mb V)$ a normal CP map. Suppose that the sesquilinear form $S_{I_\kil}$ is bounded so that it defines a bounded positive operator $P$ on $\mc V$ via $\langle\f|P\psi\rangle=S_{I_\kil}(\f,\psi)$, $\f,\,\psi\in\mb V$. Hence, any $S_{b}$, $b\in\lk$, is also bounded and $S$ can be viewed as a normal CP map from $\lk$ to $\mc L(\mc V)$. In the case $P\le I_{\mc V}$, $S$ is referred as a quantum operation and, in its special case $P=I_{\mc V}$, as a quantum channel (in this case, $J^*J=I_{\mb V}$, i.e.\ $J$ extends to an isometry) \cite{kraus}.
\end{ex}

\begin{ex}\rm
Let $\hil$, $\kil$,  and $\kil'$ be Hilbert spaces and $\mc L$ an orthonormal basis of $\kil'$. Note that we may assume that $\mc L$ is an arbitrary set and let $\kil'$ be the sequence space $\ell^2(\mc L)$; then any $\lambda\in\mc L$ can be identified with the characteristic function of $\{\lambda\}$. Let $G\ni g\mapsto u_g\in\mc U(\kil)$ be a unitary repsentation of a group $G$, $\beta_g(b)\equiv u_gbu_g^*$, and for each $\lambda\in\mc L$, let $\Phi_\lambda:\,\lk\to\lk$ be a normal CP map which is covariant in the sense that $\Phi_\lambda(u_gbu_g^*)=u_g\Phi_\lambda(b)u_g^*$ for all $g\in G$ and $b\in\lk$. We denote $\Phi=\{\Phi_{\lambda}\}_{\lambda\in\mc L}$ and ${\bf1}=\{1_{\lambda}\}_{\lambda\in\mc L}$ where $1_\lambda(b)\equiv b$, the identity channel.

Define a vector space $\mb V$ as a set of maps $v:\,\mc L\to\mc L(\hil,\kil)$ such that $v(\lambda)\ne 0$ only for finitely many $\lambda\in\mc L$. The addition and scalar product are defined pointwisely. Equip $\mb V$ with the module product $(va)(\lambda):=v(\lambda)a$, $v\in\mb V$, $a\in\mc L(\hil)$, and $\lambda\in\mc L$. Define then $\lh$-linear maps $J:\,\mathbb V\to\mc L(\hil;\kil\otimes\kil')$ and $\mathsf A_\lambda:\,\mb V\to\mc L(\hil;\kil)$ via $(Jv)\psi=\sum_{\lambda\in\mc L}v(\lambda)\psi\otimes\lambda$ and $\mathsf A_\lambda v:=v(\lambda)$. Now $S^\Phi:\,\lk\to S_{\lh}(\mb V)$,
\begin{eqnarray*}
S^\Phi_b(v,w)&:=&\sum_{\lambda\in\mc L}v(\lambda)^*\Phi(b)w(\lambda)=\sum_{\lambda\in\mc L}(\mathsf A_\lambda v)^*\Phi(b)(\mathsf A_\lambda w)\\
&=&(Jv)^*\big(\Phi(b)\otimes I_{\kil'}\big)(Jw),
\end{eqnarray*}
for all $b\in\lk,$ $v,\,w\in\mb V$, is a normal CP map. By defining a representation $U:\,G\to\mr{GL}_{\lh}(\mb V)$ via $[U(g)v](\lambda):=u_gv(\lambda)$, $g\in G$, $v\in\mathbb V$, $\lambda\in\mc L$, one sees that, for all $g\in G$, $v\in\mathbb V$, $\psi\in\hil$,
\begin{eqnarray*}
[JU(g)v]\psi&=&\sum_{\lambda\in\mc L}u_gv(\lambda)\psi\otimes\lambda=(u_g\otimes I_{\kil'})\sum_{\lambda\in\mc L}v(\lambda)\psi\otimes\lambda\\
&=&(u_g\otimes I_{\kil'})(Jv)\psi,
\end{eqnarray*}
that is, $JU(g)=(u_g\otimes I_{\kil'})J$. Hence, $S^\Phi$ is covariant: $S^\Phi_{u_gbu_g^*}(v,w)\equiv S^\Phi_b\big(U(g^{-1})v,U(g^{-1})w\big)$. Especially, if $\Phi$ is a family of covariant channels (i.e.\ $\Phi_\lambda(I_{\kil})=I_{\kil}$) one sees that
$$
S^\Phi_{I_\kil}(v,w)=S^{\bf 1}_{I_\kil}(v,w)=s_1(v,w):=\sum_{\lambda\in\mc L}v(\lambda)^*w(\lambda).
$$
In this case, one can extend the module $\mb V$ to a Hilbert $\lh$-module $\overline{\mb V}$ which consists of maps $v$ such that $\sum_{\lambda\in\mc L}\|v(\lambda)\|^2<\infty$ (and the inner product is $s_1$). Since ${\rm lin}_{\mb C}\{(b\otimes I_{\kil'})(Jv)\,|\,b\in\lk,\;v\in\mb V\}$ is $\sigma$-strongly dense in $\mc L(\hil;\kil\otimes\kil')$, $S^{\bf1}$ is an extreme point of the convex set of all completely positive maps $S:\,\mc L(\kil)\to S_{\mc\lh}(\mb V)$ such that $S_{I_\kil}=s_1$ (and thus an extreme point of ${\bf NCP}_\beta^U(s_1)$). Indeed, suppose that $(Jv)^*(I_\kil\otimes D')(Jv)=0$ for all $v\in\mathbb V$. Thus, by polarization, $(Jv)^*(I_\kil\otimes D')(Jw)=0$ for all $v,\,w\in\mathbb V$. Let $\f\in\kil$ and $\psi\in\hil$ be unit vectors and $\lambda',\,\lambda''\in\mc L$. By choosing $v(\lambda')=|{\f}\rangle\langle{\psi}|$ and $v(\lambda)=0$, $\lambda\ne \lambda'$, and $w(\lambda'')=|{\f}\rangle\langle{\psi}|$ and $w(\lambda)=0$, $\lambda\ne \lambda''$, one gets $\langle\psi|(Jv)^*(I_\kil\otimes D')(Jw)\psi\rangle=\langle\lambda'|D'\lambda''\rangle=0$ implying $D'=0$.
\end{ex}

\subsection{Generalized quantum instruments}\label{genquantinstr}

{\it In addition to the definitions made before Proposition \ref{lemmmm}, let us, from now on, fix the following}: Let $\Omega$ be a set equipped with a $\sigma$-algebra $\Sigma$, and let $\nu:\,\Sigma\to[0,\infty]$ be a $\sigma$-finite measure. Let also $G$ be a group and assume that $\Om$ is equipped with a measurable $G$-action in the sense of Definition \ref{ma:instrumentti} with respect to which $\nu$ is quasi-invariant. Define a $G$-action $G\times L^\infty(\nu)\ni(g,f)\mapsto\gamma_gf$ as in Definition \ref{ma:instrumentti}. As before, let $\delta$ be the $G$-action $\delta_g\equiv\beta_g\otimes\gamma_g$. First we consider generalized instruments without covariance properties (Proposition \ref{propo2}) and then with symmetries (Proposition \ref{paapropo}).

\begin{prop}\label{propo2}
Let $S:\,\lk\otimes L^\infty(\nu)\to S_{\lh}(\mb V)$ be a normal CP map. There exists a Hilbert space $\hil_0$, a spectral measure $\ms F:\,\Sigma\to\mc L(\hil_0)$, and an $\lh$-linear map $Y:\,\mb V\to\mc L(\hil;\kil\otimes\hil_0)$ such that
$$
S_{b\otimes f}(v,w)=(Yv)^*\big[b\otimes\smallint f d\ms F\big](Yw)
$$ 
for all $b\in\lk$, $f\in L^\infty(\nu)$, and $v,\,w\in\mb V$, and
$$
{\rm lin}_{\mb C}\{(b\otimes \ms F(X))(Yv)\,|\,b\in\lk,\,X\in\Sigma,\,v\in\mb V\}
$$
is $\sigma$-strongly dense in $\mc L(\hil;\kil\otimes\hil_0)$.
\end{prop}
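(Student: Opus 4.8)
\emph{Proof proposal.} The plan is to extract everything from the $W^*$-minimal dilation of Theorem \ref{CPdilat}(b), exploiting that $\lk$ is a type-I factor; the first half of the argument runs parallel to the proof of Proposition \ref{lemmmm}. First I would view $S$ as a normal CP map on the unital $W^*$-algebra $\mc B:=\lk\otimes L^\infty(\nu)$ and, taking the group to be trivial, apply Theorem \ref{CPdilat}(b) and (c) to obtain a self-dual Hilbert $\lh$-module $\mb L$, a \emph{normal} unital ${}^*$-homomorphism $\rho:\mc B\to\mc L_{\lh}(\mb L)$ and $J\in\mr{Lin}_{\lh}(\mb V;\mb L)$ with $S_d(v,w)=\sis{Jv}{\rho(d)Jw}$ for all $d\in\mc B$ and such that $\mr{lin}_{\mb C}\{\rho(d)Jv\,|\,d\in\mc B,\ v\in\mb V\}$ is $\tau_1$-dense in $\mb L$. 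By Remarks \ref{vonNeumann} and \ref{vonNeumann1} I may take $\mb L=\mc L(\hil;\hil')$ for some Hilbert space $\hil'$, with $\mc L_{\lh}(\mb L)=\mc L(\hil')$, $\sis{m}{m'}=m^*m'$, and $\tau_1$ the $\sigma$-strong operator topology, so that $\rho:\mc B\to\mc L(\hil')$ is a normal unital ${}^*$-homomorphism and $S_{b\otimes f}(v,w)=(Jv)^*\rho(b\otimes f)(Jw)$.

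Secondly I would unwind the structure of $\rho$. Restricting it to the type-I factor $\lk\otimes\mb C1$, \cite[Lemma 2.2, Chapter 9]{davies} provides a Hilbert space $\hil_0$ and a unitary identification $\hil'\cong\kil\otimes\hil_0$ under which $\rho(b\otimes1)=b\otimes I_{\hil_0}$ for all $b\in\lk$. Since $1\otimes f$ commutes with every $b\otimes1$, the operator $\rho(1\otimes f)$ lies in $\{b\otimes I_{\hil_0}\,|\,b\in\lk\}'=I_\kil\otimes\mc L(\hil_0)$, hence $\rho(1\otimes f)=I_\kil\otimes\rho_0(f)$ with $f\mapsto\rho_0(f)$ a normal unital ${}^*$-homomorphism $L^\infty(\nu)\to\mc L(\hil_0)$, and therefore $\rho(b\otimes f)=\rho(b\otimes1)\rho(1\otimes f)=b\otimes\rho_0(f)$. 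Putting $\ms F(X):=\rho_0(\chi_X)$ gives a projection-valued set function with $\ms F(\Om)=I_{\hil_0}$ and $\ms F(X\cap Y)=\ms F(X)\ms F(Y)$; strong $\sigma$-additivity follows because for pairwise disjoint $X_j$ the increasing sequence $\sum_{j\le n}\chi_{X_j}=\chi_{X_1\cup\cdots\cup X_n}$ converges $\sigma$-weakly to $\chi_{\cup_jX_j}$ and $\rho_0$ is normal, while uniform approximation of $f\in L^\infty(\nu)$ by simple functions yields $\smallint f\,d\ms F=\rho_0(f)$. Taking $Y:=J$ now gives $S_{b\otimes f}(v,w)=(Yv)^*\big(b\otimes\rho_0(f)\big)(Yw)=(Yv)^*\big[b\otimes\smallint f\,d\ms F\big](Yw)$.

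The remaining, and I expect most delicate, point is the density assertion: Theorem \ref{CPdilat}(b) only yields $\tau_1$-density of the span of all $\rho(d)Jv$, $d\in\mc B$, whereas we must descend to the span over the special elements $b\otimes\chi_X$. The plan here is as follows. The linear span $\mc B_0$ of all products $b\otimes\chi_X$, $b\in\lk$, $X\in\Sigma$, is a unital ${}^*$-subalgebra of $\mc B$ which is $\sigma$-weakly dense, since the simple functions are $\sigma$-weakly dense in $L^\infty(\nu)$. Given $d\in\mc B$ with $\|d\|\le1$, Kaplansky's density theorem supplies a net $d_\lambda\in\mc B_0$ with $\|d_\lambda\|\le1$ and $d_\lambda\to d$ $\sigma$-strongly; since a normal ${}^*$-homomorphism is $\sigma$-strongly continuous on bounded sets (use $\rho(y)^*\rho(y)=\rho(y^*y)$ and normality), $\rho(d_\lambda)\to\rho(d)$ $\sigma$-strongly, and composing on the right with the bounded operator $Jv$ gives $\rho(d_\lambda)Jv\to\rho(d)Jv$ in $\tau_1$. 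Hence $\mr{lin}_{\mb C}\{\rho(d_0)Jv\,|\,d_0\in\mc B_0,\ v\in\mb V\}$ is $\tau_1$-dense in $\mb L$, and since each $\rho(d_0)Jv$ is a finite linear combination of vectors $\rho(b\otimes\chi_X)Jv=\big(b\otimes\ms F(X)\big)(Yv)$, the span $\mr{lin}_{\mb C}\{(b\otimes\ms F(X))(Yv)\,|\,b\in\lk,\,X\in\Sigma,\,v\in\mb V\}$ is $\sigma$-strongly dense in $\mc L(\hil;\kil\otimes\hil_0)$, which finishes the proof. The only routine items left are the $\lh$-linearity of $Y=J$ (inherited from $J$) and the elementary checks that $\mc B_0$ is indeed a $\sigma$-weakly dense unital ${}^*$-subalgebra.
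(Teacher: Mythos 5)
Your proof is correct, but it takes a genuinely different route from the paper's. The paper dilates in two stages: it first applies Proposition \ref{lemmmm} to the first marginal $S^1$ to obtain $(\kil',J)$ with $S^1_b(v,w)=(Jv)^*(b\otimes I_{\kil'})(Jw)$, then invokes the subminimal-dilation machinery of Proposition \ref{submin} to produce a normal CP map $E(f)=I_\kil\otimes E'(f)$ commuting with all $b\otimes I_{\kil'}$; since $E'$ is then only a normalized positive operator measure $\ms E'$ on $\kil'$, a further minimal Na\u{\i}mark dilation $(\hil_0,\ms F,K)$ of $\ms E'$ is required, and finally $Y=(I_\kil\otimes K)J$. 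You instead dilate the whole algebra $\lk\otimes L^\infty(\nu)$ in one shot via Theorem \ref{CPdilat}(b)--(c), so the second tensor factor is represented by a normal $*$-homomorphism from the outset and the spectral measure $\ms F(X)=\rho_0(\chi_X)$ drops out immediately with no Na\u{\i}mark step; the type-I decomposition $\hil'\cong\kil\otimes\hil_0$ with $\rho(b\otimes 1)=b\otimes I_{\hil_0}$ together with the commutant computation replaces the marginal analysis. Your Kaplansky-plus-normality argument for the density is also more explicit than the paper's, which leaves the density of the span of the $(b\otimes\ms F(X))(Yv)$ implicit in the minimality of the two constituent dilations. What the paper's longer route buys is the intermediate data --- the minimal ancillary space $\kil'$ of $S^1$, the POVM $\ms E'$ and the isometry $K$ --- which are reused in Propositions \ref{paapropo} and \ref{prop:Ckiet}; your one-shot construction is the more economical proof of the proposition as stated, and by the uniqueness part of Theorem \ref{CPdilat} it produces a dilation unitarily equivalent to the paper's.
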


\begin{proof}
Note that the first marginal $S^1$ is CP and normal. It follows from Proposition \ref{lemmmm} that there exists a Hilbert space $\kil'$ and an $\lh$-linear map $J:\,\mathbb V\to\mc L(\hil;\kil\otimes\kil')$ such that $S^1_b(v,w)=(Jv)^*(b\otimes I_{\kil'})(Jw)$ for all $b\in\lk$ and $v,\,w\in\mb V$. In addition, the set ${\rm lin}_{\mb C}\{(b\otimes I_{\kil'})(Jv)\,|\,b\in\lk,\;v\in\mb V\}$ is $\sigma$-strongly dense in $\mc L(\hil;\kil\otimes\kil')$. By Proposition \ref{submin} and Remark \ref{vonNeumann1}, there is a unique normal CP map $E:\,L^\infty(\nu)\to\mc L_{\lh}\big(\mc L(\hil;\kil\otimes\kil')\big)\cong \mc L(\kil\otimes\kil')$ such that, for each $b\in\lk$ and $f\in L^\infty(\nu)$, $(b\otimes I_{\kil'})E(f)=E(f)(b\otimes I_{\kil'})$, that is, $E(f)=I_\kil\otimes E'(f)$ where $L^\infty(\nu)\ni f\mapsto E'(f)\in\mc L(\kil')$ is CP and normal. Hence, $E'$ is uniquely determined by the normalized positive operator measure $\ms E':\,\Sigma\to\mc L(\mc K')$, $X\mapsto\ms E'(X):=E'(\chi_X)$ where $\chi_X$ is the characteristic function of $X$, and  $\ms E'$ is absolutely continuous with respect to $\nu$ (i.e.,\ $\nu(X)=0$ implies $\ms E(X)=0$) and $E'(f)$ is the operator integral $\int_\Omega f(\omega)d\ms E'(\omega)$ for all $f\in L^\infty(\nu)$. Furthermore, $S_{b\otimes f}(v,w)=(Jv)^*[b\otimes E'(f)](Jw)$ for all $b\in\lk$, $f\in L^\infty(\nu)$ and $v,\,w\in\mb V$. Let $(\hil_0,F,K)$ be the minimal Na\u{\i}mark dilation of $E'$, that is, $\hil_0$ is a Hilbert space, $F:\,L^\infty(\nu)\to\mc L(\hil_0)$ is a normal unital ${}^*$-homomorphism determined by the spectral measure $\ms F:\,\Sigma\to\mc L(\hil_0)$ via $F(f)\equiv\int_\Omega f(\omega)d\ms F(\omega)$, and $K:\,\kil'\to\hil_0$ is an isometry such that $E'(f)=K^*F(f)K$ for all $f\in L^\infty(\nu)$. In addition, ${\rm lin}_{\mb C}\{F(f)K\psi\,|\,f\in L^\infty(\nu),\,\psi\in\kil'\}$ is dense in $\hil_0$. By denoting $Y:=(I_{\kil}\otimes K)J$, we have proven the claim.
\end{proof}

From now on, we denote by $\rho_g$ the Radon-Nikod\'ym derivative $d\nu_g/d\nu$ of the translated measure $\nu_g(X)=\nu(gX)$ with respect to $\nu$.

\begin{prop}\label{paapropo}
Let $S\in{\bf NCP}_\delta^U(s_1)$ be such that a minimal ancillary space of the first marginal $S^1$ is separable. There is a direct integral Hilbert space $\hil_\oplus=\int_\Om^\oplus\hil_{n(\om)}\,d\nu(\om)$ such that $n(g\omega)=n(\omega)$ for all $g\in G$ and $\omega\in\Om$, and an $\mc L(\hil)$-linear map $Y:\mb V\to\mc L(\hil;\mc K\otimes\hil_\oplus)$ such that
$$
S_{b\otimes f}(v,w)=(Yv)^*\big(b\otimes\hat f\,\big)(Yw),\qquad b\in\mc L(\mc K),\quad f\in L^\infty(\nu),\quad v,\,w\in\mb V,
$$
and the $\mb C$-linear combinations of vectors of the form $(b\otimes\hat f)(Yv)$, $b\in\mc L(\mc K)$, $f\in L^\infty(\nu)$, $v\in\mb V$, form a $\sigma$-strongly dense subspace of $\mc L(\hil;\mc K\otimes\hil_\oplus)$. For any $g\in G$, there is a weakly $\nu$-measurable field $\om\mapsto y(g,\om)$ of unitary operators $y(g,\om):\hil_{n(\om)}\to\hil_{n(\om)}$ such that, when one defines $y_g\in\mc U(\hil_\oplus)$ through
\begin{equation}\label{eq:ygpsi}
(y_g\psi)(\om)=y(g,\om)\psi(g^{-1} \om)\sqrt{\rho_{g^{-1}}(\om)}
\end{equation}
for all $\psi\in\hil_\oplus$ and $\nu$-a.a.\ $\om\in\Om$, one has
$$
YU(g)=(u_g\otimes y_g)Y,\qquad g\in G.
$$
Furthermore, for all $g,\,h\in G$ and $\nu$-a.a.\ $\om\in\Om$
$$
y(gh,\om)=\ovl{m(g,h)}y(g,\om)y(h,g^{-1} \om),
$$
where $m:G\times G\to\mb T$ is the 2-cocycle associated with the multiplier representation $g\mapsto u_g$, and $g\mapsto u_g\otimes y_g$ is a unitary representation. Finally, $S$ is an extreme point of ${\bf NCP}_\delta^U(s_1)$ if and only if, for any decomposable $D=\int_\Omega^\oplus D(\omega)d\nu\in\mc L(\hil_\oplus)$, the conditions $(Yv)^*\big[I_{\kil}\otimes D\big](Yv)\equiv 0$ and $[D(\omega),y(g,\omega)]=0$ for all $g\in G$ and $\omega\in\Omega\setminus{N_g}$, where $N_g\subset\Omega$ is $\nu$-zero measurable, implies $D=0$.
\end{prop}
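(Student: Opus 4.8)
The plan is to strip $S$ of its three constituents — the $\lk$-factor, the $L^\infty(\nu)$-factor, and the $G$-symmetry — one at a time using the minimal and subminimal dilation machinery of the preceding sections, and then to impose on the Na\u{\i}mark space that emerges the direct-integral form of a normal representation of $L^\infty(\nu)$. Concretely, I would first apply Proposition \ref{lemmmm} to the first marginal $S^1\in{\bf NCP}_\beta^U(s_1)$: since a minimal ancillary space $\kil'$ of $S^1$ is separable, this yields an $\lh$-linear $J:\mb V\to\mc L(\hil;\kil\otimes\kil')$ with $S^1_b(v,w)=(Jv)^*(b\otimes I_{\kil'})(Jw)$, the $\sigma$-strong density of $\mr{lin}_{\mb C}\{(b\otimes I_{\kil'})Jv\}$ in $\mc L(\hil;\kil\otimes\kil')$, and an $m$-multiplier representation $g\mapsto u'_g\in\mc U(\kil')$ with $JU(g)=(u_g\otimes u'_g)J$; moreover $(\mc L(\hil;\kil\otimes\kil'),\rho,J)$ with $\rho(b)=b\otimes I_{\kil'}$, together with $\ovl U(g)=I_\kil\otimes u'_g$, is the $W^*$-minimal inner-action covariant dilation of $S^1$ of Corollary \ref{covacoro}.

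Next I would feed this into the inner-action subminimal dilation, i.e.\ the $W^*$-version of Proposition \ref{submin} in the form of Remark \ref{ovlsubmin}, with $\mc C=L^\infty(\nu)$. This produces a unique normal unital CP map $E:L^\infty(\nu)\to\mc L_{\lh}\big(\mc L(\hil;\kil\otimes\kil')\big)\cong\mc L(\kil\otimes\kil')$ which commutes with $\rho$, intertwines $\ovl U$ with $\gamma$ (i.e.\ $\ovl U(g)E(f)=[E\circ\gamma_g](f)\ovl U(g)$), and satisfies $S_{b\otimes f}(v,w)=(Jv)^*\big[(b\otimes I_{\kil'})E(f)\big](Jw)$; normality of $E$ is equivalent to normality of $S^2$ and hence holds. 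Since $E$ commutes with $\rho(\lk)=\lk\otimes I_{\kil'}$, it has the form $E(f)=I_\kil\otimes E'(f)$ for a normal unital CP map $E':L^\infty(\nu)\to\mc L(\kil')$, and the intertwining relation becomes $u'_gE'(f)=E'(\gamma_gf)u'_g$ for all $g\in G$.

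The core step is a covariant Na\u{\i}mark dilation of $E'$. I would take its minimal Na\u{\i}mark dilation $(\hil_0,F,K)$ — $F$ a normal unital $*$-representation of $L^\infty(\nu)$ with spectral measure $\ms F$, $K:\kil'\to\hil_0$ an isometry, $E'(f)=K^*F(f)K$, and $\mr{lin}_{\mb C}\{F(f)K\psi\}$ dense in $\hil_0$ — and, by minimality together with the covariance of $E'$, define a unitary $w_g\in\mc U(\hil_0)$ through $w_gF(f)K\psi:=F(\gamma_gf)Ku'_g\psi$. A routine inner-product computation (as in the proofs of Theorems \ref{Kdilat} and \ref{Scovarext}) shows $w_g$ is well defined, isometric and surjective, and gives $w_gF(f)w_g^*=F(\gamma_gf)$, $w_gK=Ku'_g$, and $w_{gh}=\ovl{m(g,h)}\,w_gw_h$; in particular $w_g\ms F(X)w_g^*=\ms F(gX)$. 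Because $\kil'$ is separable the Na\u{\i}mark space has separable fibres, so, $F$ being a normal representation of $L^\infty(\nu)$, one identifies (Remark \ref{vonNeumann2}) $\hil_0$ with a direct integral $\int_\Om^\oplus\hil_{n(\om)}\,d\nu(\om)$ on which $F(f)=\hat f$; covariance of $\ms F$ forces $n\circ g=n$ $\nu$-a.e.\ for every $g$, and after a $\nu$-null modification we may take $n$ genuinely $G$-invariant. Since $w_g$ implements $\hat f\mapsto\widehat{\gamma_gf}$ on the diagonal algebra and carries the fibre over $g^{-1}\om$ onto the fibre over $\om$, the standard structure theory of such unitaries (the imprimitivity-type analysis of Mackey) yields a weakly $\nu$-measurable field of unitaries $y(g,\om):\hil_{n(g^{-1}\om)}=\hil_{n(\om)}\to\hil_{n(\om)}$ and a positive scalar whose square is forced by unitarity to be the Radon--Nikod\'ym derivative $\rho_{g^{-1}}$, i.e.\ $w_g$ has exactly the form \eqref{eq:ygpsi} with $y_g:=w_g$. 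Putting $Y:=(I_\kil\otimes K)J$, the identity $E'(f)=K^*F(f)K$ gives $S_{b\otimes f}(v,w)=(Yv)^*(b\otimes\hat f)(Yw)$, and $YU(g)=(I_\kil\otimes K)(u_g\otimes u'_g)J=(u_g\otimes w_gK)J=(u_g\otimes y_g)Y$ from $w_gK=Ku'_g$. Inserting \eqref{eq:ygpsi} into $w_{gh}=\ovl{m(g,h)}w_gw_h$, comparing fibre operators and using the chain rule $\rho_{(gh)^{-1}}(\om)=\rho_{h^{-1}}(g^{-1}\om)\,\rho_{g^{-1}}(\om)$, one gets $y(gh,\om)=\ovl{m(g,h)}\,y(g,\om)\,y(h,g^{-1}\om)$ $\nu$-a.e.; combined with $u_{gh}=m(g,h)u_gu_h$ the two $\mb T$-valued cocycles cancel in $(u_g\otimes y_g)(u_h\otimes y_h)=u_{gh}\otimes y_{gh}$, so $g\mapsto u_g\otimes y_g$ is a genuine unitary representation. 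Finally, the $\sigma$-strong density of $\mr{lin}_{\mb C}\{(b\otimes\hat f)Yv\}$ in $\mc L(\hil;\kil\otimes\hil_\oplus)$ follows exactly as in the proof of Proposition \ref{propo2} (equivalently, by the dilation-in-stages argument of Remark \ref{subminrem}), so $\big(\mc L(\hil;\kil\otimes\hil_\oplus),\,b\otimes f\mapsto b\otimes\hat f,\,Y,\,g\mapsto u_g\otimes y_g\big)$ is a $W^*$-minimal $(\delta,U)$-covariant dilation of $S$ in the sense of Theorem \ref{CPdilat}(b).

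The extremality criterion is then read off from Theorem \ref{Scovarext}: since $\lh$ and $\lk\otimes L^\infty(\nu)$ are $W^*$-algebras, $S$ is extreme in ${\bf NCP}_\delta^U(s_1)$ iff it is extreme in ${\bf CP}_\delta^U(s_1)$, which, applied to the $W^*$-minimal dilation just built, holds iff every $D\in\mc L(\kil\otimes\hil_\oplus)$ with $[D,b\otimes\hat f]=0$ for all $b,f$, $[D,u_g\otimes y_g]=0$ for all $g$, and $(Yv)^*D(Yv)=0$ for all $v$, vanishes. Commuting with all $b\otimes I_{\hil_\oplus}$ forces $D=I_\kil\otimes D_0$; commuting then with all $I_\kil\otimes\hat f$ forces $D_0$ decomposable, $D_0=\int_\Om^\oplus D_0(\om)\,d\nu(\om)$ (Remark \ref{vonNeumann2}); and, using the explicit form \eqref{eq:ygpsi} of $y_g$, the relation $[I_\kil\otimes D_0,u_g\otimes y_g]=0$ unwinds fibrewise and $\nu$-almost everywhere into the stated condition on $D_0(\om)$ and $y(g,\om)$, while $(Yv)^*(I_\kil\otimes D_0)(Yv)=0$ is the remaining condition. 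I expect the genuine obstacle to be the covariant direct-integral step — showing that $w_g$ must be of the form \eqref{eq:ygpsi}, which is in essence an imprimitivity-type decomposition of the Na\u{\i}mark space relative to the quasi-invariant measure — together with the attendant $\nu$-a.e.\ bookkeeping, in particular selecting a strictly $G$-invariant version of the multiplicity function $n$ and establishing the weak measurability of the fields $(g,\om)\mapsto y(g,\om)$.
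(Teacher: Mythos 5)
Your proposal is correct and follows essentially the same route as the paper: dilate the first marginal via Proposition \ref{lemmmm}, extract the covariant operator measure $E'$ on the ancilla via the subminimal-dilation machinery, pass to its minimal Na\u{\i}mark dilation with the induced unitaries $w_g$, disintegrate these over the direct integral, and read off extremality from Theorem \ref{Scovarext} by computing the commutant of $\lk\otimes L^\infty(\nu)$. The one step you defer to ``standard structure theory'' --- that a unitary intertwining $\hat f\mapsto\widehat{\gamma_g(f)}$ must have the fibred form \eqref{eq:ygpsi} with the Radon--Nikod\'ym factor --- is exactly what the paper supplies as Lemma \ref{covariantoperators} in its appendix, so you have correctly isolated the genuine technical content.
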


\begin{proof}
By Proposition \ref{lemmmm}, since $S^1\in{\bf NCP}_\beta^U(s_1)$, there are a Hilbert space $\kil'$, an $\mc L(\hil)$-linear map $J:\mb V\to\mc L(\hil;\kil\otimes\kil')$, and a multiplier representation $G\ni g\mapsto u'_g\in\mc U(\kil')$ with the 2-cocycle $m$ such that we can write $S^1_b(v,w)=(Jv)^*(b\otimes I_{\kil'})(Jw)$ and $JU(g)=(u_g\otimes u'_g)J$. Furthermore,
$$
S_{b\otimes f}(v,w)=(Jv)^*\big[b\otimes\smallint f d\ms E'\big](Jw)
$$ 
for all $b\in\lk$, $f\in L^\infty(\nu)$, and $v,\,w\in\mb V$, where the operator measure $\ms E'$ is as in the proof of Proposition \ref{propo2} 
Denote $E'(f)=\int f\,d\ms E'$ for all $f\in L^\infty(\nu)$. As in Remark \ref{ovlsubmin}, it is easy to see that $u'_g E'(f)=[E'\circ\gamma_g](f)u'_g$ or, equivalently, $u'_g\ms E'(X){u'_g}^*=\ms E'(gX)$ for all $g\in G$, $f\in L^\infty(\nu)$, and $X\in\Sigma$, 
i.e.,\ the normalized positive operator measure $\ms E'$ is covariant.

By assumption, $\kil'$ is separable and we have a minimal Na\u{\i}mark dilation $(\hil_\oplus,F,K)$ of $E'$ where $\hil_\oplus=\int_\Om^\oplus\hil_{n(\om)}\,d\nu(\om)$, $F(f)\equiv\hat f$, and $K:\,\mc K'\to\hil_\oplus$ is an isometry \cite{HyPeYl}. Following \cite{cattaneo}, one can define an $m$-multiplier representation $y$, $G\ni g\mapsto y_g\in\mc L(\hil_\oplus)$, such that $K u'_g\equiv y_g K$ for all $g\in G$ and $y_g\hat f=\widehat{\gamma_g(f)}y_g$ for all $g\in G$ and $f\in L^\infty(\nu)$.\footnote{Recall that $y_g$ can be constructed by extending $y_g\hat f K\f:=\widehat{\gamma_g(f)}K u'_g\f$, $g\in G$, $f\in L^\infty(\mu)$, $\f\in\kil'$.} Hence, we have obtained a (projective) imprimitivity system $(\hil_\oplus,\ms F,y)$ whose Hilbert space $\hil_\oplus$ is a direct integral but not necessarily separable. Moreover, we have only assumed that the $G$-action is measurable without any further assumptions on the group $G$.

For all $g,\,h\in G$, define a $\sigma$-homomorphism $T_g:\Om\to\Om$, $\om\mapsto T_g(\om)=g\om$. By using Lemma \ref{covariantoperators} in the appendix, one finds, for all $g\in G$, a weakly measurable field $\om\mapsto y(g,\om)$ of unitary operators $y(g,\om):\hil_{n(g^{-1}\om)}\to\hil_{n(\om)}$ defined as in Equation \ref{eq:ygpsi}. Furthermore,
\begin{eqnarray*}
(y_{gh}\psi)(\om)&=&y(gh,\om)\psi\big((gh)^{-1} \om\big)\sqrt{\rho_{(gh)^{-1}}(\om)}=\ovl{m(g,h)}(y_gy_h\psi)(\om)\\
&=&\ovl{m(g,h)}y(g,\om)y(h,g^{-1} \om)\psi\big((gh)^{-1} \om\big)\sqrt{\rho_{g^{-1}}(\om)\rho_{h^{-1}}(g^{-1} \om)}\\
&=&\ovl{m(g,h)}y(g,\om)y(h,g^{-1} \om)\psi\big((gh)^{-1} \om\big)\sqrt{\rho_{(gh)^{-1}}(\om)}
\end{eqnarray*}
for all $g,\,h\in G$, $\psi\in\hil_\oplus$ and $\nu$-a.a.\ $\om\in\Om$.

Recall that $\lk\otimes L^\infty(\nu)$ acts on $\kil\otimes\hil_\oplus$ and the commutator of $\lk\otimes L^\infty(\nu)$ consists of bounded operators $I_\kil\otimes D=I_\kil\otimes\int_\Omega^\oplus D(\omega) d\nu(\om)$ where $D\in\mc L(\hil_\oplus)$ is decomposable (with operators $D(\omega)\in\mc L(\hil_{n(\omega)})$ for $\nu$-a.a.\ $\omega\in\Omega$). Hence, $S$ is an extreme point if and only if, for any decomposable $D\in\mc L(\hil_\oplus)$ the conditions $[D,y_g]=0$, $g\in G$, and $(Yv)^*\big[I_{\kil}\otimes D\big](Yv)\equiv 0$ implies $D=0$, see Theorem \ref{Scovarext}.
\end{proof}

\begin{rem}\label{rem:orbitit}\rm
In the context of Proposition \ref{paapropo}, we see that, by using the identification
$$
\hil_\oplus=\left[\bigoplus_{n\in\mb N}L^2(\nu_n)\otimes\hil_n\right]\oplus\left[L^2(\nu_\infty)\otimes\hil_\infty\right]
$$
given in \eqref{valuedirect}, one must have, for all $\omega\in\Omega$,
$$
G\omega=\{g\omega\,|\,g\in G\}\subseteq\Omega_{n^\omega}
$$
for some $n^\omega\in\{0\}\cup\mb N\cup\{\infty\}$, that is, any orbit belongs completely to some set $\Omega_n$. For any $n\in\{0\}\cup\mb N\cup\{\infty\}$, the projection $P_n:=\widehat{\chi_{\Omega_n}}$ maps the vectors of $\hil_\oplus$ onto $L^2(\nu_n)\otimes\hil_n$ and we may write, for all $b\in\mc L(\mc K),$ $f\in L^\infty(\nu),$ $v,\,w\in\mb V$,
$$
S_{b\otimes f}(v,w)=\sum_{n\in\mb N}(Yv)^*\big(b\otimes P_n\hat fP_n\big)(Yw)+(Yv)^*\big(b\otimes P_\infty\hat fP_\infty\,\big)(Yw)
$$
(weakly). Hence, $S$ can be viewed as a series of normal $(\beta\otimes\gamma^n,U)$-covariant CP maps $S^n:\,\lk\otimes L^\infty(\nu_n)\to S_{\lh}(\mb V)$,
$$
b\otimes f\mapsto\big[(v,w)\mapsto S^n_{b\otimes f}(v,w):=(Yv)^*\big(b\otimes P_n\hat fP_n\big)(Yw)\big],
$$
where the $G$-action $\gamma^n:\,G\times L^\infty(\nu_n)\to L^\infty(\nu_n)$ is defined similarly as $\gamma$. If an orbit $O:=G\omega$ belongs to $\Sigma$ (or to its $\nu$-completion) and $\nu(O)>0$ then $S$ can be further reduced by defining
$$
S^{O}_{b\otimes f}(v,w):=(Yv)^*\big(b\otimes \widehat{\chi_{O}}\hat f\widehat{\chi_{O}}\big)(Yw)
$$
(so that $S=S^{O}+S^{\Omega\setminus O}$). If $S$ is extreme then any $S^n$ is extreme in the convex set of normal $(\beta\otimes\gamma^n,U)$-covariant CP maps $S'$ (such that $S'_{I_\kil\otimes 1}(v,w)\equiv (Yv)^*(I_\kil\otimes P_n)(Yw)$), and similar result holds for any $S^{O}$. Eventually, we have seen that, by reducing $S$ to its components $S^n$, we may assume that $\hil_\oplus$ (associated to any $S^n$) is of the form $L^2(\nu_n)\otimes\hil_n$ and, if an orbit is measurable, then one can assume the $G$-action to be transitive in the orbital component. 
\end{rem}

Let $S\in{\bf NCP}_\delta^U(s_1)$ with the Na\u{\i}mark dilation space $\hil_0$ of Proposition \ref{propo2}. In the next proposition, {\it we assume that $\hil_0$ and $\kil$ are separable}. Hence, the minimal ancillary space of $S^1$ is separable and we may assume that $\hil_0$ is the direct integral $\hil_\oplus=\int_\Om^\oplus\hil_{n(\om)}\,d\nu(\om)$ of Proposition \ref{paapropo}.

\begin{prop}\label{prop:Ckiet}
There is a direct integral Hilbert space
$$
\hil_\oplus^0=\int_\Om^\oplus\hil_{n^0(\om)}\,d\nu(\om)
$$
with $n^0(g\omega)\equiv n^0(\omega)$ and an $\mc L(\hil)$-linear map $K:\mb V\to\mc L(\hil;\hil_\oplus^0)$ such that $S_f^2(v,w)=(Kv)^*\hat f^0(Kw)$ for all $v,\,w\in\mb V$ and $f\in L^\infty(\nu)$, where, for all $f\in L^\infty(\nu)$, $\hat f^0$ is the diagonal multiplicative operator, $(\hat f^0\f)(\om)=f(\om)\f(\om)$, $\f\in\hil_\oplus^0$, defined by $f$. Moreover, the $\mb C$-linear span of the operators $\hat f^0(Kv)$, $f\in L^\infty(\nu)$, $v\in\mb V$, is a $\sigma$-strongly dense subset of $\mc L(\hil;\hil_\oplus^0)$ and there exists a representation $G\ni g\mapsto y^0_g\in\mc U(\hil_\oplus^0)$,
$$
(y_g^0\f)(\om):=y^0(g,\om)\f(g^{-1}\om)\sqrt{\rho_{g^{-1}}(\om)}
$$
for all $g\in G$, $\f\in\hil_\oplus^0$, and $\nu$-a.a.\ $\om\in\Om$, with $KU(g)= y_g^0K$ defined by a map $y^0:\,G\times\Om\to\mc U(\hil_{n^0(\omega)})$ such that, for all $g,\,h\in G$ and $\nu$-a.a.\ $\om\in\Om$, $y^0(gh,\om)=y^0(g,\om)y^0(h,g^{-1} \om)$.

Let the Hilbert space $\hil_\oplus$, the map $Y$, and $g\mapsto y_g$ be as in Proposition \ref{paapropo}. There is an isometry $C:\,\hil_\oplus^0\to\mc K\otimes\hil_\oplus$ such that $(C\f)(\om)=C(\om)\f(\om)$ for all $\f\in\hil_\oplus^0$ and $\nu$-a.a.\ $\om\in\Om$ with a weakly $\nu$-measurable field $\om\mapsto C(\om)$ of isometries $C(\om):\,\hil_{n^0(\om)}\to\mc K\otimes\hil_{n(\om)}$ so that $Y=CK$. Moreover, for all $g\in G$,
\begin{equation}\label{Ckiet}
C(g\om)=\big(u_g\otimes y(g,g\om)\big)C(\om)y^0(g,g\om)^*
\end{equation}
for $\nu$-a.a.\ $\om\in\Om$.
\end{prop}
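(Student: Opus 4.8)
The plan is to run the covariant Na\u{\i}mark/KSGNS construction on the second marginal $S^2$ and then compare its minimal dilation with the dilation of $S$ obtained in Proposition~\ref{paapropo}. Since $S\in{\bf NCP}_\delta^U(s_1)$, the marginal $S^2\in{\bf NCP}_\gamma^U(s_1)$ is a normal $(\gamma,U)$-covariant CP map $L^\infty(\nu)\to S_{\lh}(\mb V)$, and I would apply Theorem~\ref{CPdilat}(b),(c): by Remark~\ref{vonNeumann} the self-dual dilation module may be taken to be $\mc L(\hil;\hil_\oplus^0)$ for some Hilbert space $\hil_\oplus^0$, with $\mc L_{\lh}\big(\mc L(\hil;\hil_\oplus^0)\big)\cong\mc L(\hil_\oplus^0)$, and the construction then supplies a normal unital $*$-homomorphism $\pi_0:L^\infty(\nu)\to\mc L(\hil_\oplus^0)$, a unitary representation $g\mapsto y^0_g$, and an $\lh$-linear $K:\mb V\to\mc L(\hil;\hil_\oplus^0)$ with $S^2_f(v,w)=(Kv)^*\pi_0(f)(Kw)$, $KU(g)=y^0_gK$, $y^0_g\pi_0(f)=\pi_0(\gamma_g f)y^0_g$, and with $\mr{lin}_{\mb C}\{\pi_0(f)Kv\}$ $\sigma$-strongly dense in $\mc L(\hil;\hil_\oplus^0)$. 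As $\pi_0$ is a normal representation of the abelian algebra $L^\infty(\nu)$ it comes from a spectral measure $\ms F^0$, and $\ms F^0\ll\nu$ because $\nu(X)=0$ forces $S^2_{\chi_X}=0$, hence $\ms F^0(X)=0$ by density.

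Next I would exhibit $(\kil\otimes\hil_\oplus,\,I_\kil\otimes\ms F,\,Y,\,g\mapsto u_g\otimes y_g)$ from Proposition~\ref{paapropo} as a (generally non-minimal) covariant dilation of $S^2$: indeed $S^2_f(v,w)=(Yv)^*(I_\kil\otimes\hat f)(Yw)$, $YU(g)=(u_g\otimes y_g)Y$, and $(u_g\otimes y_g)(I_\kil\otimes\hat f)=(I_\kil\otimes\widehat{\gamma_g f})(u_g\otimes y_g)$ by the relation $y_g\hat f=\widehat{\gamma_g f}y_g$ of Proposition~\ref{paapropo}. Then comes the standard sub-dilation step: the vectors $\ms F^0(X)(Kv)\psi$ ($X\in\Sigma$, $v\in\mb V$, $\psi\in\hil$) are total in $\hil_\oplus^0$, and the prescription
$$
C\big(\ms F^0(X)(Kv)\psi\big):=\big(I_\kil\otimes\ms F(X)\big)(Yv)\psi
$$
preserves inner products (both sides pair with $\ms F^0(X')(Kv')\psi'$ to $\sis{\psi}{S^2_{\chi_{X\cap X'}}(v,v')\psi'}$), so it extends to an isometry $C:\hil_\oplus^0\to\kil\otimes\hil_\oplus$; in particular $\hil_\oplus^0$ is separable. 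Taking $X=\Om$ gives $Y=CK$, and evaluating on the total set after moving $\ms F^0(X)$, respectively $y^0_g$, across $K$ (using $y^0_g\ms F^0(X)=\ms F^0(gX)y^0_g$, $y^0_gK=KU(g)$, and $y_g\ms F(X)=\ms F(gX)y_g$) yields $C\ms F^0(X)=(I_\kil\otimes\ms F(X))C$ and $Cy^0_g=(u_g\otimes y_g)C$.

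With separability and $\ms F^0\ll\nu$ in hand, the direct-integral theory recalled in Remark~\ref{vonNeumann2} lets me write $\hil_\oplus^0=\int_\Om^\oplus\hil_{n^0(\om)}\,d\nu(\om)$ with $\pi_0(f)=\hat f^0$ acting diagonally (here $\hil_{n^0(\om)}=\{0\}$ on the $\nu$-set where $\ms F^0$ vanishes). The covariance $y^0_g\ms F^0(X){y^0_g}^*=\ms F^0(gX)$ inserted into the appendix Lemma~\ref{covariantoperators} then forces $n^0(g\om)=n^0(\om)$ for $\nu$-a.a.\ $\om$ and produces the weakly $\nu$-measurable unitary fields $y^0(g,\om)\in\mc U(\hil_{n^0(\om)})$ with $(y^0_g\f)(\om)=y^0(g,\om)\f(g^{-1}\om)\sqrt{\rho_{g^{-1}}(\om)}$; combining the chain rule $\rho_{(gh)^{-1}}(\om)=\rho_{h^{-1}}(g^{-1}\om)\rho_{g^{-1}}(\om)$ with the fact that $g\mapsto y^0_g$ is a genuine representation gives $y^0(gh,\om)=y^0(g,\om)y^0(h,g^{-1}\om)$ $\nu$-a.e., exactly as in Proposition~\ref{paapropo}. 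Finally, $C$ intertwines the two multiplicative $L^\infty(\nu)$-actions, hence $C=\int_\Om^\oplus C(\om)\,d\nu(\om)$ is decomposable with $C(\om):\hil_{n^0(\om)}\to\kil\otimes\hil_{n(\om)}$, and these fibers are isometries for $\nu$-a.a.\ $\om$ since $C$ is; reading $Cy^0_g=(u_g\otimes y_g)C$ fiberwise gives $C(\om)y^0(g,\om)=(u_g\otimes y(g,\om))C(g^{-1}\om)$ $\nu$-a.e., and replacing $\om$ by $g\om$ (permitted by quasi-invariance of $\nu$) yields (\ref{Ckiet}).

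I expect the main obstacle to be the passage from the abstract minimal dilation of $S^2$ (which Theorem~\ref{CPdilat} only guarantees in module form) to the concrete direct-integral picture with a $G$-invariant multiplicity function and the Mackey-type form of $y^0_g$; this is where Lemma~\ref{covariantoperators} of the appendix does the real work, and one must be careful about $g$-dependent $\nu$-null sets and the possible non-faithfulness of $\ms F^0$. The decomposability of the comparison isometry $C$, needed for the fiberwise covariance (\ref{Ckiet}), is the other step invoking a genuine structural fact (operators commuting with diagonalizable $L^\infty(\nu)$-algebras are decomposable, cf.\ Remark~\ref{vonNeumann2}); the remaining computations are routine bookkeeping.
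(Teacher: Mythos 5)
Your proposal is correct and follows essentially the same route as the paper: dilate $S^2$ via the $\kil=\mb C$ case of Propositions \ref{propo2} and \ref{paapropo} (equivalently, the $W^*$-KSGNS plus Na\u{\i}mark construction), produce the comparison isometry $C$ intertwining the two dilations, and then use decomposability and Lemma \ref{covariantoperators} to read the covariance fiberwise. Your explicit construction of $C$ on the total set of vectors $\ms F^0(X)(Kv)\psi$ is precisely the ``dilation in stages'' mechanism the paper invokes through Remark \ref{subminrem}, so the two arguments coincide in substance.
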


\begin{proof}
We consider the second marginal $S^2$ which can be viewed as a normal covariant CP map $\mc L(\mb C)\otimes L^\infty(\nu)\to S_{\lh}(\mb V)$ whose minimal ancillary space is separable since, using the notations of Proposition \ref{propo2}, $S^2_f(v,w)=S_{I_\kil\otimes f}(v,w)=(Yv)^*(I_\kil\otimes\smallint f\,d\ms F)(Yw)$ and the non-minimal dilation space $\kil\otimes\hil_0$ is separable. It now follows from Proposition \ref{propo2} (by setting $\mc K=\mb C$) that there is a separable Hilbert space $\hil^0$, a spectral measure $\ms F^0:\Sigma\to\mc L(\hil^0)$ and an $\mc L(\hil)$-linear map $K:\mb V\to\mc L(\hil;\hil^0)$ such that
$$
S_f^2(v,w)=(Kv)^*\smallint f\,d\ms F^0\,(Kw),\qquad f\in L^\infty(\nu),\quad v,\,w\in\mb V.
$$
Since $\hil^0$ is separable, we may identify it with the direct integral $\hil_\oplus^0$ of the claim, and $\smallint f\,d\ms F^0=\hat f^0$.
Similarly, by Proposition \ref{paapropo} (again setting $\kil=\mb C$), there exists a map $G\times\Om\ni (g,\om)\mapsto y^0(g,\om)\in\mc U(\hil_{n^0(\omega)})$ such that, for all $g,\,h\in G$ and $\nu$-a.a.\ $\om\in\Om$, $y^0(gh,\om)=y^0(g,\om)y^0(h,g^{-1} \om)$ and, defining $G\ni g\mapsto y^0_g\in \mc U(\hil_\oplus^0)$, 
$$
(y_g^0\f)(\om):=y^0(g,\om)\f(g^{-1}\om)\sqrt{\rho_{g^{-1}}(\om)}
$$
for all $g\in G$, $\f\in\hil_\oplus^0$, and $\nu$-a.a.\ $\om\in\Om$, we have $KU(g)= y_g^0K$ and (by definition) $y_g^0\hat f^0=\widehat{\gamma_g(f)}{}^0y_g^0$ for all $g\in G$ and $f\in L^\infty(\nu)$.

According to Remark \ref{subminrem}, there is a (unique) isometry $C:\,\hil_\oplus^0\to\mc K\otimes\hil_\oplus$ such that $C\hat f^0=(I_\kil\otimes\hat f)C$, $Cy_g^0=(u_g\otimes y_g)C$, and $Y=CK$ for all $f\in L^\infty(\nu)$ and $g\in G$. The existence of the field of isometries of the claim follows by using $C\hat f^0=(I_\kil\otimes\hat f)C$ and standard methods (embedding the spaces $\hil_\oplus$ and $\hil_\oplus^0$ in the same space $L^2(\nu)\otimes\hil_\infty$ with decomposable isometries) and by identifying $\mc K\otimes\hil_\oplus$ with $\int_\Om^\oplus\mc K\otimes\hil_{n(\om)}\,d\nu(\om)$. The second condition $Cy_g^0\equiv(u_g\otimes y_g)C$ is equivalent with Equation \eqref{Ckiet}.
\end{proof}

\begin{rem}
{\rm
Let us elaborate the result above. The isometry $C$ defines a quantum channel $\Phi:\,\lk\to\mc L(\hil_\oplus^0)$ via $\Phi(b):=C^*(b\otimes I_{\hil_\oplus})C$, $b\in\lk$, and we may write
\begin{equation}\label{aklsjdgfhlaksjdf}
S_{b\otimes f}(v,w)=(Kv)^*\Phi(b)\hat f^0(Kw),\qquad b\in\mc L(\mc K),\quad f\in L^\infty(\nu),\quad v,\,w\in\mb V.
\end{equation}
Furthermore, $[\Phi(b),\hat f^0]\equiv 0$ and $(y_g^0)^*\Phi(b)y_g^0\equiv\Phi(u_g^*bu_g)$ (i.e.\ the channel $\Phi$ is covariant) or, written in the pointwise form,
$$
y^0(g,g\omega)^*\Phi_{g\omega}(b)y^0(g,g\omega)=\Phi_\omega(u_g^*bu_g)
$$
for all $g\in G$ and $\nu$-a.a.\ $\om\in\Om$ where $\Phi_\omega:\,\lk\to\mc L(\hil_{n(\omega)})$ is a quantum channel defined by $\Phi_\omega(b):=C(\omega)^*(b\otimes I_{\hil_{n(\omega)}})C(\omega)$, $b\in\lk$. Now \eqref{aklsjdgfhlaksjdf} transforms to
\begin{eqnarray}
&&\langle\f| S_{b\otimes f}(v,w)\psi\rangle\nonumber\\
&=&\int_\Omega f(\omega)\big\langle[(Kv)\f](\omega)\big|\Phi_\omega(b)[(Kw)\psi](\omega)\big\rangle d\nu(\omega) \nonumber \\
&=&\int_\Omega f(\omega)\big\langle C(\omega)[(Kv)\f](\omega)\big|(b\otimes I_{\hil_{n(\omega)}})C(\omega)[(Kw)\psi](\omega)\big\rangle d\nu(\omega) \label{CK} \\
&=&\int_\Omega f(\omega)\big\langle [(Yv)\f](\omega)\big|(b\otimes I_{\hil_{n(\omega)}})[(Yw)\psi](\omega)\big\rangle d\nu(\omega) \nonumber
\end{eqnarray}
for all $b\in\mc L(\mc K),$ $f\in L^\infty(\nu),$ $v,\,w\in\mb V$, and $\f,\,\psi\in\hil$, where, e.g.,\ $[(KU(g)w)\psi](\omega)=[y_g^0(Kw)\psi](\omega)=y^0(g,\om)[(Kw)\psi](g^{-1}\om)\sqrt{\rho_{g^{-1}}(\om)}$ and \eqref{Ckiet} holds. Hence, {\it $S$ is completely determined by the space $\hil_\oplus^0$, the map $K$, the representation $g\mapsto y_g^0$, and the covariant decomposable channel $\Phi$,} so that we have obtained a generalization of Theorem 1 of \cite{Pe13b}. Finally, we note that the direct integral space $\hil_\oplus\cong\hil_0$  was assumed to be separable so that the restricted measure space $(\Omega',\Sigma',\nu')$, $\Omega'=\{\omega\in\Omega\,|\,n(\omega)>0\}$, must have a countable basis, see Remark \ref{vonNeumann2}. Clearly, $\hil_\oplus^0\cong C\hil_\oplus^0\subseteq\kil\otimes\hil_\oplus$ is also separable.
}
\end{rem}

From now on, we assume that $\Omega$ (resp.\ $G$) is a locally compact second countable Hausdorff (lcsc) space (resp.\ lcsc group) equipped with the Borel $\sigma$-algebra $\mc B(\Om)$ (resp. $\mc B(G)$) and the $G$-action $G\times\Omega\ni(g,\omega)\mapsto g\omega\in\Omega$ is continuous (and thus jointly Borel-measurable). In addition, the 2-cocycle $m$ is assumed to be trivial by Remark \ref{central2} (note that $m$ is now $\mathbb T$-valued so that $G^{m^*}$ is also a lcsc group and the $G^{m^*}$-action $(g,t)x:=gx$ is also continuous). The lcsc-assumptions imply that the stability subgroups are closed and all orbits are Borel sets, and thus we restrict our attention to a single orbit (of positive measure) and assume that $\Omega$ is a transitive $G$-space. Transitivity of $\Om$ implies that, by picking a reference point $\om_0\in\Om$ and defining the (closed) stability subgroup $H\leq G$ of $\om_0$, $\Om$ is homeomorphic to the space $G/H$, the left $H$-cosets in $G$. Hence, we simply set $\Om=G/H$. We also assume that the map $G\ni g\mapsto u_g\in\mc U(\kil)$ is weakly measurable, which means that, for each $\f,\,\psi\in\kil$, $G\ni g\mapsto\langle\f|u_g\psi\rangle\in\mb C$ is $(\mc B(G),\mc B(\mb C))$-measurable.

There is a Borel section $s:\,\Om\to G$, $s(H)=e$, for the canonical projection $G\to \Om$, $g\mapsto \overline g:=gH$, which from now on shall be fixed \cite[Theorem 5.11]{varadarajan}. For any Hilbert space $\mc M$, we say that a map $y:\,G\times\Om\to\mc U(\mc M)$ is a {\it strict $(G,\Om,\mc M)$-cocycle} if it is (jointly) measurable and $y(e,\om)=I_{\mc M}$ and $y(gh,\om)=y(g,\om)y(h,g^{-1}\om)$ hold for all $g,\,h\in G$ and all $\om\in\Om$. For any Hilbert space $\mc M$ and any weakly measurable representation $\pi:H\to\mc U(\mc M)$, define then the {\it Wigner rotation} $y^\pi:\,G\times\Om\to\mc U(\mc M)$ through $y^\pi(g,\om)=\pi\big(s(\om)^{-1}gs(g^{-1}\om)\big)$ for all $g\in G$ and $\om\in\Om$. One may easily check that $y^\pi$ is a strict $(G,\Om,\mc M)$-cocycle, $\pi(h)=y^\pi(h,H)$, $h\in H$. On the other hand, for any strict cocycle $y$, defining the representations $\pi:H\to\mc U(\mc M)$ and the weakly measurable map $\xi:\Om\to\mc U(\mc M)$ through $\pi(h):=y(h,H)$ and $\xi(\om):=y\big(s(\om),\om\big)$ one finds that $y$ and $y^\pi$ are cohomologous in the sense that $y(g,\om)=\xi(\om)y^\pi(g,\om)\xi(g^{-1}\om)^*$ for all $g\in G$ and $\om\in\Om$.

In the proposition below, we make the same assumptions on the CP map $S\in{\bf NCP}_\delta^U(s_1)$ as in Proposition \ref{prop:Ckiet}: the dilation space $\hil_0$ of Proposition \ref{propo2} as well as $\mc K$ are required to be separable. Again, we may identify $\hil_0$ with the direct integral $\hil_\oplus$ of Proposition \ref{paapropo}.

\begin{prop}\label{prop:lcsc}
Suppose that the maps $G\ni g\mapsto S_{b\otimes f}\big(v,U(g)v\big)\in\lh$, $b\in\lk$, $f\in L^\infty(\nu)$, $v\in\mb V$, are weakly measurable. Let $\hil_\oplus$, $\hil_\oplus^0$, the representations $g\mapsto y_g$ and $g\mapsto y^0_g$, $K$, $Y$, and $C$ be as in Proposition \ref{prop:Ckiet}. There are separable Hilbert spaces $\mc M$ and $\mc M^0$ such that $\hil_\oplus=L^2(\nu)\otimes\mc M$ and $\hil_\oplus^0=L^2(\nu)\otimes\mc M^0$ and weakly measurable (or equivalently strongly continuous) representations $\pi:H\to\mc U(\mc M)$ and $\rho:H\to\mc U(\mc M^0)$ such that $y_g=y^\pi_g$ and $y^0_g=y^\rho_g$ for all $g\in G$, where
\begin{eqnarray}
(y_g^\pi\psi)(\om)&=&y^\pi(g,\om)\psi(g^{-1}\om)\sqrt{\rho_{g^{-1}}(\om)}\label{imprimitiiviesitys1}\\
(y_g^\rho\f)(\om)&=&y^\rho(g,\om)\f(g^{-1}\om)\sqrt{\rho_{g^{-1}}(\om)}\label{imprimitiiviesitys1b}
\end{eqnarray}
for all $g\in G$, $\psi\in L^2(\nu)\otimes\mc M$, $\f\in L^2(\nu)\otimes\mc M^0$, and $\nu$-a.a.\ $\om\in\Om$. Moreover, there is an isometry $C_0:\mc M^0\to\mc K\otimes\mc M$ such that $C_0\rho(h)=\big(u_h\otimes\pi(h)\big)C_0$ for all $h\in H$ and
\begin{equation}\label{Ckietratk}
C(\ovl g)=\big(u_g\otimes y^\pi(g,\ovl g)\big)C_0y^\rho(g,\ovl g)^*
\end{equation}
for a.a.\ $g\in G$, where the isometries $C(\ovl g)$, $g\in G$, are the ones introduced in Proposition \ref{prop:Ckiet}.
\end{prop}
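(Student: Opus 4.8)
\textbf{Strategy.} The plan is to combine three ingredients: transitivity of the $G$-action forces the direct integrals $\hil_\oplus$ and $\hil_\oplus^0$ of Proposition \ref{prop:Ckiet} to have \emph{constant} fibre; the cocycles $g\mapsto y_g$ and $g\mapsto y_g^0$ furnished by Propositions \ref{paapropo} and \ref{prop:Ckiet} can be arranged to be \emph{strict} $(G,\Om,\cdot)$-cocycles; and, once they are strict, the Wigner-rotation and imprimitivity machinery introduced just before the statement applies. First I would record that, since $n(g\om)=n(\om)$ and $n^0(g\om)=n^0(\om)$ for all $g\in G$, $\om\in\Om$ and $\Om=G/H$ is a single orbit, the multiplicity functions $n,n^0$ are constant. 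Writing $\mc M:=\hil_n$ and $\mc M^0:=\hil_{n^0}$ — separable, because $\hil_\oplus\cong\hil_0$ and $\hil_\oplus^0$ are separable — identifies $\hil_\oplus=\int_\Om^\oplus\mc M\,d\nu=L^2(\nu)\otimes\mc M$ and $\hil_\oplus^0=L^2(\nu)\otimes\mc M^0$, and $y_g$, $y_g^0$ then act as in \eqref{eq:ygpsi} through weakly $\nu$-measurable fields $\om\mapsto y(g,\om)\in\mc U(\mc M)$ and $\om\mapsto y^0(g,\om)\in\mc U(\mc M^0)$.

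\textbf{Strictification and the inducing representations.} Next I would use the measurability hypotheses — weak measurability of $g\mapsto S_{b\otimes f}\big(v,U(g)v\big)$ and of $g\mapsto u_g$ — which propagate through the minimal covariant dilation of $S^1$, the minimal Na\u{\i}mark dilation of $\ms E'$, and the constructions of Propositions \ref{paapropo} and \ref{prop:Ckiet}, to conclude that $y$ and $y^0$ are jointly Borel measurable. By the standard strictification of a measurable cocycle over a homogeneous space (as in the references on canonical systems of imprimitivity), $y$ and $y^0$ may then be replaced, modifying them on $\nu$-null sets, by genuine strict $(G,\Om,\mc M)$- resp.\ $(G,\Om,\mc M^0)$-cocycles; such null-set modifications leave every $\nu$-a.e.\ identity of Propositions \ref{paapropo} and \ref{prop:Ckiet} intact. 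With strict cocycles at hand, set $\pi(h):=y(h,H)$ and $\rho(h):=y^0(h,H)$ for $h\in H$. The cocycle identity evaluated at $\om=H$ (where $h^{-1}H=H$) shows $\pi:H\to\mc U(\mc M)$ and $\rho:H\to\mc U(\mc M^0)$ are homomorphisms; they are weakly measurable, hence strongly continuous since $H$ is lcsc and $\mc M$, $\mc M^0$ are separable. The cohomology statement recorded before the proposition then produces weakly measurable fields of unitaries $\xi(\om)=y\big(s(\om),\om\big)$ and $\xi^0(\om)=y^0\big(s(\om),\om\big)$ with $y(g,\om)=\xi(\om)\,y^\pi(g,\om)\,\xi(g^{-1}\om)^*$ and $y^0(g,\om)=\xi^0(\om)\,y^\rho(g,\om)\,\xi^0(g^{-1}\om)^*$ for all $g,\om$.

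\textbf{Transport and the intertwiner $C_0$.} Finally I would transport the dilation data of Proposition \ref{prop:Ckiet} through the decomposable unitaries $\Xi$ on $\hil_\oplus$ and $\Xi^0$ on $\hil_\oplus^0$ given by $(\Xi\psi)(\om)=\xi(\om)\psi(\om)$, $(\Xi^0\f)(\om)=\xi^0(\om)\f(\om)$: replace $Y$ by $(I_\kil\otimes\Xi^*)Y$, $K$ by $\Xi^{0*}K$, and $C$ by $(I_\kil\otimes\Xi^*)C\Xi^0$. Since $\Xi,\Xi^0$ are decomposable they commute with the diagonal operators $\hat f$, $\hat f^0$, so the representation formulas for $S$ and $S^2$, the $\sigma$-strong density assertions, and the relation $Y=CK$ are all preserved, while a one-line computation (using $\xi(\om)^*y(g,\om)\xi(g^{-1}\om)=y^\pi(g,\om)$ and the analogue for $y^0$) turns the intertwining relations $YU(g)=(u_g\otimes y_g)Y$, $KU(g)=y^0_gK$, $Cy^0_g=(u_g\otimes y_g)C$ into $YU(g)=(u_g\otimes y^\pi_g)Y$, $KU(g)=y^\rho_gK$ and $Cy^\rho_g=(u_g\otimes y^\pi_g)C$ for all $g$. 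This last operator identity, together with $C\hat f^0=(I_\kil\otimes\hat f)C$, exhibits (the new) $C$ as an intertwiner of the systems of imprimitivity over $\Om=G/H$ induced from $\rho$ and from $u|_H\otimes\pi$; by the imprimitivity theorem such an intertwiner is induced by an $H$-map, i.e.\ there is an isometry $C_0:\mc M^0\to\kil\otimes\mc M$ with $C_0\rho(h)=(u_h\otimes\pi(h))C_0$ whose induced (Mackey) form is exactly \eqref{Ckietratk}. Concretely, reading $Cy^\rho_g=(u_g\otimes y^\pi_g)C$ in fibres as in the proof of Proposition \ref{prop:Ckiet} gives $C(g\om)=(u_g\otimes y^\pi(g,g\om))C(\om)y^\rho(g,g\om)^*$; specializing $\om=H$ and $C_0:=C(H)$ yields \eqref{Ckietratk}, and then $\ovl{gh}=\ovl g$ for $h\in H$ together with $y^\pi(gh,\ovl g)=y^\pi(g,\ovl g)\pi(h)$, $y^\rho(gh,\ovl g)=y^\rho(g,\ovl g)\rho(h)$ and $u_{gh}=u_gu_h$ forces $C_0\rho(h)=(u_h\otimes\pi(h))C_0$.

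\textbf{Main obstacle.} The hard part is the strictification step: converting the merely $\nu$-a.e.\ cocycle relation of Proposition \ref{paapropo} into an everywhere-valid strict cocycle genuinely uses the weak-measurability hypotheses and the lcsc assumptions on $G$ and $\Om$ (it is the one place where second countability is essential), whereas once this is done the rest is routine bookkeeping with decomposable operators and the explicit realization of induced objects.
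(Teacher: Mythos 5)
Your plan for the first two-thirds of the proposition---propagating weak measurability from the hypothesis on $g\mapsto S_{b\otimes f}(v,U(g)v)$ through the dilation via the density of $\mr{lin}_{\mb C}\{(b\otimes\hat f)Yv\}$, observing that transitivity forces the multiplicity functions to be constant so that $\hil_\oplus=L^2(\nu)\otimes\mc M$ and $\hil_\oplus^0=L^2(\nu)\otimes\mc M^0$, strictifying the a.e.\ cocycles via the lcsc machinery, and setting $\pi(h)=y(h,H)$, $\rho(h)=y^0(h,H)$---is essentially the paper's argument, and the explicit transport of $Y$, $K$, $C$ by the decomposable unitaries $\Xi$, $\Xi^0$ is a harmless way of making the phrase ``unitarily equivalent with the Wigner rotation'' concrete.

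The gap is in your derivation of $C_0$ and of \eqref{Ckietratk}. The fibre relation $C(g\om)=\big(u_g\otimes y^\pi(g,g\om)\big)C(\om)y^\rho(g,g\om)^*$ coming from $Cy^\rho_g=(u_g\otimes y^\pi_g)C$ holds, for each fixed $g$, only for $\nu$-a.a.\ $\om$, and the field $\om\mapsto C(\om)$ is itself defined only up to $\nu$-null sets. Since the single coset $\{H\}$ is in general a $\nu$-null subset of $\Om$ (e.g.\ for $\Om=\mb R^{2n}$ with Lebesgue measure), ``specializing $\om=H$ and setting $C_0:=C(H)$'' is not meaningful: $C(H)$ depends on the choice of representative of the field and the identity need not hold there. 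The paper closes exactly this hole by a measure-theoretic argument: it forms the weakly Haar-measurable function $F(g)=\big(u_g\otimes y^\pi(g,\ovl g)\big)^*C(\ovl g)\,y^\rho(g,\ovl g)$, uses \eqref{Ckiet} to show $F(gg')=F(g')$ for a.a.\ $g'$ for each $g$, and then applies Fubini twice to conclude that $F$ is a.e.\ equal to a fixed operator $C_0$, after which the intertwining relation $C_0\rho(h)=\big(u_h\otimes\pi(h)\big)^*C_0\rho(h)$ follows from the cocycle identities exactly as in your last display. Your alternative appeal to ``the imprimitivity theorem for intertwiners'' could in principle replace this, but as stated it is only a citation: you would still have to identify $\big(\mc K\otimes L^2(\nu)\otimes\mc M,\,I_\kil\otimes\hat f,\,u_g\otimes y^\pi_g\big)$ with a canonically induced system (the equivalence $u\otimes\mr{Ind}_H^G\pi\cong\mr{Ind}_H^G(u|_H\otimes\pi)$ needs its own intertwiner), and then extract the pointwise formula \eqref{Ckietratk} from the resulting operator identity, which again is an a.e.\ statement about decomposable operators rather than an evaluation at $\om=H$. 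So either supply the Fubini argument or flesh out the induction-in-stages identification; as written, the step producing $C_0$ does not go through.
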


\begin{proof}
It follows that the representation $G\ni g\mapsto y_g\in\mc U(\hil_\oplus)$ of Proposition \ref{paapropo} is weakly measurable (or, equivalently, strongly continuous) and hence the associated map $G\times\Om\ni(g,\om)\mapsto y(g,\om)\in\mc U(\hil_{n(\om)})$ is (jointly) weakly measurable \cite[Lemma 6.5]{varadarajan}. Indeed, by polarization, the maps $g\mapsto S_{b^*\otimes \overline f}\big(v,U(g)w\big)=[(b\otimes\hat f)Yv]^*(YU(g)w)$ are weakly measurable. Since the linear span of vectors $(b\otimes \hat f)Yv$ is $\sigma$-strongly (and thus weakly) dense in $\mc L(\hil;\mc K\otimes\hil_\oplus)$ there exists a dense subspace $\mc S\subseteq\mc K\otimes\hil_\oplus$ such that the mappings $g\mapsto\langle\f'|(YU(g)w)\psi\rangle$ are measurable for all $\f'\in\mc S$ and $\psi\in\hil$. Thus, for all $\f\in\mc K\otimes\hil_\oplus$ and $\psi\in\hil$, there exists a sequence $\f_n\in\mc K\otimes\hil_\oplus$, $n\in\mb N$, such that $\lim_{n\to\infty}\f_n=\f$ and the maps $g\mapsto\langle\f_n|(YU(g)w)\psi\rangle$ are measurable for all $n\in\mb N$. Now $g\mapsto\langle\f|(YU(g)w)\psi\rangle$ is measurable as a pointwise limit of a sequence of measurable functions and $g\mapsto YU(g)w$ is weakly measurable implying that $g\mapsto (b\otimes\widehat{\gamma_g(f)})(u_g^*\otimes I_{\hil_\oplus})YU(g)w=(I_\kil\otimes y_g)[(b\otimes \hat f)Yw]$ is weakly measurable (since the space $\kil\otimes\hil_\oplus$ between the products is separable). Similarly as above, by using the density argument, one sees that $g\mapsto y_g$ is weakly measurable, and, in the same way, $g\mapsto y^0_g$ and the associated map $G\times\Om\ni(g,\om)\mapsto y^0(g,\om)\in\mc U(\hil_{n^0(\om)})$ are weakly measurable.

The fact that the direct integral spaces can be expressed with the separable Hilbert spaces $\mc M$ and $\mc M^0$ follows from transitivity and Remark \ref{rem:orbitit}. Thus the representations $g\mapsto y_g$ and $g\mapsto y^0_g$ arise from maps $y:\,G\times\Om\to\mc U(\mc M)$ and $y^0:\,G\times\Om\to\mc U(\mc M^0)$ such that, e.g.,\ for $g\mapsto y_g$,
$$
(y_g\psi)(\om)=y(g,\om)\psi(g^{-1}\om)\sqrt{\rho_{g^{-1}}(\om)}
$$
for all $g\in G$, $\psi\in L^2(\nu)\otimes\mc M$, and $\nu$-a.a.\ $\om\in\Om$, and $y(e,\om)=I_{\mc M}$ and $y(gh,\om)=y(g,\om)y(h,g^{-1}\om)$ for all $g,\,h\in G$ and for a.a. $\om\in\Om$, and similarly for $y^0$. According to \cite[Lemma 5.26]{varadarajan}, we may consider both $y$ and $y^0$ as strict cocycles, and thus cohomologous with Wigner rotations. This means that $y$ is (unitarily) equivalent with the Wigner rotation $y^\pi$ with $\pi(h)=y(h,H)$ and $y^0$ is (unitarily) equivalent with $y^\rho$ with $\rho(h)=y^0(h,H)$.

For the last claim, define a weakly Haar measurable isometry-valued map $F$ on $G$ by
$$
F(g):=\big(u_g\otimes y^\pi(g,\ovl g)\big)^*C(\ovl g)y^\rho(g,\ovl g)\in\mc L(\mc M^0;\mc K\otimes\mc M),\qquad g\in G.
$$ 
Since the function $(g,g')\mapsto F(gg')-F(g')$ is jointly (weakly) Haar measurable, the set of those $(g,g')\in G\times G$ such that $F(gg')-F(g')=0$ is a product measurable set \footnote{That is, it belongs to the completion of the product $\sigma$-algebra $\mc B(G)\otimes\mc B(G)$ with respect to the product of a Haar measure with itself.} as a preimage of the measurable set $\{0\}\subset\mc L(\mc M^0;\mc K\otimes\mc M)$ (equipped with the weak operator topology).  Using the Equation (\ref{Ckiet}), one finds that, for all $g\in G$, one has $F(gg')=F(g')$ for a.a.\ $g'\in G$. Applying the Fubini theorem, one finds that $F(gg')=F(g')$ for a.a.\ $(g,g')\in G\times G$, and another application of the Fubini theorem implies that there is $g_0\in G$ such that $F(gg_0)=F(g_0)$ for a.a.\ $g\in G$. Thus $F$ coincides with a fixed operator $C_0$ almost everywhere, i.e.,\ (\ref{Ckietratk}) holds for a.a.\ $g\in G$. It immediately follows that $C_0$ is an isometry. Fix an $h\in H$. Since e.g.\ $ y^\rho(gh,\ovl g)\equiv y^\rho(g,\ovl g)\rho(h)$,
\begin{eqnarray*}
C_0&=&\big(u_g\otimes y^\pi(g,\ovl g)\big)^*C(\ovl g)y^\rho(g,\ovl g)=\big(u_{gh}\otimes y^\pi(gh,\ovl g)\big)^*C(\ovl g)y^\rho(gh,\ovl g) \\
&=&\big(u_{h}\otimes \pi(h)\big)^*\big(u_{g}\otimes y^\pi(g,\ovl g)\big)^*C(\ovl g)y^\rho(g,\ovl g)\rho(h) 
=\big(u_{h}\otimes \pi(h)\big)^*C_0\rho(h) 
\end{eqnarray*}
for a.a.\ $g\in G$.
\end{proof}

To conclude, by combining the above results with \eqref{CK}, we have seen that the structure of $S$ is determined by a separable Hilbert space $\mc M^0$, an $\mc L(\hil)$-linear map $K:\,\mb V\to\mc L\big(\hil;L^2(\nu)\otimes\mc M^0\big)$, a strict $(G,\Om,\mc M^0)$-cocycle $y^\rho$ (or its induced representation $g\mapsto y^\rho_g$) such that $KU(g)\equiv y^\rho_gK$ and a (covariant) quantum channel $\Phi_0:\,\kil\to\mc L(\mc M^0)$ satisfying $y^\rho(h,H)^*\Phi_0(b)y^\rho(h,H)\equiv\Phi_0(u_h^*bu_h)$ (where $y^\rho(h,H)=\rho(h)$ for all $h\in H$):
\begin{eqnarray}\label{fiinollasisalla}
&&\sis{\f}{S_{b\otimes f}(v,w)\psi}\nonumber\\
&=&\int_{\Omega}f(\ovl g)\sis{[(Kv)\f](\ovl g)}{y^\rho(g,\ovl g)\Phi_0(u_g^*bu_g)y^\rho(g,\ovl g)^*[(Kw)\psi](\ovl g)}d\nu(\ovl g)
\end{eqnarray}
 for all $\f,\,\psi\in\hil$, $b\in\mc L(\mc K),$ $f\in L^\infty(\nu),$ and $v,\,w\in\mb V.$ 
The covariant dilation of $\Phi_0$ is of the form $\Phi_0(b)=C_0^*(b\otimes I_{\mc M})C_0$, $b\in\lk$, where the isometry $C_0:\,\mc M^0\to\mc K\otimes\mc M$ is such that $C_0y^\rho(h,H)=\big(u_h\otimes\pi(h)\big)C_0$ for all $h\in H$ (and $\pi$ is a unitary representation of $H$ in a Hilbert space $\mc M$). Note that $h\mapsto \rho(h)=C_0^*\big(u_h\otimes\pi(h)\big)C_0$ is (unitarily equivalent with) a subrepresentation of $h\mapsto u_h\otimes\pi(h)$. Moreover, giving $\Phi_0$ a Kraus decomposition 
\begin{equation}\label{fiinollakraus}
\Phi_0(b)=\sum_{j=1}^r\ms A_j^*b\ms A_j,\qquad r\in\mb N\cup\{\infty\},
\end{equation}
($\sigma$-strongly) where $\ms A_j\in\mc L(\mc M^0;\kil)$, one can write the covariance condition for $\Phi_0$ in the form 
\begin{equation}\label{covarcondi}
\sum_{j=1}^r\ms A_j^*u_h^*bu_h\ms A_j=\sum_{j=1}^r\rho(h)^*\ms A_j^*b\ms A_j\rho(h)
\end{equation}
for all $h\in H$ and $b\in\mc L(\mc K)$. In the next section, we study the case $\hil=\mb C$ and elaborate equation \eqref{fiinollasisalla} for a quantum instrument by adding assumptions on the groups $G$ and $H$; see Theorem \ref{covinstr}.

\section{Covariant instruments in the case of a type-I group}\label{covariantobs}

In this section, we fix a unimodular lcsc group $G$ of type I \cite{Dix1,folland} and a strongly continuous unitary representation $U:\,G\to\mc U(\mc V)$ where $\mc V$ is a separable Hilbert space. Denote the unitary dual space of $G$ by $\hat G$, i.e.,\ $\tau\in\hat G$ is a unitary equivalence class of irreducible unitary representations of $G$. We identify each $\tau\in\hat G$ with a fixed representative, i.e.,\ a single irreducible unitary representation $g\mapsto\tau_g$ that operates on a separable Hilbert space $\mc K(\tau)$ whose dimension is $n(\tau)\in\mb N\cup\{\infty\}$. It follows that there are separable Hilbert spaces $\mc L(\tau)$, $\tau\in\hat G$, such that we may set
\begin{equation}\label{Usuora}
\mc V=\int_{\hat G}^\oplus\mc K(\tau)\otimes\mc L(\tau)\,d\mu(\tau)
\end{equation}
with some measure $\mu:\mc B(\hat G)\to[0,\infty]$ and
\begin{equation}\label{Uop}
\big(U(g)\f)(\tau)=\big(\tau_g\otimes I_{\mc L(\tau)}\big)\f(\tau)
\end{equation}
for all $\f\in\hil$ and $\mu$-a.a.\ $\tau\in\hat G$ \cite{Dix1}. We assume that $H$ is a {\it compact} subgroup of $G$ and denote the space $G/H$ of left cosets by $\Om$ and the canonical projection by $q:\,g\mapsto \ovl g=gH$.

We say that a normalized positive operator measure (observable) $\ms M:\,\mc B(\Omega)\to\mc L(\mc V)$ is $(U,\Om)$-covariant if $\ms M(gX)= U(g)\ms M(X)U(g)^*$ for all $g\in G$ and $X\in\mc B(\Omega)$, and denote the convex set of such measures by $\mc O^U(\Om)$. In this section, we give an exhaustive characterization for the $(U,\Om)$-covariant observables largely following \cite{holevotypeI, carmeli, holevopell}. In \cite{carmeli} the characterization was given in the case where $G$ is compact whereas in \cite{holevopell} a similar result was obtained in the case of the trivial subgroup $H=\{e\}$. We show that combining these results one can characterize the set $\mc O^U(\Om)$ in the case where the group $G$ needs not be compact but the (non-trivial) subgroup $H$ is compact. In the end of this section we apply the results concerning covariant observables to covariant quantum instruments. 

Before we go on to characterize covariant observables, let us recall some basics of harmonic analysis of topological groups. Fix the Haar measures\footnote{Both left and right since the groups are unimodular.} $\nu_G$ and $\nu_H$, $\nu_H(H)=1$, of the groups $G$ and $H$, respectively. Pick an invariant measure $\nu_{\Om}:\,\mc B(\Om)\to[0,\infty]$ such that
\begin{equation}\label{Hmitta}
\int_Gf(g)\,d\nu_G(g)=\int_\Om\int_Hf(gh)\,d\nu_H(h)\,d\nu_\Om(\ovl g)
\end{equation}
for all $f\in L^1(\nu_G)$. Especially, $\nu_\Om(X)=\nu_G\big(q^{-1}(X)\big)$, $X\in\mc B(\Om)$. Any two quasi-invariant measures on $\Om$ are mutually absolutely continuous \cite[Theorem 5.19]{varadarajan}. Especially, any $\ms M\in\mc O^U(\Om)$ is absolutely continuous with respect to $\nu_\Om$. 

Suppose that $\mc M^0$ is a separable Hilbert space and $\rho:H\to\mc U(\mc M^0)$ is a strongly continuous unitary representation. The representation $g\mapsto y^\rho_g$ induced from $\rho$ defined as in (\ref{imprimitiiviesitys1}) can be expressed in an equivalent way\footnote{In general, the definition of the representation (\ref{imprimitiiviesitys1}) includes multiplication with the square root of the density function $\rho_g(\om)$ but our simplifying assumption on unimodularity allows us to use the $G$-invariant measure $\nu_\Om$ implying $\rho_g(\om)=1$ for $\nu_\Om$-a.a.\ $\om\in \Om$.} (for details, see e.g.\ Section 6.1 of \cite{folland}): Define the linear space $\mf K^0$ of continuous functions $f:\,G\to\mc M^0$ such that the (projected) support of $f$ is compact and $f(gh)=\rho(h)^*f(g)$ for all $g\in G$ and $h\in H$. Let $\mf K^\rho$ denote the completion of $\mf K^0$ with respect to the inner product $\sis{f}{f'}=\int_\Om\sis{f(g)}{f'(g)}\,d\nu_\Om(\ovl g)$, $f,\,f'\in\mf K^0$. Define a strongly continuous representation $G\ni g\mapsto\tj^\rho_g\in\mc U(\mf K^\rho)$ and  a unitary operator $U':\,\mf K^\rho\to L^2(\nu_\Om;\mc M^0)\cong L^2(\nu_\Om)\otimes\mc M^0$ through equations $(\tj^\rho_gf)(g'):=f(g^{-1}g')$ and 
\begin{equation}\label{Un kaava}
(U'f)(\ovl g):=y^\rho(g,\ovl g)f(g)
\end{equation} 
for all $f\in\mf K^0$ and $g,\,g'\in G$. It follows that the representations $g\mapsto\tj^\rho_g$ and $g\mapsto y^\rho_g$ are unitarily equivalent: $y^\rho_gU'\equiv U'\tj^\rho_g$.

Since $H$ is compact we may view $\mf K^\rho$ as a closed subspace of  $L^2(\nu_G;\mc M^0)$. Indeed, when $f\in\mf K^\rho$, we may calculate its  $L^2(\nu_G;\mc M^0)$-norm
\begin{eqnarray*}
\|f\|^2=\int_\Om\int_H\|f(gh)\|^2\,d\nu_H(h)\,d\nu_\Om(\ovl g)&=&\int_\Om\int_H\|f(g)\|^2\,d\nu_H(h)\,d\nu_\Om(\ovl g)\\
&=&\int_\Om\|f(g)\|^2\,d\nu_\Om(\ovl g)
\end{eqnarray*}
which is the same as its $\mf K^\rho$-norm. The representation $G\ni g\mapsto\tj^\rho_g\in\mc U(\mf K^\rho)$ is nothing but the projection of the left regular representation $G\ni g\mapsto\lambda^{\mc M^0}_g\in\mc U\big(L^2(\nu_G;\mc M^0)\big)$ defined by $(\lambda^{\mc M^0}_g\psi)(g'):=\psi(g^{-1}g')$ for all $g\in G$, $\psi\in L^2(\nu_G;\mc M^0)$, and a.a.\ $g'\in G$:
$$
{\mc I}\tj^\rho_g=\lambda^{\mc M^0}_g{\mc I}, \qquad g\in G
$$
where ${\mc I}:\,\mf K^\rho\to L^2(\nu_G;\mc M^0)$ is the isometry $f\mapsto f$.

Define the spectral measures $\ms Q:\,\mc B(G)\to\mc L\big(L^2(\nu_G;\mc M^0)\big)$,
$\big(\ms Q(Z)\psi)(g):=\chi_Z(g)\psi(g)$ for all $Z\in\mc B(G)$, $\psi\in L^2(\nu_G;\mc M^0)$, and a.a.\ $g\in G$, and $\ms M^\rho:\,\mc B(\Om)\to\mc L(\mf K^\rho)$, $\big(\ms M^\rho(X)f\big)(g):=\chi_X(\ovl g)f(g)$ for all $f\in\mf K^\rho$, $X\in\mc B(\Om)$, and a.a.\ $g\in G$. Hence, 
$$
{\ms M}^\rho(X)={\mc I}^*{\ms Q}\big(q^{-1}(X)\big){\mc I},\qquad X\in\mc B(\Omega).
$$
From now on, we call the triple $(\mf K^\rho,\ms M^\rho,\tj^\rho)$ as the {\it canonical system of imprimitivity} associated with $\rho$. Note that $\hat\chi_X^0U'\equiv U'{\ms M}^\rho(X)$.

\begin{rem}\label{imprimitivity}\rm
Assume that $\mc O^U(\Om)\ne\emptyset$ and let $\ms M\in\mc O^U(\Om)$. According to Proposition \ref{prop:lcsc}, there are a separable Hilbert space $\mc M^0$, a strongly continuous representation $\rho:\,H\to\mc U(\mc M^0)$ and an isometry $K:\,\mc V\to\mf K^\rho\subseteq L^2(\nu_G;\mc M^0)$ such that $K^*{\ms M}^\rho(X)K\equiv{\ms M}(X)$, $KU(g)\equiv \tj^\rho_gK$, and the linear span of the vectors ${\ms M}^\rho(X)K\f$, $X\in\Sigma$, $\f\in\mc V$, is dense in $\mf K^\rho$. Note that the conditions of Proposition \ref{prop:lcsc} are met because $\mc V$ is separable and $\Om$ is lcsc guaranteeing that the minimal Na\u{\i}mark dilation space of any $\ms M\in\mc O^U(\Om)$ is separable. Thus, the quadruple $(\mf K^\rho,\ms M^\rho,K,\tj^\rho)$ constitutes a minimal covariant Na\u{\i}mark dilation for $\ms M$ where $(\mf K^\rho,\ms M^\rho,\tj^\rho)$ is the canonical system of imprimitivity associated with $\rho$. This result has first been proven in \cite{cattaneo}. Note that ${\ms M}(X)\equiv\ovl K^*{\ms Q}\big(q^{-1}(X)\big)\ovl K$ and $\ovl K U(g)=\lambda^{\mc M^0}_g\ovl K$ where $\ovl K:={\mc I}K$ is an isometry from ${\mc V}$ to  $L^2(\nu_G;\mc M^0)$. Especially, one can define a normalized positive operator measure $\ovl{\sf M}:\,{\mc B}(G)\to\mc L(\mc V)$ by $\ovl{\sf M}(Z):=\ovl K^*{\ms Q}(Z)\ovl K$, $Z\in{\mc B}(G)$. Now ${\ms M}(X)\equiv\ovl{\ms M}\big(q^{-1}(X)\big)$ and $\ovl{\sf M}$ is covariant: $\ovl{\ms M}(gZ)= U(g)\ovl{\ms M}(Z)U(g)^*$ for all $g\in G$ and $Z\in\mc B(G)$. Hence, we can apply the results of \cite{holevotypeI} to $\ovl{\ms M}$. Especially, we find that the measure $\mu$ of \eqref{Usuora} must be absolutely continuous with respect to the Plancherel measure $\mu_{\hat G}:\,\mc B(\hat G)\to[0,\infty]$ associated with the Haar measure $\nu_G$ by the Parseval--Plancherel formula. We clearly may and, from now on, will assume that $\mu=\mu_{\hat G}$.
\end{rem}

Pick $\mu_{\hat G}$-measurable fields $\hat G\ni\tau\mapsto\zeta(\tau)\in\mc K(\tau)$ and $\hat G\ni\tau\mapsto\xi(\tau)\in\mc L(\tau)$ and define the field $\zeta\star\xi$ via $(\zeta\star\xi)(\tau)=\zeta(\tau)\otimes\xi(\tau)$. Denote by $\mb V$ the linear hull of vectors $\zeta\star\xi\in\mc V$ for which the norm $\|\zeta\star\xi\|_1:=\int_{\hat G}\|\zeta(\tau)\|\|\xi(\tau)\|\,d\mu_{\hat G}(\tau)<\infty$. Note also that the Hilbert space norm 
$\|\zeta\star\xi\|_{\mc V}<\infty$.\footnote{In addition, we consider $\tau\mapsto\zeta(\tau)\otimes\xi(\tau)$ as a fixed representative from the class $\zeta\star\xi\in\mc V$.} Clearly, $\mb V$ is dense in $\mc V$ and invariant under $U$ (i.e.,\ $U(g)\mb V\subseteq\mb V$ for all $g\in G$). Let $\hat G\ni\tau\mapsto \{e_j(\tau)\}_{j=1}^{n(\tau)}\subset\mc K(\tau)$ be a measurable field of orthonormal bases and $\{e_j(\tau)^*\}_{j=1}^{n(\tau)}\subset\mc K(\tau)^*$ the dual basis of $ \{e_j(\tau)\}_{j=1}^{n(\tau)}$ where $\mc K(\tau)^*$ is the topological dual space of $\mc K(\tau)$.

 The following theorem characterizes the convex set $\mc O^U(\Om)$ and its extreme points.

\begin{theor}\label{extcovobsker}
For each $\ms M\in\mc O^U(\Om)$ there are a separable Hilbert space ${\mc M}^0$, a linear map $\Lambda:\mb V\to{\mc M}^0$ whose range $\Lambda(\mb V)$ is total in ${\mc M}^0$, and a strongly continuous unitary representation $\rho:\,H\to\mc U({\mc M}^0)$ such that $\Lambda U(h)=\rho(h)\Lambda$ for all $h\in H$ and
\begin{equation}\label{kolmogorov}
\sis{\f}{\ms M(X)\psi}=\int_X\sis{\Lambda U(g)^*\f}{\Lambda U(g)^*\psi}\,d\nu_\Om(\ovl g)
\end{equation}
for all $\f,\,\psi\in\mb V$ and $X\in\mc B(\Om)$. The map $\Lambda$ has the following structure: there are weakly $\mu_{\hat G}$-measurable fields $\tau\mapsto\Lambda_j(\tau)$, $j\in\mb N$, of bounded operators $\Lambda_j(\tau):\,\mc L(\tau)\to {\mc M}^0$ such that $\sum_{j=1}^{n(\tau)}\Lambda_j(\tau)^*\Lambda_j(\tau)=I_{\mc L(\tau)}$ ($\sigma$-strongly) and \begin{equation}\label{Phiint}
\Lambda\f=\int_{\hat G}\Lambda(\tau)\f(\tau)\,d\mu_{\hat G}(\tau),\qquad\f\in\mb V,
\end{equation}
where $\Lambda(\tau):=\sum_{j=1}^{n(\tau)} e_j(\tau)^*\otimes\Lambda_j(\tau)$ for all $\tau\in\hat G$. Conversely, a map $\ms M:\,\mc B(\Om)\to\mc L(\hil)$ defined as above in (\ref{kolmogorov}) belongs to $\mc O^U(\Om)$. Moreover, $\ms M$ is extreme in $\mc O^U(\Om)$ if and only if, for any $D\in\mc L({\mc M}^0)$, the conditions $D\rho(h)=\rho(h)D$ for all $h\in H$ and
\begin{equation}\label{extcovobskerchar}
\sum_{j=1}^{n(\tau)}\Lambda_j(\tau)^*D\Lambda_j(\tau)=0
\end{equation}
for $\mu_{\hat G}$-a.a.\ $\tau\in\hat G$ imply $D=0$.
\end{theor}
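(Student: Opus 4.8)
The plan is to obtain the asserted structure by specializing the general machinery of Section \ref{genquantinstr} to the present setting $\hil=\mb C$, $\kil=\mb C$, trivial $\mc B=\mc L(\mb C)\cong\mb C$, so that the relevant object is just a covariant observable $\ms M\in\mc O^U(\Om)$ viewed as a normal $(U,\Om)$-covariant CP map on $L^\infty(\nu_\Om)$. First I would invoke Remark \ref{imprimitivity}: since $\mc V$ is separable and $\Om=G/H$ is lcsc, Proposition \ref{prop:lcsc} applies and yields a separable Hilbert space $\mc M^0$, a strongly continuous representation $\rho\colon H\to\mc U(\mc M^0)$, and an isometry $K\colon\mc V\to\mf K^\rho$ with $K^*\ms M^\rho(X)K=\ms M(X)$ and $KU(g)=\tj^\rho_gK$, with $\mathrm{lin}_{\mb C}\{\ms M^\rho(X)K\f\}$ dense in $\mf K^\rho$; moreover $\mu=\mu_{\hat G}$, the Plancherel measure. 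Define $\Lambda\colon\mb V\to\mc M^0$ by $\Lambda\f:=(K\f)(e)$ (evaluation of the $\mf K^\rho$-valued function at the neutral element, which makes sense on the dense domain of continuous sections $\mb V$). Then $\Lambda U(h)\f=(KU(h)\f)(e)=(\tj^\rho_h K\f)(e)=(K\f)(h^{-1})=\rho(h)(K\f)(e)=\rho(h)\Lambda\f$, using the defining relation $f(gh)=\rho(h)^*f(g)$ for $f\in\mf K^\rho$. Totality of $\Lambda(\mb V)$ in $\mc M^0$ follows from the density of $\mathrm{lin}_{\mb C}\{\ms M^\rho(X)K\f\}$ in $\mf K^\rho$ together with the fact that the values $\{f(g)\}$ of functions in $\mf K^\rho$ span $\mc M^0$. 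Formula \eqref{kolmogorov} is then a direct computation: $\sis{\f}{\ms M(X)\psi}=\int_G\chi_X(\ovl g)\sis{(K\f)(g)}{(K\psi)(g)}\,d\nu_\Om(\ovl g)$ and $(K\f)(g)=y^\rho(g,\ovl g)^*\cdot$ something — more precisely, writing $U'$ for the unitary $\mf K^\rho\to L^2(\nu_\Om)\otimes\mc M^0$ of \eqref{Un kaava} and using $KU(g)=\tj^\rho_gK$ one gets $(K\f)(g)=y^\rho(g,\ovl g)^*(U'K\f)(\ovl g)$ and $(U'KU(g)^*\psi)(\ovl e)=\Lambda U(g)^*\psi$, so that $(KU(g)^{-1}\psi)(e)=\Lambda U(g)^*\psi$ matches the integrand after the change of variables built into \eqref{Hmitta}.

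Next I would extract the Fourier-side structure of $\Lambda$. Using the Plancherel decomposition \eqref{Usuora}–\eqref{Uop} with $\mu=\mu_{\hat G}$, the map $\Lambda$ is $\mb C$-linear on the dense subspace $\mb V$ of rank-one-section combinations $\zeta\star\xi$, and covariance $\Lambda U(h)=\rho(h)\Lambda$ restricts its dependence. Evaluating $\Lambda$ on $\zeta\star\xi$ and using the direct-integral form, one writes $\Lambda(\zeta\star\xi)=\int_{\hat G}\Lambda(\tau)\big(\zeta(\tau)\otimes\xi(\tau)\big)\,d\mu_{\hat G}(\tau)$ with $\Lambda(\tau)\in\mc L\big(\mc K(\tau)\otimes\mc L(\tau);\mc M^0\big)$; expanding in the chosen orthonormal field $\{e_j(\tau)\}$ of $\mc K(\tau)$ gives $\Lambda(\tau)=\sum_{j=1}^{n(\tau)}e_j(\tau)^*\otimes\Lambda_j(\tau)$ with $\Lambda_j(\tau)\in\mc L(\mc L(\tau);\mc M^0)$, $\Lambda_j(\tau)\eta=\Lambda(\tau)(e_j(\tau)\otimes\eta)$, and the fields $\tau\mapsto\Lambda_j(\tau)$ are weakly $\mu_{\hat G}$-measurable by the measurability built into Proposition \ref{prop:lcsc}. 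The normalization $\sum_j\Lambda_j(\tau)^*\Lambda_j(\tau)=I_{\mc L(\tau)}$ for $\mu_{\hat G}$-a.a.\ $\tau$ comes from $K$ being an isometry: $\|K\f\|^2=\|\f\|^2$ unfolds via the Plancherel formula and the invariant-measure identity \eqref{Hmitta} — this is where Holevo's type-I analysis (cited in Remark \ref{imprimitivity} via \cite{holevotypeI}) is used, turning the integral over $G$ of $\|(K\f)(g)\|^2$ into an integral over $\hat G$ of $\sum_j\|\Lambda_j(\tau)\xi(\tau)\|^2$, forcing the stated resolution of the identity. The converse (that any such data define an element of $\mc O^U(\Om)$) is routine: one checks positivity and $\sigma$-additivity from \eqref{kolmogorov} directly and covariance from $\nu_\Om$-invariance and $\Lambda U(h)=\rho(h)\Lambda$.

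Finally, for the extremality characterization I would appeal to Theorem \ref{Scovarext} (or its specialization in Proposition \ref{paapropo}/Remark \ref{ovlext}), applied to $\ms M$ regarded as a CP map with minimal covariant dilation built from $(\mf K^\rho,\ms M^\rho,K,\tj^\rho)$. Extremality is equivalent to triviality of the set of $D\in\mc L(\mf K^\rho)$ commuting with both $\ms M^\rho(\cdot)$ and $\tj^\rho$ and satisfying $\sis{K\f}{DK\f}=0$ for all $\f\in\mb V$. Operators commuting with the spectral measure $\ms M^\rho$ are decomposable, and commuting additionally with $\tj^\rho$ makes them precisely the fibrewise constant decomposable operators implemented (through the canonical-system-of-imprimitivity/induced-representation picture, i.e.\ \cite[Lemma 5.26 and §6]{varadarajan}) by a single operator $D\in\mc L(\mc M^0)$ with $D\rho(h)=\rho(h)D$ for all $h\in H$. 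Translating the condition $\sis{K\f}{DK\f}=0$ through $\Lambda$ and the Plancherel decomposition turns it into $\sum_{j=1}^{n(\tau)}\Lambda_j(\tau)^*D\Lambda_j(\tau)=0$ for $\mu_{\hat G}$-a.a.\ $\tau$ (the $\tau$-integrand being non-negative and integrating to zero). Hence extremality is exactly the stated implication. I expect the main obstacle to be the careful identification of the commutant described above — showing that an operator on $\mf K^\rho$ commuting with the canonical system of imprimitivity is necessarily of the form "pointwise $D$" with $D$ an $H$-intertwiner on $\mc M^0$, and that this $D$ is then the relevant object appearing in \eqref{extcovobskerchar} — together with keeping the measurability and a.e.-identifications consistent between the $G$-picture ($\mf K^\rho\subset L^2(\nu_G;\mc M^0)$), the $\Om$-picture ($L^2(\nu_\Om)\otimes\mc M^0$), and the Fourier picture (the direct integral over $\hat G$).
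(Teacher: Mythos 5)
Your overall strategy --- dilate via the canonical system of imprimitivity of Remark \ref{imprimitivity}, pass to the Fourier--Plancherel picture to extract the fields $\tau\mapsto\Lambda_j(\tau)$, and characterize extremality through the commutant of the imprimitivity system --- is the same as the paper's. But there is a genuine gap at the very first step: you define $\Lambda\f:=(K\f)(e)$ and justify it by saying this makes sense on the dense domain of continuous sections. It does not, a priori. The space $\mf K^\rho$ is the \emph{completion} of the continuous compactly supported sections, so a generic element --- in particular $K\f$ for $\f\in\mb V$ --- is only an equivalence class of functions defined $\nu_G$-a.e., and evaluation at the single point $e$ is meaningless. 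Establishing that $\ovl K\f$ has a canonical continuous representative for $\f\in\mb V$ is precisely the hard analytic content of the paper's proof: one first constructs $\Lambda$ on the Fourier side, via the decomposable isometry $W=(\mc F\otimes I_{\mc M^0})\ovl K$ with isometric fibers $W(\tau)$ (this is where \cite{holevopell} enters), then uses the $L^1$-condition $\|\zeta\star\xi\|_1<\infty$ built into the definition of $\mb V$ together with a Fubini/Fourier-inversion argument to show that $g\mapsto\Lambda U(g)^*(\zeta\star\xi)$ is continuous and coincides $\nu_G$-a.e.\ with $[\ovl K(\zeta\star\xi)](g)$; only after that can one speak of a pointwise value. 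Your order of argument is therefore circular: the well-definedness of your $\Lambda$ presupposes the continuity statement that the Fourier analysis is there to prove. The same issue propagates to your derivations of $\Lambda U(h)=\rho(h)\Lambda$ and of the totality of $\Lambda(\mb V)$, both of which rest on the a.e.\ identification plus continuity.

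A second, smaller flaw concerns the extremality step: you claim that $\sis{K\f}{\tilde DK\psi}=0$ localizes to \eqref{extcovobskerchar} because ``the $\tau$-integrand is non-negative and integrates to zero''. Since $D$ is not assumed positive, the integrand $\sum_{j}\sis{\Lambda_j(\tau)\xi_1(\tau)}{D\Lambda_j(\tau)\xi_2(\tau)}$ has no sign; the correct localization comes from varying the measurable test fields $\zeta_1,\zeta_2,\xi_1,\xi_2$ in the identity $\int_{\hat G}\sis{\zeta_1(\tau)}{\zeta_2(\tau)}\sum_{j}\sis{\Lambda_j(\tau)\xi_1(\tau)}{D\Lambda_j(\tau)\xi_2(\tau)}\,d\mu_{\hat G}(\tau)=0$. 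Your identification of the commutant of $(\ms M^\rho,\tj^\rho)$ with the ``pointwise $D$'' operators for $D\in\mc L(\mc M^0)$ commuting with $\rho$ is correct and is exactly the route the paper takes (via \cite{Dix2}), so once the two points above are repaired the argument goes through.
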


\begin{proof}
This proof largely follows the proofs of \cite[Proposition 2, Theorem 1]{holevopell} and \cite[Theorem 2]{holevotypeI}. Suppose that $\mc O^U(\Om)$ is non-empty and recall that $\mc V=\int_{\hat G}^\oplus\mc K(\tau)\otimes\mc L(\tau)\,d\mu_{\hat G}(\tau)$. Let $\ms M\in\mc O^U(\Om)$, and let $(\mf K^\rho,\ms M^\rho,K,\tj^\rho)$ be a minimal covariant Na\u{\i}mark dilation for $\ms M$ and $\ovl K={\mc I}K$ as in Remark \ref{imprimitivity}. Especially, $\rho:\,H\to\mc U(\mc M^0)$ is a strongly continuous representation where $\mc M^0$ is a Hilbert space. Next we recall the construction of a minimal covariant dilation for $\ovl{\sf M}$, see \cite[Section 4.2]{holevopell}.

For each $\tau\in\hat G$ define the conjugate linear bijection $\mc K(\tau)\ni\beta\mapsto\beta^*\in\mc K(\tau)^*$ through $\beta^*(\zeta):=\sis{\beta}{\zeta}$ whenever $\beta,\,\zeta\in\mc K(\tau)$. The space $\mc K(\tau)\otimes\mc K(\tau)^*$ is identified with the Hilbert space of Hilbert-Schmidt operators on $\mc K(\tau)$ so that $\tau_g$ acts on the first component of $|{\zeta}\rangle\langle\beta|:=\zeta\otimes\beta^*\in\mc K(\tau)\otimes\mc K(\tau)^*$. Let $\mc K_\oplus:=\int_{\hat G}^\oplus\mc K(\tau)\otimes\mc K(\tau)^*\,d\mu_{\hat G}(\tau)$. For any $\f\in L^1(\nu_G)\cap L^2(\nu_G)$ define $\mc F\f\in \mc K_\oplus$ by $(\mc F\f)(\tau)=\int_G\f(g)\tau_g\,d\nu_G(g)$ for $\mu_{\hat G}$-a.a.\ $\tau\in\hat G$. The map $\f\mapsto\mc F\f$ can be extended to a unitary map $\mc F:\,L^2(\nu_G)\to\mc K_\oplus$, the {Fourier-Plancherel operator}. Define the representation $G\ni g\mapsto\hat\lambda_g^{\mc M^0}:=(\mc F\otimes I_{\mc M^0})\lambda_g^{\mc M^0}(\mc F\otimes I_{\mc M^0})^*\in\mc U(\mc K_\oplus\otimes\mc M^0)$ for which $(\hat\lambda_g^{\mc M^0}\psi)(\tau)=(\tau_g\otimes I_{\mc K(\tau)^*}\otimes I_{\mc M^0})\psi(\tau)$ for all $g\in G$, $\psi\in\mc K_\oplus\otimes\mc M^0$ and $\mu_{\hat G}$-a.a.\ $\tau\in\hat G$. Since $\ovl K$ intertwines $U$ with $\lambda^{\mc M^0}$ and the unitary operator $\mc F\otimes I_{\mc M^0}$ intertwines $\lambda^{\mc M^0}$ with $\hat\lambda^{\mc M^0}$ it follows that the isometry $W=(\mc F\otimes I_{\mc M^0})\ovl K$ intertwines $U$ with $\hat\lambda^{\mc M^0}$. Let $\mc M$ be an infinite dimensional Hilbert space such that $\mc M^0$ is a closed subspace of $\mc M$. Then $\mc M=\mc M^0\oplus({\mc M^0})^\perp$ and we may consider $W$ as an isometry from $\mc V$ to $\kil_\oplus\otimes\mc M$. As shown in detail in \cite[Lemma 1]{holevopell}, $W$ is decomposable in the sense that $(W\f)(\tau)=\big(I_{\mc K(\tau)}\otimes W(\tau)\big)\f(\tau)$ for all $\f\in\mc V$ and $\mu_{\hat G}$-a.a.\ $\tau\in\hat G$ with some weakly $\mu_{\hat G}$-measurable field $\tau\mapsto W(\tau)$ of isometries $W(\tau):\,\mc L(\tau)\to\mc K(\tau)^*\otimes\mc M$. But $\sis{\zeta\otimes\alpha}{W\f}=0$ for all $\zeta\in\kil_\oplus$, $\alpha\in(\mc M^0)^\perp$, and $\f\in\mc V$ implying that we may assume that the range of any $W(\tau)$ is contained in $\mc K(\tau)^*\otimes\mc M^0$ (see the proof of Lemma 2 in \cite{holevopell}).

Any vector $\eta\in\mc K(\tau)^*\otimes\mc M^0$ can be expressed as a unique sequence of vectors $\{\eta_j\}_{j=1}^{n(\tau)}\subset\mc M^0$ such that $\eta=\sum_{j=1}^{n(\tau)} e_j(\tau)^*\otimes \eta_j$. Let $\Psi_j(\tau):\,\mc K(\tau)^*\otimes\mc M^0\to\mc M^0$ be the bounded operator $\eta\mapsto \eta_j$, i.e., $\eta=\sum_{j=1}^{n(\tau)} e_j(\tau)^*\otimes\big(\Psi_j(\tau)\eta\big)$ for any $\eta\in\mc K(\tau)^*\otimes\mc M^0$. Let $\Lambda_j(\tau):=\Psi_j(\tau)W(\tau)$ and $\mb V(\tau):={\rm lin}\{\zeta\otimes\xi\,|\,\zeta\in\mc K(\tau),\; \xi\in\mc L(\tau)\}$. Define a linear map $\Lambda(\tau):\,\mb V(\tau)\to\mc M^0$ by $\Lambda(\tau)(\zeta\otimes\xi):=\sum_{j=1}^{n(\tau)}\sis{e_j(\tau)}{\zeta}\Lambda_j(\tau)\xi$ so that $\Lambda(\tau)(e_j(\tau)\otimes\xi)=\Lambda_j(\tau)\xi$ and
 $\sum_{j=1}^{n(\tau)}\Lambda_j(\tau)^*\Lambda_j(\tau)=I_{\mc L(\tau)}$ (weakly and thus $\sigma$-strongly since the sequence is increasing and bounded). Fix $\zeta\star\xi\in\mb V$. Since $\sum_{j=1}^{n(\tau)}|\sis{e_j(\tau)}{\zeta(\tau)}|^2=\|\zeta(\tau)\|^2$ and $\sum_{j=1}^{n(\tau)}\|\Psi_j(\tau)W(\tau)\xi(\tau)\|^2=\|\xi(\tau)\|^2$ we may define the integral operator $\Lambda:\mb V\to\mc M^0$ by
\begin{eqnarray*}
\Lambda(\zeta\star\xi)&=&\int_{\hat G}\Lambda(\tau)(\zeta\star\xi)(\tau)\,d\mu_{\hat G}(\tau)\\
&=&\int_{\hat G}\sum_{j=1}^{n(\tau)}\sis{e_j(\tau)}{\zeta(\tau)}\underbrace{[\Psi_j(\tau)W(\tau)\xi(\tau)]}_{\in\mc M^0}\,d\mu_{\hat G}(\tau).
\end{eqnarray*}
Denote the trace over $\mc K(\tau)$ by $\mr{tr}_\tau$. Let $\alpha\in\mc M^0$ and $T(\tau):=\zeta(\tau)\otimes\beta(\tau)^*=|\zeta(\tau)\ra\la\beta(\tau)|$ where $\beta(\tau):=\sum_{j=1}^{n(\tau)}\ovl{\sis{\alpha}{\Psi_j(\tau)W(\tau)\xi(\tau)}}e_j(\tau)$. Since $\mr{tr}_\tau[T(\tau)^*T(\tau)]=\|\zeta(\tau)\|^2\|\beta(\tau)\|^2$ and $\|\beta(\tau)\|\le\|\alpha\|\|\xi(\tau)\|$ it follows that $T=\int_{\hat G}^\oplus T(\tau)d\mu_{\hat G}(\tau)\in\mc K_\oplus$ and $\mc F^*T\in L^2(\nu_G)$. For any $\f\in L^1(\nu_G)\cap L^2(\nu_G)$, the jointly measurable function $(g,\tau)\mapsto \f(g)\mr{tr}_\tau[\tau_g^*T(\tau)]$ is $\nu_G\times\mu_{\hat G}$--integrable:
\begin{eqnarray*}
&&\int_G\int_{\hat G}\big|\f(g)\mr{tr}_\tau[\tau_g^*T(\tau)]\big|\,d\mu_{\hat G}(\tau)\,d\nu_G(g)\\
&\le&\|\alpha\|\int_G|\f(g)|\,d\nu_G(g)\int_{\hat G}\|\zeta(\tau)\|\|\xi(\tau)\|d\mu_{\hat G}(\tau)<\infty.
\end{eqnarray*}
By using the Fubini theorem, the formula $(\mc F\f)(\tau)=\int_G\f(g)\tau_g\,d\nu_G(g)$ (defined as a weak integral), and the unitarity of $\mc F$, one gets
\begin{eqnarray*}
\int_G\int_{\hat G}\ovl{\f(g)}\mr{tr}_\tau[\tau_g^*T(\tau)]\,d\mu_{\hat G}(\tau)\,d\nu_G(g)&=&\int_{\hat G}\mr{tr}_\tau[(\mc F\f) (\tau)^*T(\tau)]\,d\mu_{\hat G}(\tau)\\
&=&\int_G\ovl{\f(g)}(\mc F^*T)(g)\,d\nu_G(g)
\end{eqnarray*}
so that $\int_{\hat G}\mr{tr}_\tau[\tau_g^*T(\tau)]\,d\mu_{\hat G}(\tau)=(\mc F^*T)(g)$ for all $g\in G\setminus N_\alpha$ where $N_\alpha\subset G$ is of zero Haar measure. Then, for all $g\in G\setminus N_\alpha$,
\begin{eqnarray*}
\sis{\alpha}{\Lambda U(g)^*(\zeta\star\xi)}
&=& \int_{\hat G}\sum_{j=1}^{n(\tau)}\sis{e_j(\tau)}{\tau_g^*\zeta(\tau)}\sis{\alpha}{\Psi_j(\tau)W(\tau)\xi(\tau)}\,d\mu_{\hat G}(\tau)\nonumber\\
&=&\int_{\hat G}\sis{\beta(\tau)}{\tau_g^*\zeta(\tau)}\,d\mu_{\hat G}(\tau)=\int_{\hat G}\mr{tr}_\tau\big[T(\tau)\tau_g^*\big]\,d\mu_{\hat G}(\tau)\\
&=&(\mc F^*T)(g)=\sis{\alpha}{[(\mc F^*\otimes I_{\mc M^0})W(\zeta\star\xi)](g)}\\
&=&\sis{\alpha}{[\ovl K(\zeta\star\xi)](g)}
\end{eqnarray*}
where we have used the identification $L^2(\nu_\Om)\otimes\mc M^0 \cong L^2(\nu_\Om;\mc M^0)$. Note that $G\ni g\mapsto\sis{\alpha}{\Lambda U(g)^*(\zeta\star\xi)}=\int_{\hat G}\sis{\beta(\tau)}{\tau_g^*\zeta(\tau)}\,d\mu_{\hat G}(\tau)\in\mb C$ is continuous.\footnote{Since $g\mapsto \int^\oplus_{\hat G}\tau_g\,d\mu_{\hat G}(\tau)$ is a weakly continuous representation of $G$ in $\int^\oplus_{\hat G}\kil(\tau)\,d\mu_{\hat G}(\tau)$ and there exist vectors $\beta',\zeta'\in\int^\oplus_{\hat G}\kil(\tau)\,d\mu_{\hat G}(\tau)$ such that $\sis{\beta'(\tau)}{\tau_g^*\zeta'(\tau)}=\sis{\beta(\tau)}{\tau_g^*\zeta(\tau)}$ for all $g\in G$ and $\tau\in\hat G$.} Let $\{\alpha_n\}_{n\in\mb N}\subset\mc M^0$ be a countable dense set in $\mc M^0$ and $N:=\cup_{n=1}^\infty N_{\alpha_n}$ a zero measurable set. By continuity of the inner product, we get $[\ovl K(\zeta\star\xi)](g)=\Lambda U(g)^*(\zeta\star\xi)$ for all $g\in G\setminus N$ and 
$$
\sis{\f}{\ovl{\sf M}(Z)\psi}=\sis{\f}{\ovl K^*{\ms Q}(Z)\ovl K\psi}=\int_Z\sis{\Lambda U(g)^*\f}{\Lambda U(g)^*\psi}\,d\nu_G(g)
$$
for all $\f,\,\psi\in\mb V$ and $Z\in\mc B(G)$, see equations (1) and (9) and Proposition 2 of \cite{holevopell}. Since, for all $\f\in\mb V$, $g\mapsto\Lambda U(g)^*\f$ is weakly continuous, $\Lambda U(g)^*\f=[\ovl K\f](g)$ for a.a.\ $g\in G$, and $\ovl K\f\in\mf K^\rho$ (i.e.\  $(\ovl K\f)(gh)=\rho(h)^*(\ovl K\f)(g)$ for a.a.\ $g\in G$ and all $h\in H$) it follows that $\Lambda U(h)=\rho(h)\Lambda$ for all $h\in H$. From $\ms M(X)\equiv\ovl{\ms M}\big(q^{-1}(X)\big)$ one arrives at (\ref{kolmogorov}):
\begin{eqnarray*}
\sis{\f}{{\ms M}(X)\psi}&=&\int_X\int_H\sis{\Lambda U(gh)^*\f}{\Lambda U(gh)^*\psi}\,d\nu_H(h)\,d\nu_\Om(\ovl g) \\
&=&\int_X\int_H\,d\nu_H(h)\sis{\Lambda U(g)^*\f}{\Lambda U(g)^*\psi}\,d\nu_\Om(\ovl g)\\
&=&\int_X\sis{\Lambda U(g)^*\f}{\Lambda U(g)^*\psi}\,d\nu_\Om(\ovl g).
\end{eqnarray*}
The converse assertion of the first part of the claim is easily proven.

To see that $\Lambda(\mb V)$ is total in $\mc M^0$, assume that, on the contrary, there is an $\alpha\in\mc M^0$, $\alpha\ne 0$, such that $\sis{\alpha}{\Lambda\f}=0$ for all $\f\in\mb V$. Fix a measurable section $s:\,\Omega\to G$ for $q$ and an $X'\in\mc B(\Om)$ for which $0<\nu_\Om(X')=\nu_G(q^{-1}(X'))<\infty$. Define a nonzero $\psi\in\mf K^\rho$ by $\psi(g):=\chi_{q^{-1}(X')}(g)\rho(g^{-1}s\big(\ovl g)\big)\alpha$ for all $g\in G$. Now $\sis{\psi}{{\ms M}^\rho(X)K\f}=0$ for all $X\in\Sigma$ and $\f\in\mb V$ but this is not possible since $\mb V$ is dense in $\mc V$ and the linear span of vectors ${\ms M}^\rho(X)K\f$, $X\in\Sigma$, $\f\in\mc V$, is total in $\mf K^\rho$. Next we determine whether $\ms M$ is extreme in $\mc O^U(\Om)$ using the dilation $(\mf K^\rho,\ms M^\rho,\tj^\rho,K)$ (see, e.g.\ Theorem \ref{Scovarext}). 

Suppose that $\tilde D\in\mc L(\mf K^\rho)$ is such that $\tilde D\ms M^\rho(X)=\ms M^\rho(X)\tilde D$ for all $X\in\mc B(\Om)$ and $\tilde D\tj^\rho_g=\tj^\rho_g\tilde D$ for all $g\in G$. These conditions are equivalent with the following: $(\tilde Df)(g)=Df(g)$ for all $f\in\mf K^\rho$ and a.a.\ $g\in G$ with some operator $D\in\mc L(\mc M^0)$ such that $D\rho(h)=\rho(h)D$ for all $h\in H$ \cite[Theorem 1]{Dix2}. Pick $\f=\zeta_1\star\xi_1\in\mb V$ and $\psi=\zeta_2\star\xi_2\in\mb V$. Using
$$
W(\tau)\xi_r(\tau)=\sum_{j=1}^{n(\tau)} e_j(\tau)^*\otimes\big(\Psi_j(\tau)W(\tau)\xi_r(\tau)\big)=\sum_{j=1}^{n(\tau)} e_j(\tau)^*\otimes\big(\Lambda_j(\tau)\xi_r(\tau)\big)
$$
for $r=1,\,2$, one can calculate
\begin{eqnarray*}
&&\sis{K\f}{\tilde DK\psi}=\sis{\ovl K\f}{(I_{L^2(\nu_G)}\otimes D)\ovl K\psi}\\
&=&\sis{(\mc F^*\otimes I_{\mc M^0})W\f}{(I_{L^2(\nu_G)}\otimes D)(\mc F^*\otimes I_{\mc M^0})W\psi}=\sis{W\f}{(I_{\mc K_\oplus}\otimes D)W\psi}\\
&=&\int_{\hat G}\Sis{\zeta_1(\tau)\otimes W(\tau)\xi_1(\tau)}{\zeta_2(\tau)\otimes\big(I_{\mc K(\tau)^*}\otimes D\big)W(\tau)\xi_2(\tau)}\,d\mu_{\hat G}(\tau)\\
&=&\int_{\hat G}\sis{\zeta_1(\tau)}{\zeta_2(\tau)}\sum_{j=1}^{n(\tau)}\sis{\Lambda_j(\tau)\xi_1(\tau)}{D\Lambda_j(\tau)\xi_2(\tau)}\,d\mu_{\hat G}(\tau).
\end{eqnarray*}
It follows that the condition $K^*\tilde DK=0$ equals with (\ref{extcovobskerchar}). These results combined with Theorem \ref{Scovarext} prove the latter part of the claim.
\end{proof}

In the remainder of this section, we fix a separable Hilbert space $\mc K$ and a strongly continuous representation $G\ni g\mapsto u_g\in\mc U(\mc K)$. We let $\mc I_u^U(\Om)$ denote the convex set of covariant instruments
as described in Remark \ref{rem:kvanttiCP}. Especially, the associated observable $\sf M$ of $\Gamma$ belongs to $\mc O^U(\Om)$ so that we may apply Theorem \ref{extcovobsker} and equations \eqref{fiinollasisalla} and \eqref{fiinollakraus}. Note that parts of the following remark have been proven in \cite{carmeli2}.

\begin{rem}\label{instrimprimitivity}{\rm
According to Proposition \ref{prop:lcsc} (whose conditions are again satisfied), for any $\Gamma\in\mc I_u^U(\Om)$ there exist a Hilbert space $\mc M$, a strongly continuous representation $\pi:H\to\mc L(\mc M)$ giving rise to the canonical system of imprimitivity $(\mf K^\pi,\ms M^\pi,\tj^\pi)$ similarly as in the beginning of this section, and an isometry $Y:\,\mc V\to\mc K\otimes\mf K^\pi$ such that $YU(g)=\big(u_g\otimes\tj^\pi_g\big)Y$ for all $g\in G$ and
\begin{equation}\label{instrimprimitivity1}
\Gamma(b,X)=Y^*\big(b\otimes\ms M^\pi(X)\big)Y,\qquad X\in\mc B(\Om),\quad b\in\mc L(\mc K).
\end{equation}
Furthermore, the dilation $(\mf K^\pi,\ms M^\pi,Y)$ for $\Gamma$ is minimal. Note that $\mf K^\pi$ can be viewed as a closed subspace of $L^2(\nu_G;\mc M)$ such that it consists of functions $\psi\in L^2(\nu_G;\mc M)$ satisfying the condition $\psi(gh)=\pi(h)^*\psi(g)$ for a.a.\ $g\in G$ and all $h\in H$. The instrument $\Gamma$ of (\ref{instrimprimitivity1}) is easily seen to be extreme in $\mc I_u^U(\Om)$ if and only if, for an operator $D\in\mc L(\mc M)$ such that $D\pi(h)=\pi(h)D$ for all $h\in H$, the condition $Y^*(I_{\mc K}\otimes \tilde D)Y=0$, where $\tilde D\in\mc L(\mf K_\pi)$ is defined through $(\tilde Df)(g)=Df(g)$ for all $f\in\mf K^\pi$ and a.a.\ $g\in G$, implies $D=0$.
}
\end{rem}

Using Theorem \ref{extcovobsker}, we may give the following characterization to all instruments in the set $\mc I_u^U(\Om)$ conjectured in \cite[p.\ 1377]{holevo98} (where $H$ was assumed to be trivial $\{e\}$ and $U=u$). See also the result \cite[Theorem 5.2, Chapter 4]{davies} in the case of a compact symmetry group. Let the dense subspace $\mb V\subset\mc V$ be defined as earlier in this section.

\begin{theor}\label{covinstr}
Suppose that $\Gamma\in\mc I_u^U(\Om)$. There are linear operators $\ms B_j:\mb V\to\mc K$, $j<r+1$, $r\in\mb N\cup\{\infty\}$, such that, for all $\f\in\mb V$, $b\in\mc L(\mc K)$, and $h\in H$,
\begin{eqnarray}\label{Hinv}
&&\sum_{j=1}^r\sis{\ms B_j\f}{u_h^*bu_h\ms B_j\f}=\sum_{j=1}^r\sis{\ms B_jU(h)\f}{b\ms B_jU(h)\f},
\\
\label{ehto}
&&\int_\Om\sum_{j=1}^r\sis{\ms B_jU(g)^*\f}{\ms B_jU(g)^*\f}\,d\nu_\Om(\ovl g)=\|\f\|^2,
\end{eqnarray}
and $\Gamma$ has the decomposition
\begin{equation}\label{Icovchar}
\sis{\f}{\Gamma(b,X)\psi}=\int_X\sum_{j=1}^r\sis{u_g\ms B_jU(g)^*\f}{bu_g\ms B_jU(g)^*\psi}\,d\nu_\Om(\ovl g)
\end{equation}
for all $X\in\mc B(G)$, $\f,\,\psi\in\mb V$, and $b\in\mc L(\mc K)$. Conversely, given a map $\Gamma:\,\mc L(\mc K)\times\mc B(\Om)\to\mc L(\mc V)$ defined as in (\ref{Icovchar}) where the operators $\ms B_j:\mb V\to\mc K$ satisfy conditions (\ref{Hinv}) and (\ref{ehto}), then $\Gamma\in\mc I_u^U(\Om)$.
\end{theor}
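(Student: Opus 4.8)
The plan is to combine the structure of the associated observable $\ms M=\Gamma(I_\kil,\cdot)\in\mc O^U(\Om)$, supplied by Theorem~\ref{extcovobsker}, with the reduction of $\Gamma$ itself to a covariant channel sitting over the Na\u{\i}mark space of $\ms M$, supplied by Proposition~\ref{prop:lcsc}; the operators $\ms B_j$ of the statement will then be obtained by composing the Kolmogorov map of $\ms M$ with the Kraus operators of that channel. First I regard $\Gamma$ as the normal covariant CP map $S\in{\bf NCP}_\delta^U(s_1)$, $\delta=\beta\otimes\gamma$, $s_1=\sis{\,\cdot\,}{\,\cdot\,}$, with $\hil=\mb C$ and module the dense $U$-invariant subspace $\mb V\subseteq\mc V$ fixed before the theorem; since $\mc V$ is separable, $\Om$ is lcsc and $U$ is strongly continuous, the hypotheses of Propositions~\ref{prop:Ckiet} and~\ref{prop:lcsc} hold and the minimal Na\u{\i}mark space of $\ms M$ is separable. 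Theorem~\ref{extcovobsker} then gives a separable Hilbert space $\mc M^0$, a strongly continuous representation $\rho\colon H\to\mc U(\mc M^0)$ and a linear map $\Lambda\colon\mb V\to\mc M^0$ with total range such that $\Lambda U(h)=\rho(h)\Lambda$ for $h\in H$ and $\sis{\f}{\ms M(X)\psi}=\int_X\sis{\Lambda U(g)^*\f}{\Lambda U(g)^*\psi}\,d\nu_\Om(\ovl g)$. On the other hand, Proposition~\ref{prop:lcsc} produces a covariant quantum channel $\Phi_0\colon\lk\to\mc L(\mc M^0)$ with $\rho(h)^*\Phi_0(b)\rho(h)=\Phi_0(u_h^*bu_h)$ for $h\in H$, and reading~\eqref{fiinollasisalla} with $\hil=\mb C$ --- after identifying the two unitarily equivalent minimal Na\u{\i}mark dilations of $\ms M$, so that the Wigner rotations $y^\rho(g,\ovl g)$ in its two arguments cancel --- gives
\[
\sis{\f}{\Gamma(b,X)\psi}=\int_X\sis{\Lambda U(g)^*\f}{\Phi_0(u_g^*bu_g)\,\Lambda U(g)^*\psi}\,d\nu_\Om(\ovl g),\qquad b\in\lk,\ X\in\mc B(\Om),\ \f,\psi\in\mb V.
\]
Finally I fix a Kraus decomposition $\Phi_0(b)=\sum_{j=1}^r\ms A_j^*b\ms A_j$ ($\sigma$-strongly, $\ms A_j\in\mc L(\mc M^0;\kil)$, $r\in\mb N\cup\{\infty\}$) as in~\eqref{fiinollakraus}; then $\sum_{j=1}^r\ms A_j^*\ms A_j=\Phi_0(I_\kil)=I_{\mc M^0}$ and the covariance relation~\eqref{covarcondi} holds.

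The key step is to set $\ms B_j:=\ms A_j\Lambda\colon\mb V\to\kil$. Substituting the Kraus decomposition into the displayed formula for $\Gamma$ and using unitarity of $u_g$ (so that $\sis{x}{u_g^*bu_g\,y}=\sis{u_gx}{b\,u_gy}$) gives, for $\f,\psi\in\mb V$,
\[
\sis{\f}{\Gamma(b,X)\psi}=\int_X\sum_{j=1}^r\sis{u_g\ms A_j\Lambda U(g)^*\f}{b\,u_g\ms A_j\Lambda U(g)^*\psi}\,d\nu_\Om(\ovl g)=\int_X\sum_{j=1}^r\sis{u_g\ms B_jU(g)^*\f}{b\,u_g\ms B_jU(g)^*\psi}\,d\nu_\Om(\ovl g),
\]
which is~\eqref{Icovchar}. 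Although an individual summand $u_g\ms B_jU(g)^*\f$ depends on the representative $g$ of $\ovl g$, the sum over $j$ does not: using $\Lambda U(gh)^*=\rho(h)^*\Lambda U(g)^*$ for $h\in H$ and $u_{gh}=u_gu_h$ (the $\mb T$-valued cocycle $m$ being trivial here), one checks with~\eqref{covarcondi} that the $j$-sum computed at $gh$ agrees with that at $g$; hence the integrand in~\eqref{Icovchar} descends to a well-defined $\nu_\Om$-measurable function on $\Om=G/H$.

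It remains to read off~\eqref{ehto} and~\eqref{Hinv}. For~\eqref{ehto}, $\sum_{j=1}^r\sis{\ms B_jU(g)^*\f}{\ms B_jU(g)^*\f}=\sis{\Lambda U(g)^*\f}{\bigl(\sum_j\ms A_j^*\ms A_j\bigr)\Lambda U(g)^*\f}=\|\Lambda U(g)^*\f\|^2$, so integrating over $\Om$ gives $\sis{\f}{\ms M(\Om)\f}=\|\f\|^2$ by the Kolmogorov formula for $\ms M$ and $\ms M(\Om)=I_{\mc V}$. For~\eqref{Hinv}, with $h\in H$ and $\f\in\mb V$,
\[
\sum_{j=1}^r\sis{\ms B_j\f}{u_h^*bu_h\ms B_j\f}=\sis{\Lambda\f}{\Bigl(\sum_{j=1}^r\ms A_j^*u_h^*bu_h\ms A_j\Bigr)\Lambda\f}=\sis{\Lambda\f}{\Bigl(\sum_{j=1}^r\rho(h)^*\ms A_j^*b\ms A_j\rho(h)\Bigr)\Lambda\f},
\]
and, since $\rho(h)\Lambda=\Lambda U(h)$, the right-hand side equals $\sum_j\sis{\ms A_j\Lambda U(h)\f}{b\,\ms A_j\Lambda U(h)\f}=\sum_j\sis{\ms B_jU(h)\f}{b\,\ms B_jU(h)\f}$, which is~\eqref{Hinv}. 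For the converse, given operators $\ms B_j\colon\mb V\to\kil$ obeying~\eqref{Hinv} and~\eqref{ehto}, one verifies that~\eqref{Icovchar} defines a map $\Gamma\colon\lk\times\mc B(\Om)\to\mc L(\mc V)$ with the four properties of Remark~\ref{rem:kvanttiCP}: complete positivity of $b\mapsto\Gamma(b,X)$ from the identity $\sum_{i,k}\sis{\f_i}{\Gamma(b_i^*b_k,X)\f_k}=\int_X\sum_j\bigl\|\sum_i b_iu_g\ms B_jU(g)^*\f_i\bigr\|^2\,d\nu_\Om(\ovl g)\ge0$ and normality from $\sigma$-weak continuity of vector functionals under the integral sign; $\sigma$-additivity in $X$ by monotone/dominated convergence using~\eqref{ehto}; $\Gamma(I_\kil,\Om)=I_{\mc V}$ being~\eqref{ehto}; and covariance $\Gamma(u_gbu_g^*,gX)=U(g)\Gamma(b,X)U(g)^*$ from the substitution $g'\mapsto gg'$ together with $\nu_\Om$-invariance and $u_{gg'}=u_gu_{g'}$, where~\eqref{Hinv} guarantees section-independence of the integrand.

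I expect the main obstacle not to be a single estimate but the bookkeeping linking the two dilation pictures --- making rigorous the cancellation of the Wigner rotations between the $\Om$-picture of Proposition~\ref{prop:lcsc} and the $G$-picture of Theorem~\ref{extcovobsker}, identifying $\Lambda\f$ with the base-point datum $(\ovl K\f)(e)$ of the Na\u{\i}mark dilation of $\ms M$, and confirming that the individually section-dependent integrands of~\eqref{Icovchar} really assemble into genuine measurable functions on $G/H$. For the converse, the delicate point is making sense of~\eqref{Icovchar} as a bounded, $\sigma$-additive, normal operator measure when $r=\infty$ and $\nu_\Om(\Om)=\infty$, for which the Cauchy--Schwarz bound derived from~\eqref{ehto} is the essential tool.
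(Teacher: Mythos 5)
Your proposal is correct and follows essentially the same route as the paper: reduce to the observable marginal via Theorem \ref{extcovobsker}, invoke Proposition \ref{prop:lcsc} together with \eqref{fiinollasisalla}--\eqref{covarcondi} to extract the covariant channel $\Phi_0$ over $\mc M^0$ with its Kraus operators $\ms A_j$, and set $\ms B_j=\ms A_j\Lambda$; the cancellation of the Wigner rotations and the derivations of \eqref{Hinv} and \eqref{ehto} are exactly as in the paper's argument.
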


\begin{proof}
Let $\ms M\in\mc O_U(\Om)$ be the associate observable of $\Gamma$, i.e.,\ $\ms M(X)\equiv\Gamma(I_{\mc K},X)$. Let the operator $\Lambda:\mb V\to\mc M^0$ and the representation $\rho:\,H\to\mc U(\mc M^0)$ for $\ms M$ be as in Theorem \ref{extcovobsker} so that $\ms M$ has the covariant minimal dilation $(\mf K^\rho,\ms M^\rho,\tj^\rho,K)$ where $(K\f)(g)=\Lambda U(g)^*\f$ for all $\f\in\mb V$ and (almost) all $g\in G$. Fix a measurable section $s:\Om\to G$ and let $y^\rho:\,G\times\Om\to\mc U(\mc M)$, $(g,\om)\mapsto y^\rho(g,\om)=\rho\big(s(\om)^{-1}gs(g^{-1}\om)\big)$ be the corresponding Wigner rotation. By using the unitary map $U'$ of \eqref{Un kaava} we can define the isometry $K':\mc V\to L^2(\nu_\Om;\mc M^0)$, $(K'\f)(\ovl g):=y^\rho(g,\ovl g)\Lambda U(g)^*\f$ for all $g\in G$ and $\f\in\mb V$, and bring the imprimitivity system $(\mf K^\rho,\ms M^\rho,\tj^\rho)$ back to the form $\big(L^2(\nu_\Om;\mc M^0),\ms P,y^\rho)$ where $\ms P(X)\psi=\chi_X\psi$ and $(y^\rho_g\psi)(\om)=y^\rho(g,\om)\psi(g^{-1}\om)$ for all  $X\in\mc B(\Om)$, $\psi\in L^2(\nu_\Om;\mc M^0)$, $g\in G$, and $\nu_\Om$-a.a.\ $\om\in\Om$.

Assume that $\pi:\,H\to\mc U(\mc M)$ is a representation such that the entire instrument $\Gamma$ has the dilation described in Remark \ref{instrimprimitivity} and do the same trick as above for $(\mf K^\pi,\ms M^\pi,\tj^\pi)$. Hence, we get the isometry $Y':\mc V\to\mc K\otimes L^2(\nu_\Om;\mc M)\cong L^2(\nu_\Om;\mc K\otimes\mc M)$ by defining $(Y'\f)(\ovl g):=\big(I_\kil\otimes y^\pi(g,\ovl g)\big)(Y\f)(g)$ for all $\f\in\mc V$ and a.a.\ $g\in G$. Let $C:L^2(\nu_\Om;\mc M^0)\to L^2(\nu_\Om;\mc K\otimes\mc M)$ be the decomposable isometry of Propositions \ref{prop:Ckiet} and\ref{prop:lcsc} such that $Y'=CK'$ and $\ms A_j$, $j<r+1$, be the Kraus operators of \eqref{fiinollakraus} for the channel $\Phi_0$, $\Phi_0(b)=C_0^*(b\otimes I_{\mc M})C_0$, $b\in\mc L(\mc K)$. Define $\ms B_j:=\ms A_j\Lambda$. It follows from \eqref{fiinollasisalla} that, for all $\f,\,\psi\in\mb V$, $b\in\mc L(\mc K)$, and $X\in\mc B(\Om)$,
\begin{eqnarray*}
\sis{\f}{\Gamma(b,X)\psi}&=&\int_X\sis{(K'\f)(\ovl g)}{y^\rho(g,\ovl g)\Phi_0(u_g^*bu_g)y^\rho(g,\ovl g)^*(K'\psi)(\ovl g)}\,d\nu_\Om(\ovl g)\\
&=&\int_X\sis{\Lambda U(g)^*\f}{\Phi_0(u_g^*bu_g)\Lambda U(g)^*\psi}\,d\nu_\Om(\ovl g)\\
&=&\int_X\sum_{j=1}^r\sis{u_g\ms B_jU(g)^*\f}{bu_g\ms B_jU(g)^*\psi}\,d\nu_\Om(\ovl g).
\end{eqnarray*}
The conditions \eqref{Hinv} and \eqref{ehto} follow from the covariance condition \eqref{covarcondi} for $\Phi_0$ and the normalization of $\Gamma$, respectively, and the converse claim follows immediately.
\end{proof}

\subsection{Instruments covariant with respect to a square-integrable representation}

In this subsection, we investigate a particular application of the theory developed in the preceding section. We will notice that, in some physically motivated cases, the ``Kraus operators'' $\ms B_j$ appearing in the general form (\ref{Icovchar}) of a covariant instrument can be chosen to be bounded.

Throughout this section, let $\mc V$ and $\mc K$ be separable Hilbert spaces, $G$ a unimodular lcsc group, and $H$ its compact subgroup equipped with the Haar measure $\nu_H$ such that $\nu_H(H)=1$. Fix a Haar measure $\nu_G$ for $G$ and the  $G$-invariant Borel measure $\nu_\Om$ of $\Om=G/H$ such that (\ref{Hmitta}) holds. Assume that $W:G\to\mc U(\mc V)$ is a strongly continuous unitary representation with the property that there is a constant $d>0$ such that
$$
\int_G|\sis{\f}{W(g)\psi}|^2\,d\nu_G(g)=d
$$
for all unit vectors $\f,\,\psi\in\mc V$. Such a representation is called {\it square integrable}. Observables covariant with respect to a square-integrable representation have been studied, e.g.,\ in \cite{kiukas_etal2006}.

Let us fix another strongly continuous (projective) representation $G\ni g\mapsto u_g\in\mc U(\mc K)$ and concentrate on instruments $\Gamma\in\mc I_u^W(\Om)$. We have the following theorem as a direct consequence of Theorem \ref{covinstr} and results of \cite{kiukas_etal2006}.

\begin{theor}\label{squareint}
For any $\Gamma\in\mc I_u^W(\Om)$, there are Hilbert-Schmidt operators $\ms B_j:\mc V\to\mc K$, $j<r+1$, $r\in\mb N\cup\{\infty\}$, satisfying the condition (\ref{Hinv}) (with $U$ replaced by $W$) such that
\begin{equation}\label{summaehto}
\sum_{j=1}^r\tr{\ms B_j^*\ms B_j}=1/d
\end{equation}
and
\begin{equation}\label{squareintchar}
\sis{\f}{\Gamma(b,X)\f}=\int_X\sum_{j=1}^r\sis{u_g\ms B_jW(g)^*\f}{bu_g\ms B_jW(g)^*\f}\,d\nu_\Om(\ovl g)
\end{equation}
for all $\f\in\mc V$, $b\in\mc L(\mc K)$, and $X\in\mc B(\Om)$.
\end{theor}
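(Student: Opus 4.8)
The plan is to deduce Theorem \ref{squareint} from Theorem \ref{covinstr} by exploiting the extra rigidity that square-integrability imposes on the associated observable. First I would apply Theorem \ref{covinstr} to the given $\Gamma\in\mc I_u^W(\Om)$, obtaining linear operators $\ms B_j:\mb V\to\kil$, $j<r+1$, with the covariance relation \eqref{Hinv} (with $U$ replaced by $W$), the normalization \eqref{ehto}, and the representation \eqref{Icovchar}; putting $\f=\psi$ in \eqref{Icovchar} already yields \eqref{squareintchar}. Recalling the proof of Theorem \ref{covinstr}, these operators factor as $\ms B_j=\ms A_j\Lambda$, where $\Lambda:\mb V\to\mc M^0$ is the operator attached by Theorem \ref{extcovobsker} to the associated observable $\ms M=\Gamma(I_\kil,\cdot)\in\mc O^W(\Om)$, and the $\ms A_j\in\mc L(\mc M^0;\kil)$ are Kraus operators of the covariant channel $\Phi_0$, so that $\sum_{j<r+1}\ms A_j^*\ms A_j=I_{\mc M^0}$ and hence each $\|\ms A_j\|\le1$. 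Using this last relation one gets $\sum_{j<r+1}\sis{\ms B_j\f}{\ms B_j\psi}=\sis{\Lambda\f}{\Lambda\psi}$ for all $\f,\,\psi\in\mb V$, and therefore it suffices to prove that $\Lambda$ extends to a Hilbert--Schmidt operator $\mc V\to\mc M^0$ with $\|\Lambda\|_{\mr{HS}}^2=\tr{\Lambda^*\Lambda}=1/d$: then each $\ms B_j=\ms A_j\Lambda$ extends to a Hilbert--Schmidt operator $\mc V\to\kil$, and summing the identities $\tr{\ms B_j^*\ms B_j}=\tr{\Lambda^*\ms A_j^*\ms A_j\Lambda}$ over $j$ (using normality of the trace and $\sum_j\ms A_j^*\ms A_j=I_{\mc M^0}$) gives $\sum_{j<r+1}\tr{\ms B_j^*\ms B_j}=\tr{\Lambda^*\Lambda}=1/d$, which is \eqref{summaehto}.

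The heart of the argument is thus to control $\Lambda$ by square-integrability, and here I would invoke the structure theory of \cite{kiukas_etal2006}. By Remark \ref{imprimitivity} the observable $\ms M$ lifts to a $W$-covariant normalized positive operator measure $\ovl{\ms M}:\mc B(G)\to\mc L(\mc V)$ on the group itself, $\sis{\f}{\ovl{\ms M}(Z)\psi}=\int_Z\sis{\Lambda W(g)^*\f}{\Lambda W(g)^*\psi}\,d\nu_G(g)$, with $\ms M(X)=\ovl{\ms M}\big(q^{-1}(X)\big)$ and $\ovl{\ms M}(G)=I_{\mc V}$ (since the corresponding $\ovl K$ is an isometry). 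Its "density field" is $g\mapsto W(g)TW(g)^*$, where $T$ is the positive quadratic form on $\mb V$ given by $\sis{\Lambda\,\cdot\,}{\Lambda\,\cdot\,}$. For a square-integrable $W$ the orthogonality relations — which in the normalization of our hypothesis read $\int_G\ktb{W(g)\eta}{W(g)\eta}\,d\nu_G(g)=d\,\|\eta\|^2\,I_{\mc V}$ for every $\eta\in\mc V$ — yield, after decomposing $T=\sum_kt_k\ktb{\eta_k}{\eta_k}$ and summing termwise, $\int_GW(g)TW(g)^*\,d\nu_G(g)=d\,\tr{T}\,I_{\mc V}$. Comparing with $\ovl{\ms M}(G)=I_{\mc V}$ forces $d\,\tr{T}=1$, i.e.\ $\tr{\Lambda^*\Lambda}=1/d<\infty$; in particular the form $\sis{\Lambda\,\cdot\,}{\Lambda\,\cdot\,}$ is bounded, so $\Lambda$ extends to a Hilbert--Schmidt operator on $\mc V$. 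Conditions \eqref{Hinv} and \eqref{ehto} are inherited verbatim from Theorem \ref{covinstr}, and \eqref{squareintchar} is \eqref{Icovchar} on the diagonal, which finishes the proof.

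I expect the main obstacle to be the rigorous justification of the operator identity $\int_GW(g)TW(g)^*\,d\nu_G(g)=d\,\tr{T}\,I_{\mc V}$ when $T$ is a priori only a positive (possibly unbounded) quadratic form on the dense domain $\mb V$, rather than a trace-class operator; this is precisely where square-integrability and the results of \cite{kiukas_etal2006} do the real work. Concretely, one first uses the normalization $\int_G\sis{\Lambda W(g)^*\f}{\Lambda W(g)^*\f}\,d\nu_G(g)=\|\f\|^2$ (equivalently $\ovl{\ms M}(G)=I_{\mc V}$) to guarantee that the weak integral $\int_GW(g)TW(g)^*\,d\nu_G(g)$ exists as the bounded operator $I_{\mc V}$, then pairs it with a fixed unit vector $\f$ and evaluates the left-hand side by applying the square-integrability identity to the rank-one pieces of $T$, the interchange of sum and integral being legitimate by monotone convergence as all terms are non-negative; the equality $d\,\tr{T}\,\|\f\|^2=\|\f\|^2$ then gives $\tr{\Lambda^*\Lambda}=1/d$. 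Once $\Lambda$ is known to be Hilbert--Schmidt, the remaining steps — extending $\ms B_j=\ms A_j\Lambda$ to $\mc V$, verifying the trace identity \eqref{summaehto}, and recording that \eqref{Hinv}, \eqref{ehto} and \eqref{squareintchar} are carried over from Theorem \ref{covinstr} — are routine.
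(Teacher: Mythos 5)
Your overall strategy coincides with the paper's: apply Theorem \ref{covinstr}, then use square-integrability to control the marginal observable and upgrade the $\ms B_j$ to Hilbert--Schmidt operators satisfying \eqref{summaehto}. The reduction via $\ms B_j=\ms A_j\Lambda$ and $\sum_j\ms A_j^*\ms A_j=I_{\mc M^0}$ is correct and slightly cleaner than the paper's direct treatment of $\sum_j\ms B_j^*\ms B_j$. However, the key step is circular as written. You derive $\int_GW(g)TW(g)^*\,d\nu_G(g)=d\,\tr{T}\,I_{\mc V}$ by ``decomposing $T=\sum_kt_k\ktb{\eta_k}{\eta_k}$ and summing termwise'', and your subsequent ``concrete'' justification again applies the orthogonality relation to ``the rank-one pieces of $T$''. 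But at this stage $T$ is only the positive sesquilinear form $\sis{\Lambda\,\cdot\,}{\Lambda\,\cdot\,}$ on the dense domain $\mb V$; it admits a rank-one decomposition with vectors $\eta_k\in\mc V$ and summable coefficients only if it is already known to be the form of a positive trace-class operator --- which is precisely the conclusion you are trying to reach. The normalization $\ovl{\ms M}(G)=I_{\mc V}$ does not by itself license that decomposition, and monotone convergence cannot repair a decomposition that has not been established.

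The paper closes exactly this gap by citing \cite[Theorem 3]{kiukas_etal2006}: for a square-integrable $W$, every normalized covariant observable $\ovl{\ms M}\in\mc O^W(G)$ automatically has the form $\ovl{\ms M}(Z)=d^{-1}\int_ZW(g)\ovl SW(g)^*\,d\nu_G(g)$ with $\ovl S$ positive of trace one. Averaging $\ovl S$ over the compact subgroup $H$ gives a trace-one positive operator $S$ with $\sis{\f}{\ms M(X)\f}=d^{-1}\int_X\sis{W(g)^*\f}{SW(g)^*\f}\,d\nu_\Om(\ovl g)$; comparing this with the density $\sum_j\|\ms B_jW(g)^*\f\|^2$ coming from Theorem \ref{covinstr}, on a countable dense set of $\f$ and for almost all $g$, and using that the right-hand form is bounded, yields $\sum_j\ms B_j^*\ms B_j=d^{-1}S$ on $\mb V$. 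Boundedness, the Hilbert--Schmidt property of each $\ms B_j$, and \eqref{summaehto} then all follow. Your proof becomes correct once the unproven operator identity is replaced by this citation (or by an independent proof of it); as it stands, that identity is assumed rather than proved.
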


\begin{proof}
Let $\Gamma\in\mc I_u^W(\Om)$. The existence of the operators $\ms B_j$ defined on a dense $W$-invariant subspace $\mb V\subset\mc V$ such that (\ref{Hinv}) and (\ref{squareintchar}) hold  is already guaranteed by Theorem \ref{covinstr}. Hence, it suffices to show that condition (\ref{summaehto}) is met, from which it follows immediately that the operators $\ms B_j$ extend to Hilbert-Schmidt operators.

Denote the observable marginal $\Gamma(I_{\mc K},\cdot)$ of $\Gamma$ by $\ms M$. Since $\ms M\in\mc O^W(\Om)$, we may associate to it, as discussed in Remark \ref{imprimitivity}, an observable $\ovl{\ms M}\in\mc O^W(G)$. According to \cite[Theorem 3]{kiukas_etal2006}, there is a positive operator $\ovl S$ of trace 1 on $\mc V$ such that $\ovl{\ms M}(Z)=d^{-1}\int_ZW(g)\ovl SW(g)^*\,d\nu_G(g)$ (weakly) for all $Z\in\mc B(G)$. Defining a new trace-1 positive operator $S:=\int_HW(h)\ovl SW(h)^*\,d\nu_H(h)$ (weakly), we may write for any $\f\in\mb V$ and $X\in\mc B(\Om)$
\begin{eqnarray*}
\int_X\sum_{j=1}^r\sis{\ms B_jW(g)^*\f}{\ms B_jW(g)^*\f}\,d\nu_\Om(\ovl g)&=&\sis{\f}{\ms M(X)\f}\\
&=&\frac{1}{d}\int_X\sis{W(g)^*\f}{SW(g)^*\f}\,d\nu_\Om(\ovl g).
\end{eqnarray*}
This implies that, for each $\f\in\mb V$, there exists a null set $N_\f\subset G$ such that 
\begin{equation}\label{qjvjsivhjeioe2}
\sum_{j=1}^r\sis{\ms B_jW(g)^*\f}{\ms B_jW(g)^*\f}=\frac{1}{d}\sis{W(g)^*\f}{SW(g)^*\f}
\end{equation}
for all $g\in G\setminus N_\f$. Let $\mb D\subset\mb V$ be a countable set dense in $\mc V$ and define a set $N:=\cup_{\f\in\mb D}N_\f$ of Haar measure zero. Thus \eqref{qjvjsivhjeioe2} holds for all $g\in G\setminus N$ and $\f\in\mb D$. Since the both sides of \eqref{qjvjsivhjeioe2} can be interpreted as positive quadratic (and thus sesquilinear) forms on $\mb V$ and the form on the right hand side is bounded, it follows that $\sum_{j=1}^r\ms B_j^*\ms B_j=d^{-1}S$ and, for any $j<r+1$, the operator $\ms B_j$ is Hilbert-Schmidt. This proves the claim. 
\end{proof}

Suppose that $\Gamma\in\mc I_u^W(G)$ and $\ms B_j$'s are as in Theorem \ref{covinstr}. As we saw in the proof of Theorem \ref{squareint}, there is a trace-1 positive operator $S$ on $\mc V$ commuting with the subrepresentation $H\ni h\mapsto W(h)\in\mc U(\mc V)$ such that
$$
\sum_{j=1}^r\ms B_j^*\ms B_j=\frac1{d}S,
$$
and the observable $\ms M\in\mc O^W(\Om)$ associated with the instrument $\Gamma$ is given by
\begin{equation}\label{sqsuure}
\sis{\f}{\ms M(X)\f}=\frac1{d}\int_X\sis{W(g)^*\f}{SW(g)^*\f}\,d\nu_H(\ovl g),
\qquad \f\in\mc V,\quad X\in\mc B(\Om).
\end{equation}

Theorem \ref{squareint} tells us that, whenever we have some $\ms M\in\mc O^W(G)$ with the trace-class operator $S$ such that (\ref{sqsuure}) holds, we can determine any $\ms M$-compatible instrument $\Gamma\in\mc I_u^W(G)$ (associated to a  representation $G\ni g\mapsto u_g\in\mc U(\mc K)$) by using a decomposition $S=d\sum_{j=1}^r\ms B_j^*\ms B_j$ with Hilbert-Schmidt operators $\ms B_j:\,\mc V\to\mc K$ satisfying \eqref{Hinv} and defining $\Gamma$ according to (\ref{squareintchar}).

\begin{ex}\rm
In this example, we consider the covariant phase-space instruments associated with covariant phase-space observables. The phase space of an $n$-dimensional physical system is modelled by $\mb R^{2n}$. We denote the elements of $\mb R^{2n}$ by pairs $(\vek q,\vek p)$ of $\mb R^n$-vectors. The Hilbert space $\mc V$ associated with the system is $L^2(\mb R^n)$ (equipped with the $n$-fold Lebesgue measure). As a symmetry group, one has the Heisenberg group $G=\mb R^{2n}\times\mb T$ with the group law
$$
(\vek q,\vek p,t)(\vek q',\vek p',t')=\big(\vek q+\vek q',\,\vek p+\vek p',\,tt'e^{\frac{i}{2}(\vek p\cdot\vek q'-\vek q\cdot\vek p')}\big).
$$
The subgroup $H$ consists of elements $(\bs 0,\bs 0,t)$, $t\in\mb T$, so that $\Om=\mb R^{2n}$.

We define the {\it Weyl representation} $W:\,G\to\mc U\big(L^2(\mb R^n)\big)$ inducing the phase-space translations through
$$
\big(W(\vek q,\vek p,t)\f\big)(\vek x)=\ovl te^{\frac{i}{2}\vek q\cdot\vek p}e^{i\vek p\cdot\vek x}\f(\vek x+\vek q)
$$
for all $\f\in L^2(\mb R^n)$ and a.a.\ $\vek x\in\mb R^n$. The Weyl representation is square integrable with $d=(2\pi)^n$. In fact, this representation is also irreducible. Note that all operators on $L^2(\mb R^n)$ commute with the subrepresentation $\mb T\ni t\mapsto W(\bs 0,\bs 0,t)=\ovl tI_{L^2(\mb R^n)}$.

The elements of $\mc O^W(\Om)$ (resp.\ $\mc I_W^W(\Om)$) are called as {\it covariant phase-space observables} (resp.\ {\it instruments}). Any such observable $\ms M_S$ is defined by a positive trace-1 operator $S$ on $L^2(\mb R^n)$ and the formula
$$
\ms M_S(X)=(2\pi)^{-n}\int_XW_0(\vek q,\vek p)SW_0(\vek q,\vek p)^*\,d\vek q\,d\vek p,\quad X\in\mc B(\mb R^{2n}),
$$
(as in (\ref{sqsuure})) where $W_0(\vek q,\vek p):=W(\vek q,\vek p,1)$ for all $\vek q,\,\vek p\in\mb R^n$ and the integral is defined $\sigma$-weakly. Now the structure of any instrument $\Gamma\in\mc I_W^W(\Om)$ whose associate observable is $\ms M_S$ is determined by a decomposition $S=(2\pi)^n\sum_{j=1}^r\ms B_j^*\ms B_j$, $r\in\mb N\cup\{\infty\}$ (where $\ms B_j\in\mc L\big(L^2(\mb R^n)\big)$ are Hilbert-Schmidt operators) and
$$
\Gamma(b,X)=(2\pi)^{-n}\int_X\sum_{j=1}^rW_0(\vek q,\vek p)\ms B_j^*W_0(\vek q,\vek p)^*bW_0(\vek q,\vek p)\ms B_jW_0(\vek q,\vek p)^*\,d\vek q\,d\vek p
$$
($\sigma$-weakly) for all $X\in\mc B(\mb R^{2n})$ and $b\in\mc L\big(L^2(\mb R^n)\big)$.
\end{ex}

\section{Conclusions and discussion}

In this paper, we have studied covariant positive (sesquilinear form valued) kernels in the context of modules and characterized (essentially all of) the extreme points of convex sets of such maps. Particularly, we have concentrated on completely positive maps  in the case where the algebras involved are $W^*$-algebras. A physically relevant corollary of these results is the characterization of the extremal (generalized) covariant quantum instruments. We have seen that covariant instruments can be dilated into canonical systems of imprimitivity when the value space of the instruments is a transitive space of a lcsc symmetry group. Finally, we have discussed in more detail the standard covariant quantum observables and instruments whose value space is a transitive $G$-space $G/H$ where the stability subgroup $H$ is compact and the symmetry group $G$ is unimodular and of type I. Theorems \ref{extcovobsker} and \ref{covinstr} give the general structure for such covariant observables and instruments.

It should be pointed out that we can generalize Theorem \ref{covinstr} for covariant instruments $\Gamma\in\mc I_u^U(\Om)$ (with separable spaces $\mc V$ and $\mc K$) whenever $\Om$ is a transitive space of a lcsc group $G$ and the associated observables $\ms M\in\mc O^U(\Om)$ have the structure as in \eqref{kolmogorov} with a dense $U$-invariant domain $\mb V\subset\mc V$ and a linear map $\Lambda:\mb V\to\mc M^0$ intertwining $U|_H$ with some strongly continuous representation $\rho:H\to\mc U(\mc M^0)$. Indeed, as in the proof of Theorem \ref{covinstr}, using Proposition \ref{prop:lcsc} and \eqref{fiinollasisalla}, one obtains the structure \eqref{Icovchar} for any such $\Gamma\in\mc I_u^U(\Om)$. A particular example where these requirements are met is the case of a lcsc Abelian symmetry group and a general transitive value space, see \cite{CaDeTo2004, haapasalo}. Thus, especially, when $U:\,G\to\mc U(\mc V)$ and $G\ni g\mapsto u_g\in\mc U(\mc K)$ are strongly continuous unitary representations of a lcsc Abelian group $G$ and $\Om$ is any transitive $G$-space, each $\Gamma\in\mc I_u^U(\Om)$ has the structure given in \eqref{Icovchar}. The conditions on $\mc O^U(G)$ (note the trivial stability subgroup) are also satisfied in the case of a non-unimodular lcsc symmetry group and an irreducible (projective) unitary representation $U:G\to\mc U(\mc V)$ \cite{kiukas2006}, meaning that, for any strongly continuous unitary representation $G\ni g\mapsto u_g\in\mc U(\mc K)$, we have a similar structure theorem as Theorem \ref{covinstr} for covariant instruments $\Gamma\in\mc I_u^U(G)$.

To extend the results of Section \ref{covariantobs} to the general case of a lcsc (possibly non-unimodular) symmetry group $G$ (of type I) and a value space $G/H$ with a non-compact stability subgroup $H$, some issues have to be taken into account. For example, the Fourier-Plancherel transformation played the central role in our characterization. However, partial solutions exist, as shown in \cite{CaDeTo2004, haapasalo} in the case of Abelian symmetries, and in \cite{kiukas2006} in the case of a non-unimodular group and an irreducible representation. In our study, the stability subgroup was compact just for the sake of convenience and for the direct link to observables and instruments whose value space is the symmetry group itself (as in \cite{holevotypeI,holevo98}) but one could expect that a generalization to the case of a non-compact subgroup exists. This is, indeed, a natural and physically well-motivated direction for future research on covariant quantum devices. Moreover, there is a possibility of studying observables and instruments with value spaces that are not transitive spaces of the symmetry group. The problem in these settings is how to (measurably) `stitch up' such a value space from the transitive orbits of the group, since the structure of covariant CP maps on individual orbits of positive measure can essentially be solved, as showed in Section \ref{quantumCP}. Unfortunately, the orbits are usually zero measurable and the direct reduction to orbits fails \cite{varadarajan}.

It still remains to be studied to what extent the results obtained in Section \ref{covariantobs} can be generalized to the case of a covariant CP map defined in the context of a general $\mc A$-module $\mb V$ instead of a simple Hilbert space $\mc V$ (a $\mb C$-module). An obvious problem that arises, when trying to generalize Theorem \ref{covinstr} into this framework, is the proper definition of a dense submodule $\mb V_0\subset\mb V$ such that, for any normal covariant CP-map $S:\mc L(\mc K)\otimes L^\infty(\nu)\to S_{\mc L(\hil)}(\mb V)$, one can write
$$
S_{b\otimes f}(v,w)=\int_\Om f(\ovl g)\sum_{j=1}^r\big(u_g\ms B_jU(g^{-1})v\big)^*bu_g\ms B_jU(g^{-1})w\,d\nu(\ovl g)
$$
($\sigma$-strongly) for all $b\in\mc L(\mc K)$, $f\in L^\infty(\nu)$, and $v,\,w\in\mb V_0$. Obtaining such a structure result for covariant CP-maps seems to depend a lot on the structure of the module $\mb V$ which is, in this paper, not fixed. One might think that in particular examples of the module $\mb V$ this result should be valid.

\section*{Appendix: An auxiliary lemma}

We say that two measures $\nu$ and $\nu'$ on a measurable space $(\Om,\Sigma)$ are equivalent and write $\nu\sim\nu'$ if $\nu$ and $\nu'$ are absolutely continuous with respect to each other; we let $d\nu'/d\nu$ denote the Radon-Nikod\'ym derivative of $\nu'$ with respect to $\nu$. For any $\sigma$-isomorphism $T:\Om\to\Om$, i.e.,\ invertible $\Sigma$-measurable map $T:\Om\to\Om$ such that $T^{-1}$ is also $\Sigma$-measurable, we denote the measure $X\mapsto\nu\big(T(X)\big)$ by $\nu_T$.

\begin{lemma}\label{covariantoperators}
Suppose that $T:\,\Om\to\Om$ is a $\sigma$-isomorphism and $\nu_T\sim\nu$. For an operator $B\in\mc L(\hil_\oplus)$, $\hil_\oplus=\int_\Om^\oplus\hil_{n(\om)}\,d\nu(\om)$, the condition
$$
B\widehat{\chi_X}=\widehat{\chi_{T(X)}}B
$$
holds for all $X\in\Sigma$ if and only if there is a weakly $\nu$-measurable\footnote{That is, for all $\f,\,\psi\in\hil_\oplus$, the maps $\Om\ni\omega\mapsto\langle\f(\omega)|B_T(\omega)\psi(T^{-1}(\omega))\rangle\in\mb C$ are $(\Sigma_\nu,\mc B(\mb C))$-measurable;
here $\Sigma_\nu$ is the $\nu$-completion of $\Sigma$ and $\mc B(\Gamma)$ denotes the Borel $\sigma$-algebra of any topological space $\Gamma$.} field $\om\mapsto B_T(\om)$ of operators $B_T(\om):\hil_{n(T^{-1}(\om))}\to\hil_{n(\om)}$ such that
$$
\nu\mr{-ess}\,\sup_{\om\in\Om}\|B_T(\om)\|<\infty
$$
and
\begin{eqnarray}
(B\psi)(\om)&=&B_T(\om)\psi\big(T^{-1}(\om)\big)\sqrt{\frac{d\nu_{T^{-1}}(\om)}{d\nu(\om)}},\label{covariantoperators1}\\
(B^*\psi)(\om)&=&B_T\big(T(\om)\big)\psi\big(T(\om)\big)\sqrt{\frac{d\nu_T(\om)}{d\nu(\om)}}\label{covariantoperators2}
\end{eqnarray}
for all $\psi\in\hil_\oplus$ and $\nu$-a.a.\ $\om\in\Om$. Furthermore, $B$ is unitary if and only if $n\big(T(\om)\big)=n(\om)$ and $B_T(\om):\hil_{n(\om)}\to\hil_{n(\om)}$ is unitary for $\nu$-a.a.\ $\om\in\Om$.
\end{lemma}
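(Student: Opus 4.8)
The plan is to conjugate away the base-space motion $T$ by a single unitary on an ambient homogeneous direct integral, and then apply the standard identification of the commutant of the diagonal algebra with the decomposable operators (Remark \ref{vonNeumann2}). Fix a separable infinite-dimensional Hilbert space $\mc H_\infty$ with an orthonormal basis $(e_k)_{k\in\mb N}$, put $\hil_n:=\ovl{\mr{lin}}\{e_1,\dots,e_n\}$ for $n<\infty$ and $\hil_\infty:=\mc H_\infty$, and identify $\hil_\oplus$ with the closed subspace of $L^2(\nu;\mc H_\infty)=\int_\Om^\oplus\mc H_\infty\,d\nu(\om)$ consisting of those $\psi$ with $\psi(\om)\in\hil_{n(\om)}$ for $\nu$-a.a.\ $\om$. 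The orthogonal projection $P=\int_\Om^\oplus P(\om)\,d\nu(\om)$ onto $\hil_\oplus$, where $P(\om)$ is the projection of $\mc H_\infty$ onto $\hil_{n(\om)}$, is decomposable and thus commutes with every multiplication operator $\hat f$, $f\in L^\infty(\nu)$. On $L^2(\nu;\mc H_\infty)$ I would introduce the operator $W$ defined by $(W\psi)(\om):=\psi\big(T^{-1}(\om)\big)\sqrt{d\nu_{T^{-1}}/d\nu(\om)}$; since $\nu_T\sim\nu$ forces $\nu_{T^{-1}}\sim\nu$, the change-of-variables identity $\int g\,d\nu_{T^{-1}}=\int (g\circ T)\,d\nu$ shows that $W$ is unitary with inverse $(W^*\psi)(\om)=\psi\big(T(\om)\big)\sqrt{d\nu_T/d\nu(\om)}$, that $W\hat fW^*=\widehat{f\circ T^{-1}}$ (so $W\widehat{\chi_X}W^*=\widehat{\chi_{T(X)}}$ and $W^*\widehat{\chi_{T(X)}}=\widehat{\chi_X}W^*$), and that $W^*PW=\int_\Om^\oplus P\big(T(\om)\big)\,d\nu(\om)$.

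For the direction asserting that a field induces such a $B$, I would simply set $B$ to be the operator given by \eqref{covariantoperators1}. Boundedness with $\|B\|\le\nu\mr{-ess}\,\sup_\om\|B_T(\om)\|$ and the intertwining relation $B\widehat{\chi_X}=\widehat{\chi_{T(X)}}B$ are immediate from the defining formula, and the expression \eqref{covariantoperators2} for $B^*$ follows from the same change of variables together with the chain-rule identity $\frac{d\nu_{T^{-1}}}{d\nu}\big(T(\om)\big)\cdot\frac{d\nu_T}{d\nu}(\om)=1$ and the fact that $B_T(\om)\in\mc L\big(\hil_{n(T^{-1}(\om))};\hil_{n(\om)}\big)$.

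For the converse, I would extend $B$ to $\hat B:=BP\in\mc L\big(L^2(\nu;\mc H_\infty)\big)$, with $\|\hat B\|=\|B\|$. Because $P$ commutes with each $\widehat{\chi_X}$, the hypothesis $B\widehat{\chi_X}=\widehat{\chi_{T(X)}}B$ lifts to $\hat B\widehat{\chi_X}=\widehat{\chi_{T(X)}}\hat B$, so that $C:=W^*\hat B$ commutes with every $\widehat{\chi_X}$, hence with the whole diagonal algebra; by Remark \ref{vonNeumann2} $C$ is decomposable, $C=\int_\Om^\oplus C(\om)\,d\nu(\om)$ with $\nu\mr{-ess}\,\sup_\om\|C(\om)\|=\|C\|=\|B\|$. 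Unwinding $\hat B=WC$ gives $(\hat B\psi)(\om)=C\big(T^{-1}(\om)\big)\psi\big(T^{-1}(\om)\big)\sqrt{d\nu_{T^{-1}}/d\nu(\om)}$, so on $\hil_\oplus$ one defines $B_T(\om):=C\big(T^{-1}(\om)\big)$; this field is weakly $\nu$-measurable because $T^{-1}$ is $\Sigma$-measurable and $\nu_T\sim\nu$, so precomposition with $T^{-1}$ preserves weak measurability with respect to the $\nu$-completion $\Sigma_\nu$. The relations $\hat BP=\hat B$ and $P\hat B=\hat B$ become, using $W^*PW=\int_\Om^\oplus P\big(T(\om)\big)\,d\nu(\om)$, the pointwise identities $C(\om)P(\om)=C(\om)$ and $P\big(T(\om)\big)C(\om)=C(\om)$ for $\nu$-a.a.\ $\om$; hence each $B_T(\om)$ genuinely maps $\hil_{n(T^{-1}(\om))}$ into $\hil_{n(\om)}$, which establishes \eqref{covariantoperators1}, and \eqref{covariantoperators2} is then read off from $\hat B^*=C^*W^*$.

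Finally, for the unitarity criterion I would compute $B^*B$ and $BB^*$ fibrewise from \eqref{covariantoperators1}--\eqref{covariantoperators2}: the Radon-Nikod\'ym densities cancel by the chain rule and one gets $(B^*B\psi)(\om)=B_T\big(T(\om)\big)^*B_T\big(T(\om)\big)\psi(\om)$ and $(BB^*\psi)(\om)=B_T(\om)B_T(\om)^*\psi(\om)$ on the appropriate fibres. Therefore $B^*B=I_{\hil_\oplus}$ precisely when every $B_T(\om)$ is an isometry of $\hil_{n(T^{-1}(\om))}$ into $\hil_{n(\om)}$, and $BB^*=I_{\hil_\oplus}$ precisely when every $B_T(\om)$ is surjective onto $\hil_{n(\om)}$; combining, $B$ is unitary iff each $B_T(\om)$ is unitary, which is possible only when $n\big(T(\om)\big)=n(\om)$ for $\nu$-a.a.\ $\om$, while the converse implication is immediate. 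I expect the only real obstacle to be the bookkeeping: keeping the several Radon-Nikod\'ym chain-rule identities and the interplay of the multiplicity function $n$ with $T$ consistent throughout, and checking the weak $\Sigma_\nu$-measurability of $\om\mapsto C\big(T^{-1}(\om)\big)$; everything else is routine direct-integral manipulation of the kind recalled in Remark \ref{vonNeumann2}.
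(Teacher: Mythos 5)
Your proof is correct and follows essentially the same route as the paper's: embed $\hil_\oplus$ into the homogeneous direct integral $L^2(\nu)\otimes\hil_\infty$, conjugate by the weighted composition unitary, and invoke the identification of the commutant of the diagonal algebra with the decomposable operators. The only cosmetic differences are that the paper twists by $K_T$ on the right ($\tilde B_T=\tilde BK_T$) rather than by $W^*$ on the left, and obtains the adjoint formula \eqref{covariantoperators2} by applying the first part to $B^*$ and comparing inner products instead of reading it off from $\hat B^*=C^*W^*$.
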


\begin{proof}
Define operators $K_T,\,K_{T^{-1}}\in\mc L\big(L^2(\nu)\otimes\hil_\infty)$ through
\begin{eqnarray*}
(K_T\psi)(\om)&=&\psi\big(T(\om)\big)\sqrt{\frac{d\nu_T(\om)}{d\nu(\om)}},\\
(K_{T^{-1}}\psi)(\om)&=&\psi\big(T^{-1}(\om)\big)\sqrt{\frac{d\nu_{T^{-1}}(\om)}{d\nu(\om)}}
\end{eqnarray*}
for all $\psi\in L^2(\nu)\otimes\hil_\infty$ and $\nu$-a.a.\ $\om\in\Om$. These operators are unitary; a direct calculation shows that $\|K_T\psi\|=\|\psi\|$ for all $\psi\in L^2(\nu)\otimes\hil_\infty$ and $K_T^*=K_{T^{-1}}=(K_T)^{-1}$.

The direct integral space $\hil_\oplus$ can be viewed as a closed subspace of $L^2(\nu)\otimes\hil_\infty$ (see, Remark \ref{vonNeumann2}). Indeed, for all $n\in\{0\}\cup\mb N\cup\{\infty\}$, let $Q_n:\,\hil_n\to\hil_\infty$ be an isometry and define a decomposable isometry $Q:\,\hil_\oplus\to L^2(\nu)\otimes\hil_\infty$ via $(Q\psi)(\omega):=Q(\omega)\psi(\omega)$, $\psi\in\hil_\oplus$, where $Q(\omega):=Q_{n(\omega)}$. Let $\tilde B=QBQ^*\in\mc L\big(L^2(\nu)\otimes\hil_\infty\big)$. For each $f\in L^\infty(\nu)$, we consider $\hat f$ as a multiplicative operator of both $\hil_\oplus$ and $L^2(\nu)\otimes\hil_\infty$, i.e.,\ we identify $Q\hat fQ^*$ with $\hat f$. Direct calculation shows that $(\tilde BK_T)\widehat{\chi_X}=\widehat{\chi_X}(\tilde BK_T)$ for all $X\in\Sigma$. This implies that the operator $\tilde B_T=\tilde BK_T$ is decomposable, $\tilde B_T=\int_\Om^\oplus\tilde B_T(\om)\,d\nu(\om)$ \cite[Theorem 1, p.\ 187]{Dix2}. Since $Q^*Q=I_{\hil_\oplus}$ and thus $B=Q^*\tilde BQ$ we have
\begin{eqnarray*}
(B\psi)(\om)&=&(Q^*\tilde B_TK_T^*Q\psi)(\om)=Q^*(\om)(\tilde B_TK_T^*Q\psi)(\om)\\
&=&Q^*(\om)\tilde B_T(\om)(K_T^*Q\psi)(\om)\\
&=&Q^*(\om)\tilde B_T(\om)(Q\psi)\big(T^{-1}(\om)\big)\sqrt{\frac{d\nu_{T^{-1}}(\om)}{d\nu(\om)}}\\
&=&Q^*(\om)\tilde B_T(\om)Q\big(T^{-1}(\om)\big)\psi\big(T^{-1}(\om)\big)
\sqrt{\frac{d\nu_{T^{-1}}(\om)}{d\nu(\om)}}
\end{eqnarray*}
for all $\psi\in\hil_\oplus$ and $\nu$-a.a.\ $\om\in\Om$, so that, when we denote
$$
Q^*(\om)\tilde B_T(\om)Q\big(T^{-1}(\om)\big)=B_T(\om)
$$
for $\nu$-a.a.\ $\om\in\Om$, we obtain
(\ref{covariantoperators1}).

One easily sees that $B^*\widehat{\chi_X}=\widehat{\chi_{T^{-1}(X)}}B^*$ for all $X\in\Sigma$ which, according to the first part of this proof, implies that there is a weakly measurable field $\om\mapsto B'(\om)$ of bounded operators, $B'(\om):\hil_{n(T(\om))}\to\hil_{n(\om)}$ with $\nu\text{-ess}\,\sup_{\om\in\Om}\|B'(\om)\|<\infty$ such that
$$
(B^*\psi)(\om)=B'(\om)\psi\big(T(\om)\big)\sqrt{\frac{d\nu_T(\om)}{d\nu(\om)}}
$$
for all $\psi\in\hil_\oplus$ and $\nu$-a.a.\ $\om\in\Om$. Comparing expressions
\begin{eqnarray*}
\sis{\f}{B\psi}&=&\int_\Om\sis{\f(\om)}{B_T(\om)\psi\big(T^{-1}(\om)\big)}\sqrt{\frac{d\nu_{T^{-1}}(\om)}{d\nu(\om)}}\,d\nu(\om)\\
&=&\int_\Om\sis{\f\big(T(\om)\big)}{B_T\big(T(\om)\big)\psi(\om)}\sqrt{\frac{d\nu_T(\om)}{d\nu(\om)}}\,d\nu(\om)
\end{eqnarray*}
and
$$
\sis{B^*\f}{\psi}=\int_\Om\sis{B'(\om)\f\big(T(\om)\big)}{\psi(\om)}\sqrt{\frac{d\nu_T(\om)}{d\nu(\om)}}\,d\nu(\om)
$$
for all $\f,\,\psi\in\hil_\oplus$ yields $B'(\om)=B_T\big(T(\om)\big)^*$ for $\nu$-a.a.\ $\om\in\Om$. This proves (\ref{covariantoperators2}). The proof of the converse claim is straightforward.

It is easy to check that, for an operator $B\in\mc L(\hil_\oplus)$ of (\ref{covariantoperators1}), $B^*B=I_{\hil_\oplus}$ if and only if $B_T\big(T(\om)\big)^*B_T\big(T(\om)\big)=I_{\hil_{n(\om)}}$ for $\nu$-a.a.\ $\om\in\Om$. Moreover, $BB^*=I_{\hil_\oplus}$ if and only if $B_T(\om)B_T(\om)^*=I_{\hil_{n(\om)}}$ for $\nu$-a.a.\ $\om\in\Om$. Hence, $B\in\mc L(\hil_\oplus)$ is unitary if and only if $B_T(\om)$ is unitary almost everywhere and hence the spaces $\hil_{n(\om)}$ and $\hil_{n(T(\om))}$ are isomorphic.
\end{proof}

\end{document}